\newtheorem{theorem}{Theorem}[section]
\newtheorem{corollary}[theorem]{Corollary}
\newtheorem{lemma}[theorem]{Lemma}
\newtheorem{proposition}[theorem]{Proposition}
\newtheorem{definition}[theorem]{Definition}
\newtheorem{remark}[theorem]{Remark}
\newtheorem{assumption}[theorem]{Assumption}
\newtheorem{convention}[theorem]{Convention}
\DeclareMathOperator{\Ent}{Ent}
\DeclareMathOperator{\Id}{Id}
\DeclareMathOperator{\Hess}{Hess}
\DeclareMathOperator{\id}{id}
\DeclareMathOperator{\cov}{cov}
\DeclareMathOperator{\Image}{Im}
\newcommand{\R}{\mathbb{R}}
\newcommand{\T}{\mathbb{T}}
\begin{document}

 \title[The quantitative hydrodynamic limit]{The quantitative hydrodynamic limit of the Kawasaki dynamics}

\author{Deniz Dizdar}
\address{Universit\'e de Montr\'eal}
\email{deniz.dizdar@umontreal.ca}

\author{Georg Menz}
\address{University of California, Los Angeles}
\email{gmenz@math.ucla.edu}

\author{Felix Otto}
\address{Max Planck Institute for Mathematics in the Sciences, Leipzig, Germany}
\email{Felix.Otto@mis.mpg.de.}

\author{Tianqi Wu}
\address{University of California, Los Angeles}
\email{timwu@ucla.edu}

\date{\today}

 \maketitle

 \begin{abstract}
  We derive for the first time in the literature a rate of convergence in the hydrodynamic limit of the Kawasaki dynamics for a one-dimensional lattice system. We use an adaptation of the two-scale approach. The main difference to the original two-scale approach is that the observables on the mesoscopic level are described by a projection onto splines of second order, and not by a projection onto piecewise constant functions. This allows us to use a more natural definition of the mesoscopic dynamics, which yields a better rate of convergence than the original two-scale approach. 
 \end{abstract}

 \begin{footnotesize}
\noindent\emph{MSC:} Primary 60K35; secondary 60J25; 82B21 \newline
\emph{Keywords:} two-scale approach; Logarithmic Sobolev inequality; spin system; Kawasaki dynamics; canonical ensemble; coarse-graining; splines; Galerkin approximation.
  \end{footnotesize}

\tableofcontents

\section{Introduction} \label{intro}
The broader scope of this work is the derivation of scaling limits for lattice systems. Typically, such a result consists in showing that under a suitable time-space rescaling a random evolution of a lattice system converges to a macroscopic evolution as the system size goes to infinity. One considers two different cases of limits. The hydrodynamic limit is a dynamical version of the law of large numbers. The limiting macroscopic evolution is deterministic and describes the typical macroscopic behavior of the system. In the fluctuation limit, the limiting macroscopic evolution is random and it describes the fluctuations around the hydrodynamic limit. \\

In this work, we are interested in the hydrodynamic limit of the Kawasaki dynamics of one-dimensional lattice systems of continuous, unbounded spins.  The Kawasaki dynamics is a spin-exchange dynamic preserving the mean spin. In the hydrodynamic limit one shows that it converges to a non-linear heat equation. The hydrodynamic limit is long known on a qualitative level. It was first deduced by Fritz~\cite{Fri87}. Then, Guo, Papanicolaou and Varadhan~\cite{GPV} deduced the hydrodynamic limit by introducing the martingale method. In~\cite{LY},  Yau introduced the entropy method, which is based on a sophisticated Gronwall-type estimate for a relative entropy functional. Yau's method is simpler and gives stronger results, but it makes stronger assumptions on the initial data (closeness to hydrodynamic behavior in the sense of relative entropy rather than in the sense of macroscopic observables). All those methods are qualitative and it's not obvious how to make them quantitative.\\

In this article, we further develop the quantitative theory on the hydrodynamic limit i.e.~establising rates of convergence in the hydrodynamic limit. A first step toward a quantitative theory was achieved in~\cite{GORV} by introducing the two-scale approach. For a detailed description of the two-scale approach we refer to Section~\ref{s_two_scale_approach}. In a nutshell, the two-scale approach introduces an additional mesoscopic scale in between the microscopic and the macroscopic scale. The hydrodynamic limit is then deduced in two steps. First showing the closeness of the microscopic and a carefully chosen mesoscopic dynamics and then the closeness of the mesoscopic and the macroscopic dynamics. However, in~\cite{GORV} the hydrodynamic limit is still deduced on a qualitative level. The main estimate establishing the closeness of the microscopic and the mesoscopic dynamics is already quantitative. The second estimate, showing the closeness of the mesoscopic and the macroscopic dynamics, is in principle just numerical analysis. With some work, one could make this estimate quantitative as well, overall deducing a quantitative result on the hydrodynamic limit.\\

In this article, we establish for the first time in the literature quantitative error estimates for the hydrodynamic limit. Instead of completing the approach of~\cite{GORV} we proceed differently. The reason is that when using the approach of~\cite{GORV}, the resulting error estimates woud be sub-optimal (for details see Remark~\ref{r_remark_choosing_L_2} and Remark~\ref{optimalsc} below). The sub-optimality comes from the fact that~\cite{GORV} uses a projection onto piecewise constant functions to define the mesoscopic scale. By lack of regularity, this choice forces one to use an unnatural definition of the mesoscopic dynamics. Overall, one has to use a mixed Galerkin procedure. In this work, we use a projection onto splines to define the mesoscopic observables. By this choice we are able to define the mesoscopic dynamics in a natural way as the Galerkin approximation of the macroscopic dynamics. This leads to better error estimates compared to~\cite{GORV}. However, because splines do not have a localized basis, deducing the ingredients of the two scale approach becomes much more subtle. To keep this article short, the verification of some of those ingredients is outsourced to the companion article~\cite{DMOW18b}. There, we deduce the strict convexity of the coarse-grained Hamiltonian, a uniform logarithmic Sobolev inequality and the convergence of the gradient of the free energies. In this article, we concentrate on showing the quantitative error bounds for the hydrodynamic limit.\\

The second motivation behind improving the estimates of~\cite{GORV} is to develop a quantitative theory of the fluctuation limit, which states that the fluctuations of the Kawasaki dynamics converge to the solution of a stochastic heat equation. As for the hydrodynamic limit, the fluctuation limit of the Kawasaki dynamic is well understood on a qualitative level (see for example~\cite{Spo86,Zhu90,ChYa92,DiGuPe17}), but there is no quantitative result. A possible line of attack would be to use the two-scale approach. The estimates of~\cite{GORV} for estimating the distance of the microscopic and mesoscopic dynamics are too weak when using the scaling of the fluctuation limit. Because our error terms scale better, our estimates are still meaningful under this scaling (cf.~Theorem~\ref{p_quantitative_micro_to_meso}). The authors will further investigate this direction in another work.\\

Another question that is asked in this setting is the convergence of the microscopic entropy to the hydrodynamic entropy, which is again well understood from a qualitative point of view (cf.~\cite{Kos01,Fa13}). With the provided tools, one could make the approach of Fathi~\cite{Fa13} quantitative. However, this direction still needs further investigation.

\section*{Notations and conventions}

\begin{itemize}
\item We use the letter~$C$ to denote a universal generic constant~$0< C< \infty$ that is independent of the dimension~$N$ of the underlying lattice.
\item We denote with~$a \lesssim b$ that $a \leq C b$.
\item We denote with~$a \cdot b$ and~$|\cdot|$ the standard Euclidean inner product and norm on~$\mathbb{R}^N$.
\item Let~$X$ be a Euclidean space and~$f: X \to \mathbb{R}$. Then we denote with~$\nabla f$ and~$\Hess f$ the gradient and Hessian inherited from the Euclidean structure of~$X$.
\item We use $dx$ as a shorthand for the Hausdorff or
Lebesgue measure of appropriate dimension.
\item $|\cdot|_{H^{1}}$ denotes the homogeneous~$H^1$ norm.
\item $\Phi(z) : = z \log z$.
\item $[M]:= \left\{1, \ldots, M \right\}$.
\item $L^2(\mathbb{T})$ denotes the $L^2$ functions on the torus $\mathbb{T} = [0, 1]$ with mean $0$.  
\end{itemize}

\section{Setting and main result: The hydrodynamic limit of the Kawasaki dynamics}\label{s_main_results}

We start with describing the Kawasaki dynamics on the microscopic lattice~$\left\{1, \ldots, N \right\}$. For this purpose, let us introduce the Hamiltonian~$H : \mathbb{R}^N \to \mathbb{R}$ of the system. It is given by
\begin{equation}
H_N(x) \,=\, \sum_{n=1}^N \psi(x_n). \label{hamiltonian}
\end{equation}
We assume that the function~$\psi: \mathbb{R} \to \mathbb{R}$ can be
written as 
\begin{equation}\label{e_structure_of_ss_potential}
 \psi(x_n)\,=\,  \frac{1}{2}\,x_n ^2 + a x_n + \delta\psi(x_n),
\end{equation}
where the function $\delta\psi \in C^2(\mathbb{R})$, $a$ is an arbitrary real number, and that there is a constant $C<\infty$ such that
\begin{equation}\label{e_assumptions_on_ss_perturbation}
\|\delta\psi\|_{L^{\infty}(\mathbb{R})} < C \:\: \text{and} \:\:\|\frac{d^2}{dx^2}\,
\delta\psi\|_{L^{\infty}(\mathbb{R})} < C.
\end{equation}
The function $\psi$ may be non-convex and it helps to think about the function $\psi$ as a double well-potential (see Figure~\ref{f_double_well}). \\

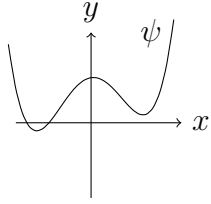
\begin{figure}[t]
  \centering
  \begin{tikzpicture}[scale=1]
      \draw[->] (-1,0) -- (1.2,0) node[right] {$x$};
      \draw[->] (0,-1) -- (0,1.2) node[above] {$y$};
      \draw[scale=0.5,domain=-2.2:2.2,smooth,variable=\x,black] plot
      ({\x},{0.3*(\x*\x-2)*(\x*\x-2)+0.15*\x});
      \draw (0.8,1.2) node {$\psi$};
    \end{tikzpicture}
  \caption{Double-well potential~$\psi$}\label{f_double_well}
\end{figure}

The Kawasaki dynamics~$X_t$ is given by the solution of the SDE
\begin{align}\label{e_Kawasaki_dynamics}
  dX_t = - A \nabla H (X_t) dt + \sqrt{2A} dB_t.
\end{align}
Here,~$B_t$ denotes a standard $N$-dimensional Brownian motion, ~$A$ denotes the second order difference operator of the periodic rescaled lattice~$\left\{ \frac{1}{N}, \ldots, 1  \right\}$. More precisely, the operator~$A$ is given by the $N\times N-$matrix
\begin{equation}\label{e_def_A}
A_{i,j} := N^2(-\delta_{i,j-1} + 2\delta_{i,j} - \delta_{i,j+1}),
\end{equation}
where we use the convention that~$0=N$. It follows from the structure of the operator~$A$ that the Kawasaki dynamics~~\eqref{e_Kawasaki_dynamics} conserves the mean spin of the system. Hence, we may restrict the state space~$\mathbb{R}^N$ of the Kawasaki dynamics~$X_t$ to the hyperplane
 \begin{equation}
    \label{e_definition_of_X_N_M}
  X_{N} := \left\{  x \in \mathbb{R}^N, \ \frac{1}{N} \sum_{i=1}^N x_i =m   \right\}.
 \end{equation}
We endow the space~$X_N$ with the Euclidean inner product
\begin{equation}
\langle x, y \rangle_{X_{N}} = x \cdot y =\underset{i = 1}{\stackrel{N}{\sum}} \hspace{1mm} x_i y_i.
\end{equation}

 \begin{assumption}\label{a_wlog_m_zero}
   By translating the single-site potential $\psi$ we may assume wlog.~that~$m=0$. Moreover, it follows from the structure of operator $A$ that the constant $a$ does not affect the dynamics, so that we may choose $a$ to additionally assume wlog.~that 
\begin{align}\label{e_wlog_mean_0}
\frac{\int_{\mathbb{R}}  z \exp (- \psi(z)) dz}{\int_{\mathbb{R}} \exp (- \psi(z)) dz } =0.\\
\end{align}
 \end{assumption}

The next lemma characterizes the law of the process~$X_t$ at time~$t$ via the Kolmogorov forward equation.

\begin{lemma}\label{p_fokker_planck_kawasaki}
Assume that the law of initial condition~$X_0$ is absolutely continuous wrt.~ the ~$N-1$ dimensional Hausdorff measure~$\mathcal{L}^{N-1}$. Let~$\mu$ denote the Gibbs measure on~$X_N$ associated to the Hamiltonian~$H$. More precisely, the measure~$\mu$ is absolutely continuous wrt.~the~$N-1$-dimensional Hausdorff measure~$\mathcal{L}^{N-1}$ and the Radon-Nikodym derivative of~$\mu$ is given by
\begin{align}\label{e_def_Gibbs_measure}
  \frac{d\mu}{d \mathcal{L}^{N-1}}(x) =  \frac{1}{Z} \exp \left( - H(x) \right) \qquad  x \in X_N.
\end{align}
Then for all times~$t>0$, the law~$p_t$ of the Kawasaki dynamics~$X_t$ given by~\eqref{e_Kawasaki_dynamics} is absolutely continuous wrt.~the Gibbs measure~$\mu$. Additionally, the relative density~$p(t) =f(t) \mu$ is a weak solution the Fokker Planck equation
\begin{equation} \label{micro_evolution}
\frac{\partial}{\partial t}(f\mu) = \nabla \cdot \left( A (\nabla f) \mu \right).
\end{equation}
This means that for any smooth test function $\xi$ it holds

$$\frac{d}{dt} \int{\xi(x)f(t,x)\mu(dx)} = -\int{\nabla \xi(x) \cdot A\nabla f(t,x) \mu(dx)}.$$
\end{lemma}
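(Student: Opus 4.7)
The plan is to combine parabolic regularity (to produce the density~$f$) with It\^o's formula and integration by parts (to produce the weak form of~\eqref{micro_evolution}). Observe first that the symmetric matrix $A$ has one-dimensional kernel spanned by $\mathbf{1}=(1,\dots,1)$ and is strictly positive on $\mathbf{1}^{\perp}$, which is the tangent space to the hyperplane $X_N$. Since $\mathbf{1}\cdot A=0$, both $A\nabla H$ and $\sqrt{2A}$ take values in~$\mathbf{1}^{\perp}$, so~\eqref{e_Kawasaki_dynamics} preserves~$X_N$; regarded as an SDE on the $(N-1)$-dimensional affine space~$X_N$, it has smooth drift and uniformly positive-definite diffusion matrix (the bounds~\eqref{e_assumptions_on_ss_perturbation} providing the needed regularity of~$\nabla H$).

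First I would establish existence of the density~$f$. The generator on~$X_N$,
\begin{equation}
  L\xi \,=\, -A\nabla H\cdot\nabla\xi + \nabla\cdot(A\nabla\xi),
\end{equation}
is uniformly elliptic with smooth coefficients, so standard parabolic theory applied to the associated Fokker--Planck equation on~$X_N$ shows that the transition kernel maps initial data absolutely continuous with respect to~$\mathcal{L}^{N-1}$ to smooth densities for all $t>0$. Since the Gibbs density $d\mu/d\mathcal{L}^{N-1}\propto\exp(-H)$ is smooth and strictly positive on~$X_N$, the law $p_t$ is then absolutely continuous with respect to~$\mu$ with a smooth density~$f(t,\cdot)$.

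Next, for a smooth test function $\xi$ on $X_N$, It\^o's formula applied to $\xi(X_t)$, with $d\langle X\rangle_t=2A\,dt$, yields after taking expectations
\begin{equation}
  \frac{d}{dt}\mathbb{E}[\xi(X_t)] \,=\, \mathbb{E}\bigl[-\nabla\xi(X_t)\cdot A\nabla H(X_t) + \nabla\cdot(A\nabla\xi)(X_t)\bigr].
\end{equation}
Rewriting $\mathbb{E}[\,\cdot\,]=\int\cdot\,f\,d\mu$ and integrating the trace term by parts on the closed hyperplane~$X_N$ (so no boundary contributions appear) produces a term of the shape $-\int A\nabla\xi\cdot\nabla\bigl(f\cdot d\mu/d\mathcal{L}^{N-1}\bigr)\,d\mathcal{L}^{N-1}$. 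Combining with the drift term and invoking the identity $\nabla\log(d\mu/d\mathcal{L}^{N-1})=-\nabla H$ collapses the sum to $-\int_{X_N}\nabla\xi\cdot A\nabla f\,d\mu$, which is exactly the claimed weak form of~\eqref{micro_evolution}.

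The main obstacle is the regularity step: on $\mathbb{R}^N$ the noise $\sqrt{2A}$ is degenerate, so the existence of a density requires working intrinsically on~$X_N$. The cleanest implementation is to fix an orthonormal basis of $\mathbf{1}^{\perp}$, rewrite the SDE in the resulting $(N-1)$-dimensional coordinates (where the diffusion matrix is uniformly positive definite and the drift inherits its smoothness from $\psi$), and invoke classical parabolic existence and regularity results for the Fokker--Planck equation; all subsequent manipulations are then routine in these coordinates.
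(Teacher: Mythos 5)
Your proposal is correct, and it fills in precisely the argument that the paper delegates to a citation of standard stochastic-analysis theory (the paper's entire ``proof'' is one sentence pointing to \cite{Pav14}). The key observations you make — that $A$ annihilates $\mathbf{1}$ so the SDE lives on $X_N$, that $A$ is uniformly elliptic when restricted to $\mathbf{1}^\perp$ so parabolic regularity yields a density, and that the Gibbs weight $e^{-H}$ makes the drift term cancel under integration by parts, leaving $-\int \nabla\xi\cdot A\nabla f\,d\mu$ — are exactly the steps implicit in the reference. Two small points worth tightening if you wrote this out in full: you should record that the drift $-A\nabla H$ has at most linear growth (since $\psi'' $ is bounded by \eqref{e_assumptions_on_ss_perturbation}) so the standard existence/uniqueness and Fokker--Planck theory applies without further ado, and in the intrinsic $(N-1)$-dimensional coordinates the relevant gradient in the statement is really the tangential gradient on $X_N$, which is harmless here because $A$ kills the normal component anyway.
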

The statement of the last lemma follows from standard theory of stochastic processes (see for example \cite{Pav14}).\\

The goal of this article is to derive quantitative bounds on the hydrodynamic limit of the Kawasaki dynamics~$X_t \in X_N$. Hydrodynamic limit means that as~$N \to \infty$ the dynamics~$X_t$ defined on the discrete space~$X_N$ converges to a dynamics~$\zeta (t) $ on the one-dimensional torus~$\mathbb{T}=[0,1]$.
To do this, we embed the spaces $X_N$ into the space~$L^{2}(\mathbb{T})$ by identifying the vector~$x \in X_N$ with its corresponding step function on the interval~$[0,1]$.
\begin{convention}
	Given~$x \in X_N$, we identify it with the step function
	\begin{equation} \label{def_step_functions}
	x(\theta) = x_j, \hspace{1cm} \theta \in \left[ \frac{j-1}{N}; \frac{j}{N} \right).
	\end{equation}
  Then the space $X_N$ is identified with the space of piecewise constant functions on $\mathbb{T} = \R/\mathbb{Z}$ with mean~$0$, i.e.
\begin{align}\label{e_embedding_X}
X_{N} = &\left\{x : \mathbb{T} \longrightarrow \R; \hspace{2mm} x \text{ is constant on } \right. \\
  & \qquad  \left. \left[\frac{j-1}{N}; \frac{j}{N} \right), \hspace{2mm} j = 1,..,N,  \mbox{ and} \quad \int_0^1 x(\theta) d\theta = 0 \right\}.
\end{align}
\end{convention}

It turns out the $L^2$ norm is not well-suited to describe the hydrodynamic limit since it is too sensitive to local fluctuations. Therefore we endow the space $X_{N}$ with the weaker homogeneous~$H^{-1}$-norm.
\begin{definition}[$H^{-1}$-norm]\label{d_h_neg_norm}
  If $f : \mathbb{T} \rightarrow \R$ is a locally integrable function with mean 0 then
\begin{equation}
||f||_{H^{-1}}^2 := \int_{\T}{w(\theta)^2d\theta}, \hspace{5mm} w' = f, \hspace{3mm} \int_{\T}{w(\theta)d\theta} = 0.
\end{equation}
\end{definition}

We now describe the limiting macroscopic dynamics $\zeta(t)$. 

\begin{definition}[Macroscopic free energy] Let the function $\varphi: \mathbb{R} \to \mathbb{R}$ be defined by
	\begin{equation} \label{average_hydro_potential}
	\varphi(m) = \underset{\sigma \in \R}{\sup} \hspace{1mm} \left\{ \sigma m - \log \int_{\R}{\exp \left(\sigma x - \psi(x) \right)dx} \right\}.
	\end{equation}
	The macroscopic free energy~$\mathcal{H}: L^2 (\mathbb{T}) \to \mathbb{R}$ is given by
	\begin{align}
	\label{e_macroscopic_free_energy}
	\mathcal{H} (\zeta) = \int_{\mathbb{T}} \varphi (\zeta) d \theta.
	\end{align}
	It follows directly that
	$\nabla \mathcal{H} (\zeta) = \varphi'( \zeta)$ for any~$\zeta \in L^{2}(\mathbb{T})$.  
\end{definition}

\begin{definition} [Macroscopic dynamics]  The macroscopic dynamics $\zeta(t)$ is the unique weak solution of the equation
	\begin{equation} \label{hydro_equn}
	\frac{\partial \zeta}{\partial t} = \frac{\partial^2}{\partial \theta^2} \nabla \mathcal{H} (\zeta) = \frac{\partial^2}{\partial \theta^2}\varphi'(\zeta)
	\end{equation}
	with initial condition $\zeta(0,\cdot) = \zeta_0$. We defer the precise formulation to Definition~\ref{d_weak_solution} below.
\end{definition}

Now, let us formulate the main result of this article. 

\begin{theorem}[Quantitative hydrodynamic limit for the Kawasaki dynamics] \label{thm_lim_hydro}
We assume that the single-site potential $\psi$ satisfies~\eqref{e_structure_of_ss_potential} and~\eqref{e_assumptions_on_ss_perturbation}. Let~$\mu$ denote the Gibbs measure given by~\eqref{e_def_Gibbs_measure} and let~$f(t)\mu$ denote the law of the Kawasaki dynamics~$X_t$ (cf.~Lemma~\ref{p_fokker_planck_kawasaki}). We assume that the initial law $f(0) \mu$ of~$X_0$ has bounded microscopic entropy in the sense that for some constant $C_{\Ent} > 0$
\begin{equation}\label{e_closness_initial_data}
 \Ent(f(0)\mu | \mu) := \int {\left( \frac{df(0)\mu}{d\mu} \right) \log \left( \frac{df(0)\mu}{d\mu} \right) d \mu} \leq C_{\Ent} N.
\end{equation}
 
Let $\zeta(t)$ be the deterministic dynamics described by equation \eqref{hydro_equn}. Then there is a constant~$0<C< \infty$ depending only on the constants appearing in~\eqref{e_assumptions_on_ss_perturbation} such that for any $T > 0$ 
\begin{align}
  \label{e_quantitative_estimate_1}
  & \sup_{0 \leq t \leq T} \int |x - \zeta|_{H^{-1}}^2  f\mu(dx)   \\
&   \leq C   \int |x - \zeta (0)|_{H^{-1}}^2 f(0) \mu (dx) + \frac{C}{N^{\frac{2}{3}}} \Big[   T +  C_{\Ent} +  |\zeta (0)|_{L^2}^2 +  1 \Big].
\end{align}
\end{theorem}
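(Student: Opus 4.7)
The strategy is the two-scale approach described in the introduction, using a mesoscopic scale $M$ with $1\ll M\ll N$ (to be optimized later), but with the mesoscopic observables defined via projection onto splines. Let $Y_M\subset L^2(\T)$ be the $M$-dimensional space of mean-zero second-order splines on the uniform grid of size $M$, and $P_M$ the $L^2$-orthogonal projection onto $Y_M$. Define the coarse-grained Hamiltonian $\bar H_M:Y_M\to\R$ by conditioning $H$ on the level sets of $P_M$, and the coarse-grained Gibbs measure $\bar\mu_M\propto\exp(-N\bar H_M)$. The companion paper \cite{DMOW18b} supplies three structural facts to be used as black boxes: uniform strict convexity of $\bar H_M$, a uniform logarithmic Sobolev inequality for the conditional measures $\mu(\,\cdot\,|\,P_M x=y)$, and a quantitative convergence $\nabla\bar H_M\to\nabla\mathcal{H}$.

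\textbf{Mesoscopic dynamics and decomposition.} Define the mesoscopic dynamics $\eta(t)\in Y_M$ as the Galerkin approximation of \eqref{hydro_equn}: $\eta(0)=P_M\zeta(0)$ and, for every test function $v\in Y_M$,
\begin{equation}
  (\partial_t\eta,v)_{L^2}+(\partial_\theta\varphi'(\eta),\partial_\theta v)_{L^2}=0.
\end{equation}
Apply the triangle inequality in $H^{-1}$,
\begin{equation}
  |x-\zeta(t)|_{H^{-1}}\le|x-P_Mx|_{H^{-1}}+|P_Mx-\eta(t)|_{H^{-1}}+|\eta(t)-\zeta(t)|_{H^{-1}},
\end{equation}
square, and integrate against $f(t)\mu(dx)$. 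The first term is a spline approximation error, bounded by $\lesssim M^{-2}\int|x|_{L^2}^2 f\mu\,dx$; the assumption \eqref{e_closness_initial_data} together with the Gaussian tails of $\mu$ implied by \eqref{e_structure_of_ss_potential}--\eqref{e_assumptions_on_ss_perturbation} gives $\int|x|_{L^2}^2 f\mu\,dx\lesssim C_{\Ent}+1$, so this piece contributes $\lesssim M^{-2}(C_{\Ent}+1)$. The third term is the standard error of a Galerkin scheme for \eqref{hydro_equn}, which by energy estimates in $H^{-1}$ and regularity transfer from $\zeta(0)$ is bounded by $\lesssim M^{-2}(T+|\zeta(0)|_{L^2}^2+1)$ uniformly in $t\in[0,T]$.

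\textbf{The core estimate.} The main step is the middle term. I would differentiate
\begin{equation}
  \frac{d}{dt}\int|P_Mx-\eta(t)|_{H^{-1}}^2 f(t)\mu(dx)
\end{equation}
using the Fokker--Planck equation \eqref{micro_evolution} and the defining equation of $\eta$. Integration by parts yields a (negative) dissipation term plus a cross term involving $\nabla\bar H_M(P_Mx)-\nabla\mathcal{H}(\eta(t))$. Splitting this as $[\nabla\bar H_M-\nabla\mathcal{H}](P_Mx)+[\nabla\mathcal{H}(P_Mx)-\nabla\mathcal{H}(\eta)]$, the first piece is controlled by the quantitative convergence of $\nabla\bar H_M$ imported from \cite{DMOW18b}, and the second is absorbed by the uniform convexity of $\mathcal{H}$. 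The logarithmic Sobolev inequality on the fibers, combined with the standard entropy-dissipation identity applied to $f\mu$, allows us to replace the microscopic drift $A\nabla H(x)$ by the mesoscopic one $A\nabla\bar H_M(P_Mx)$ at a cost of order $M/N$ times $C_{\Ent}+1$. A Gronwall argument on the resulting differential inequality then yields
\begin{equation}
  \int|P_Mx-\eta(t)|_{H^{-1}}^2 f\mu\,dx\lesssim \text{(initial error)}+\tfrac{M}{N}(T+C_{\Ent}+1)+M^{-2}(T+|\zeta(0)|_{L^2}^2+1).
\end{equation}

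\textbf{Optimization and main obstacle.} Summing the three contributions and balancing $M^{-2}\asymp M/N$ leads to the choice $M=N^{1/3}$, which yields the announced rate $N^{-2/3}[T+C_{\Ent}+|\zeta(0)|_{L^2}^2+1]$ and thus \eqref{e_quantitative_estimate_1}. The hardest step, and the reason for switching from piecewise constants to splines, is the $M/N$ control in the cross term: with piecewise constants the natural Galerkin dynamics is ill-defined due to a missing derivative, so \cite{GORV} is forced to use an unnatural mixed scheme with a suboptimal rate, while splines deliver exactly the regularity needed so that the Galerkin approximation $\eta$ tracks \eqref{hydro_equn} at the optimal order and the microscopic-to-mesoscopic Gronwall loop closes at the $N^{-2/3}$ scale. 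The price is that splines are not locally supported, which is what forces the companion paper \cite{DMOW18b} to revisit the strict convexity of $\bar H_M$, the uniform LSI, and the convergence of $\nabla\bar H_M$ in this more delicate setting.
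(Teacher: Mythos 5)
Your two-scale structure and the final balancing $M=N^{1/3}$ do match the paper, but your definition of the mesoscopic dynamics is different and this creates a genuine gap in the core estimate. You set $\eta$ to be the Galerkin approximation of the macroscopic PDE \eqref{hydro_equn}, driven by $\varphi'=\nabla\mathcal H$, whereas the paper uses $\frac{d}{dt}\eta=-\bar A\nabla\bar H(\eta)$ driven by the coarse-grained Hamiltonian $\bar H$. The difference bites when you differentiate $\int|P_Mx-\eta|^2 f\mu$: your split $[\nabla\bar H_M-\nabla\mathcal H](P_Mx)+[\nabla\mathcal H(P_Mx)-\nabla\mathcal H(\eta)]$ requires the quantitative convergence $|\nabla\bar H(Px)-\nabla\mathcal H(Px)|\lesssim (K^{-1}+M^{-1})|Px|_{H^1}+K^{-1}$ of Lemma~\ref{p_convergence_meso_to_macro_free_energy} to be applied at the \emph{random} point $P_Mx$. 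But there is no useful $H^1$ bound on $P_Mx$: Proposition~\ref{p_moment_estimate} only controls $|x|_{L^2}$, and the inverse Sobolev inequality $|P_Mx|_{H^1}\lesssim M|P_Mx|_{L^2}$ introduces a factor of $M$ that wipes out the $K^{-1}$ smallness (at $K=M^2$ this error is $O(1)$). The paper avoids this by keeping $\bar H$ on both sides of the cross term in the microscopic-to-mesoscopic step, so that only the strict convexity of $\bar H$ (Theorem~\ref{p_strict_convexity_coarse_grained_Hamiltonian}) is needed there, and applies the convergence $\nabla\bar H\to\varphi'$ only in the mesoscopic-to-macroscopic step, tested against the deterministic $\zeta$, where the energy estimate $\int_0^\infty|\zeta|^2_{H^1}dt\lesssim|\zeta(0)|^2_{L^2}$ of Lemma~\ref{p_aux_estimates_meso_to_macro} is available. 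Your proposal could be repaired by instead splitting as $[\nabla\bar H(Px)-\nabla\bar H(\eta)]+[\nabla\bar H(\eta)-\varphi'(\eta)]$ and deriving an $H^1$ energy estimate for your Galerkin $\eta$; that would essentially reproduce the paper's two steps inside a single Gronwall loop.

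A smaller inaccuracy: you attribute a cost of $(M/N)(C_{\Ent}+1)$ to the LSI/covariance step. The actual scaling is $M^{-2}(C_{\Ent}+1)$, coming from the fiberwise Poincar\'e inequality of Lemma~\ref{Poin} with constant $\gamma/M^2$; the rate $M/N=1/K$ is that of the noise term $T/K$ in Theorem~\ref{centralestimate}, not of the covariance. At the optimal choice $K=M^2$ the two rates coincide, which is why your balancing still lands on $M=N^{1/3}$, but the intermediate bound as stated has the wrong exponent and would not match the paper's Theorem~\ref{p_quantitative_micro_to_meso} for other block sizes.
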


The precise formulation of equation \eqref{hydro_equn} that describes the limiting macroscopic dynamics is given below.  

\begin{definition} \label{d_weak_solution}
	We call $\zeta(t,\theta)$ a weak solution of (\ref{hydro_equn}) on $[0,T] \times \T$ if
	\begin{equation}
	\zeta \in L^{\infty}_t(L^2_{\theta}), \hspace{5mm} \frac{\partial \zeta}{\partial t} \in L^2_t(H^{-1}_{\theta}), \hspace{5mm} \varphi'(\zeta) \in L^{\infty}_t(L^2_{\theta});
	\end{equation}
	and
	\begin{equation}\label{e_macro_weak_formulation}
	\left \langle \xi, \frac{\partial \zeta}{\partial t} \right\rangle_{H^{-1}} = - \left \langle \xi, \varphi'(\zeta) \right \rangle_{L^2} \hspace{5mm} \text{for all } \xi \in L^2, \hspace{3mm} \text{for a.e. } t \in [0,T].
	\end{equation}
	
	Here, $L^{\infty}_t(L^2_{\theta})$ (resp. $ L^2_t(H^{-1}_{\theta})$ is the set of functions $\zeta : [0,T] \times \T \longrightarrow \R$ such that $\int_{\T} \zeta(t,\theta) d\theta = 0$ and $||\zeta(t,\cdot)||_{L^2}$ (resp. $||\zeta(t,\cdot)||_{H^{-1}}$) is essentially bounded in $t$ (resp. in $L^2([0,T])$).
\end{definition}

The statement of Theorem~\eqref{thm_lim_hydro} is a quantitative version of the hydrodynamic limit. In~\cite{GORV}, only the error from comparing the microscopic scale to a mesoscopic scale was explicit. This error scaled in~\cite{GORV} like~$\frac{1}{\sqrt{N}}$. \\


\section{Proof of the~Theorem~\ref{thm_lim_hydro}: The two-scale approach}\label{s_two_scale_approach}

For deducing Theorem~\ref{thm_lim_hydro}, we will use the two-scale approach which was invented in~\cite{GORV}. The main idea in the two-scale approach is to introduce an intermediate dynamics on a mesoscopic scale between the microscopic dynamics~(\ref{e_Kawasaki_dynamics}) and the macroscopic dynamics~(\ref{hydro_equn}). The hydrodynamic limit is then deduced in two steps: In the first step, one deduces the convergence of the microscopic dynamics to the mesoscopic dynamics (see Theorem~\ref{p_quantitative_micro_to_meso} from below). In the second step, one deduces the convergence of the mesoscopic dynamics to the macroscopic dynamics (see Theorem~\ref{p_quantitative_meso_to_macro} from below).\\

The most important ingredient in the two-scale approach is the correct definition of the mesoscopic dynamics. The mesoscopic dynamics emerges from projecting the microscopic observables onto mesoscopic observables. The projection onto mesoscopic observables is done with the help of a coarse-graining operator~$P$. We recall that an element $x\in X_N$ is identified with a function on the torus $\mathbb{T} = \mathbb{R}/\mathbb{Z}$ that is piecewise constant with value $x_n$ on $[\frac{n-1}{N},\frac{n}{N})$, $n=1, \ldots N$ (cf.~\eqref{e_embedding_X}). The coarse-graining operator~$P$, that was used in~\cite{GORV}, can be interpreted as the projection of~$X_N$ in $L^2(\mathbb{T})$ onto the space of functions that are piecewise constant on the intervals $\left[ \frac{m-1}{M} , \frac{m}{M} \right)$, $m=1,...,M$. More precisely, this means that first one decomposes the lattice $\left\{ 1, \ldots, N \right\}$ into $M$-many blocks~$B(m)$ of size~$K$ i.e.~$N=MK$ and
\begin{align*}
  B(m)= \left\{ m(K-1)+1, \ldots, mK \right\} \qquad \mbox{for } 1 \leq m \leq M
\end{align*}
Then the operator~$P : X_N \to \mathbb{R}^M $ in~\cite{GORV} is given for~$x \in X_N $ by
\begin{align*}
  P(x) = \left( \frac{1}{K} \sum_{i \in B(1)} x_i, \ldots , \frac{1}{K} \sum_{i \in B(M)} x_i  \right).
\end{align*}

The main difference of this article compared to~\cite{GORV} is that instead the operator~$P$ is defined as the~$L^2$ projection onto splines of order 2 (see Definition~\ref{d_coarse_graining_operator_P} from below). Because spline functions of order 2 are~$C^1(\mathbb{T})$, the mesoscopic variables are more regular compared to~\cite{GORV}. This has two important advantages:
\begin{itemize}
\item In the first step of the two-scale approach, namely showing the convergence of the microscopic dynamics to the mesoscopic dynamics (see Theorem~\ref{p_quantitative_micro_to_meso} below), we get a better error estimate compared to~\cite[Theorem~8]{GORV}. \\[-0.5ex]

\item The second step of the two-scale approach, namely deducing the convergence of the mesoscopic dynamics to the macroscopic dynamics, becomes significantly easier (see Theorem~\ref{p_quantitative_meso_to_macro} from below). Instead of a mixed method we can apply a direct Galerkin approximation method.
\end{itemize}

However, there is a trade-off compared to the argument of~\cite{GORV}. For deducing the convergence of the microscopic dynamics to the mesoscopic dynamics (see Proposition~\ref{p_quantitative_micro_to_meso}) one needs certain ingredients, among them is a uniform logarithmic Sobolev inequality (LSI) and the strict convexity of the coarse-grained Hamiltonian. Deducing those ingredients becomes significantly more difficult compared to~\cite{GORV}. \\

We provide those ingredients and several other technical results in the companion article~\cite{DMOW18b}. The uniform LSI and the strict convexity of the coarse-grained Hamiltonian were originally provided in Deniz Dizdar's diploma thesis~\cite{Dizdar}. The main estimate to deduce the convergence of the microscopic dynamics to the mesoscopic dynamics (see Theorem~\ref{centralestimate} from below) also was deduced in Dizdar's diploma thesis.

\begin{remark} In the companion article ~\cite{DMOW18b}, we work with functions with unrestricted mean. In this present article, we work with functions with mean $0$. Although this causes slight difference in the definitions, the results there apply here with minimal changes. 
\end{remark}

Let us now turn to the definition of the mesoscopic dynamics.
\begin{definition}[Definition of the coarse-graining operator~$P$]\label{d_coarse_graining_operator_P}
  For $M \in \mathbb{N}$, let $Y=Y_M$ be the space of spline functions of degree~$2$ with mean 0 on the torus $\mathbb{T}=[0,1]$ corresponding to the mesh $\left\{\frac{m}{M}\right\}_{m \in [M]}$. That is
\begin{align*}
  Y_M := & \left\{y\in C^{1}(\mathbb{T})|\,\forall m\in [M]:   y|_{\left(\frac{m-1}{M},\frac{m}{M}\right)}\, \text{polynomial of degree $\leq$ 2}, \right. \\
  & \left. \mbox{ and} \int_0^1 y(\theta) d\theta = 0\right\}.
\end{align*}
We endow $Y_M$ with the inner product inherited from $L^2(\mathbb{T})$. We define the coarse graining operator $P: L^2(\mathbb{T})\rightarrow Y_M\subset L^{2}(\mathbb{T})$ as the $L^2$-orthogonal projection onto $Y_M \subset L^2(\mathbb{T})$. 

\end{definition}

From now on, we assume $N = KM$ for $K\in \mathbb{N}$.

\begin{definition}[Two notions of adjoints to the coarse-grain operator $P$]\label{d_adjoint_NPt} Restrict the coarse-grain operator $P$ to $P: X_N \rightarrow Y_M$. First, define map~$P^{t}: Y_M  \to X_N$ as the adjoint to $P$, when we endow $X_N$ with the Euclidean inner product (viewing $X_N$ as a subspace of $\mathbb{R}^N$), i.e. for all~$x \in X_N$, $y \in Y_M$ 
\begin{align*}
  \langle Px, y \rangle_{L^{2}} =  x \cdot P^{t} y.
\end{align*}
It follows that the map $NP^t:Y_M  \to X_N$ is the adjoint to $P$, when we endow $X_N$ with the $L^2$ inner product (viewing $X_N$ as a subspace of $L^2(\mathbb{T})$), i.e. for all~$x \in X_N$, $y \in Y_M$ 
\begin{align*}
\langle Px, y \rangle_{L^{2}} =  \langle x, NP^t y \rangle_{L^{2}}.
\end{align*}
From this it also follows the map $NP^t$ is the $L^2$-orthogonal projection of $Y_M$ onto $X_N$, explicitly given by
\begin{align}\label{e_adjoint_NP^t_explicit}
\left( NP^t y \right)_i = N \int_{\frac{i-1}{N}}^{\frac{i}{N}} y(\theta) d \theta \quad \mbox{for} \quad i \in \{1, 2, \cdots, N\}.
\end{align}
\end{definition}

We recall a lemma from \cite{DMOW18b} (cf. Lemma 3.12 in \cite{DMOW18b}).

\begin{lemma}\label{p_invertible_PNPt}
	It holds that
	\begin{align}\label{e_p_operator_estimate}
	\| PNP^t  - \id_{Y_M} \| = O\left(\frac{1}{K^2}\right).
	\end{align}
	In particular, if~$K = \frac{N}{M}$ is large enough, then $P NP^t: Y_M \rightarrow Y_M$ is invertible. 
\end{lemma}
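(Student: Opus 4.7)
The plan is to exploit two features: first, $P$ fixes every element of $Y_M$, so for $y\in Y_M$ we have the identity $PNP^ty-y=P(NP^ty-y)$; second, $NP^ty$ is by \eqref{e_adjoint_NP^t_explicit} the cell-wise $N$-mesh average of $y$, so the function $f:=NP^ty-y$ has vanishing mean on every cell $I_i=[(i-1)/N,i/N]$. Together these will let us gain one factor of $1/K$ from $NP^t-\id$ and a second factor of $1/K$ by a duality step that uses the zero-mean property of $f$.

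The central one-sided estimate I would prove first is
\begin{equation}\label{e_plan_key}
\|u-NP^tu\|_{L^2(\mathbb{T})}\le \frac{C}{K}\,\|u\|_{L^2(\mathbb{T})}\qquad\text{for every } u\in Y_M.
\end{equation}
Since $(NP^tu)_i$ is the mean of $u$ on $I_i$, Poincar\'e's inequality on each $N$-cell gives $\|u-NP^tu\|_{L^2(I_i)}\le \tfrac{1}{\pi N}\|u'\|_{L^2(I_i)}$, and after summing in $i$ I only need the spline inverse (Markov-type) inequality $\|u'\|_{L^2(\mathbb{T})}\le CM\,\|u\|_{L^2(\mathbb{T})}$ valid on the uniform mesh of size $1/M$ for mean-zero quadratic splines; this immediately produces \eqref{e_plan_key} with the rate $1/K=M/N$.

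With \eqref{e_plan_key} in hand the conclusion is short. Applied to $u=y$ it yields $\|f\|_{L^2}\le \frac{C}{K}\|y\|_{L^2}$. For the duality step I characterize $\|Pf\|_{L^2}$ as $\sup\{\langle f,z\rangle_{L^2}:z\in Y_M,\ \|z\|_{L^2}\le 1\}$. For such $z$, the function $NP^tz$ is piecewise constant on the $N$-mesh, and since $f$ has zero mean on every $I_i$ we get $\langle f,NP^tz\rangle_{L^2}=0$. Hence
\begin{equation}
|\langle f,z\rangle_{L^2}|=|\langle f,z-NP^tz\rangle_{L^2}|\le \|f\|_{L^2}\,\|z-NP^tz\|_{L^2}\le \frac{C}{K}\|f\|_{L^2}\|z\|_{L^2},
\end{equation}
the last step by \eqref{e_plan_key} applied to $u=z$. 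Taking the supremum gives $\|P(NP^ty-y)\|_{L^2}\le \frac{C}{K^2}\|y\|_{L^2}$, proving \eqref{e_p_operator_estimate}. The invertibility of $PNP^t:Y_M\to Y_M$ for large $K$ is then a Neumann-series consequence of $\|PNP^t-\id_{Y_M}\|<1$.

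The one genuinely nontrivial ingredient is the spline inverse inequality $\|u'\|_{L^2}\lesssim M\|u\|_{L^2}$ on $Y_M$ with a constant independent of $M$. Because the B-spline basis is globally coupled and not locally supported in the simple sense of piecewise constants, this estimate does not follow from cellwise Markov inequalities; it requires a well-conditioning statement about the quadratic B-spline basis on a uniform mesh (or an equivalent orthogonal decomposition). This is the step I would expect to be the main obstacle; I would simply cite the corresponding lemma from the companion paper \cite{DMOW18b}, where the spline machinery is developed.
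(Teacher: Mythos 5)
Your argument is correct, and it is complete modulo the inverse inequality $|u|_{H^1}\lesssim M\,|u|_{L^2}$ on $Y_M$, which is precisely what Lemma~\ref{p_inverse_sobolev} in this paper (citing Lemma~4.8 of \cite{DMOW18b}) provides, so the citation you flag at the end is available within the machinery the article already imports. I cannot compare directly with the companion paper's proof of Lemma~3.12, since the present paper only cites it, but your two-step scheme is a natural and efficient route: the first factor $1/K$ from the cell-wise Poincar\'e inequality plus the inverse estimate, and the crucial second factor $1/K$ from the duality step that exploits the fact that $f=NP^ty-y$ has vanishing mean on every $N$-cell, so that $\langle f, NP^tz\rangle_{L^2}=0$ and the test function $z$ can be replaced by $z-NP^tz$. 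Without that orthogonality observation one would only obtain $\|Pf\|_{L^2}\le\|f\|_{L^2}\lesssim K^{-1}\|y\|_{L^2}$, i.e.\ the suboptimal rate $O(1/K)$; the zero-mean structure is what produces the claimed $O(1/K^2)$. All the supporting facts you use are consistent with the paper's conventions: $P$ is the $L^2$-orthogonal projection onto $Y_M$ and hence fixes $Y_M$, $(NP^ty)_i$ is the $N$-cell average by \eqref{e_adjoint_NP^t_explicit}, each $N$-cell lies inside a single $M$-cell because $N=KM$, and the operator norm in \eqref{e_p_operator_estimate} is the $L^2\to L^2$ operator norm since $Y_M$ carries the $L^2$ inner product. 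The Neumann-series conclusion for invertibility when $K$ is large is standard.
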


From now on, we will assume $K$ is sufficiently large so that $P NP^t: Y_M \rightarrow Y_M$ is invertible. In particular, this means $P: X_N \rightarrow Y_M$ is onto and $NP^t: Y_M \rightarrow X_N$ is one-to-one. 

\begin{definition}[Coarse-grained Hamiltonian~$\bar H$.]
  The coarse grained Hamiltonian~$\bar H: Y_M \to \mathbb{R}$ is given by 
  \begin{align}\label{e_def_coarse_grained_Hmailtonian}
   \bar H(y) = - \frac{1}{N} \log \int_{\left\{x \in X_N  :  Px =y \right\}} \exp \left( - H(x) \right) \mathcal{L}^{N-M}(dx),
  \end{align}
where~$\mathcal{L}^{N-M}$ denotes the $N-M$-dimensional Hausdorff measure.
\end{definition}

\begin{definition}[Mesoscopic dynamics]
The mesoscopic dynamics~$\eta$ is given by a solution of the ordinary differential equation
  \begin{align}
    \label{e_def_mesoscopic_dynamics}
    \frac{d}{dt} \eta(t) = - \bar A  \nabla \bar H(\eta(t)),
  \end{align}
where 
\begin{align}
  \label{e_definition_bar_A}
  \bar A : = P A NP^t.
\end{align}
\end{definition}
\begin{remark}
  Since $A$ is positive definite on $X_N$, $P^t: Y_M \to X_N$ is one-to-one, and $P: X_N \to Y_M$ is the adjoint to $P^t$, we see that $\bar{A}$ is positive definite on $Y_M$. 
\end{remark}
\begin{remark}\label{r_remark_choosing_L_2}
 In this work we consider splines of order~$L=2$, because then the operator~$AP^t \bar A^{-1}$ is bounded (see Lemma~\ref{operator} below). If one chooses splines of lower order then the operator~$AP^t \bar A^{-1}$ is unbounded. In~\cite{GORV}, the coarse-graining operator~$P$ was defined as the~$L^2$-orthogonal projection onto piecewise constant functions i.e.~splines of order~0. In~\cite{GORV}, one worked around the problem that operator~$AP^t \bar A^{-1}$ is unbounded by using a less straight-forward definition of~$\bar A$ as $\bar A^{-1} :=P A^{-1}N P^t$. That choice lead to a sub-optimal error when comparing the microscopic to the mesoscopic evolution (see also Remark~\ref{optimalsc} below). Choosing~$L>2$ does not improve the error derived with our method further. 
\end{remark}

Now, we state the first ingredient of the two-scale approach.
  \begin{theorem}[Convergence of the microscopic to the mesoscopic dynamics]\label{p_quantitative_micro_to_meso}
Under the same assumption as in Theorem~\eqref{hydro_equn}, let~$f \mu$ denote the distribution of the Kawasaki dynamics~$X_t$ (cf.~Lemma~\ref{p_fokker_planck_kawasaki}) and let~$\eta$ denote the solution of the mesoscopic equation~\eqref{e_def_mesoscopic_dynamics}. Then
\begin{align}
 \sup_{0\leq t\leq T}  \int | x-\eta(t) |_{H^{-1}}^2 \,f (t,x) \mu(dx) & \lesssim    \int |Px - \eta(0)|_{H^{-1}}^2 f(0,x) \mu (dx) \\
& \ +  \frac{T}{K}   + \frac{1}{M^2}  \left(C_{\Ent}   +1  \right).  \label{e_micro_to_meso_sup}
\end{align}
where $C_{\Ent}$ is given by \eqref{e_closness_initial_data}.
  \end{theorem}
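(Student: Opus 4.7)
The plan is to adapt the two-scale Gronwall strategy of~\cite{GORV} to the spline-based coarse graining, and to control the unresolved high-frequency component by a direct spline-approximation estimate. The starting point is the triangle inequality
\begin{equation}
|x-\eta(t)|_{H^{-1}}^2 \;\lesssim\; |x-Px|_{H^{-1}}^2 + |Px-\eta(t)|_{H^{-1}}^2,
\end{equation}
which isolates a ``fine-scale'' piece, independent of $\eta$, from a mesoscopic piece. The fine-scale piece is handled by spline approximation: since $x-Px$ is $L^2$-orthogonal to $Y_M$ and $Y_M$ gives second-order approximation, duality yields $|x-Px|_{H^{-1}}^2\lesssim M^{-2}|x|_{L^2}^2$. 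The $L^2$-moments of $x$ are $O(1)$ under $\mu$ by the structural assumptions~\eqref{e_structure_of_ss_potential}--\eqref{e_assumptions_on_ss_perturbation} on $\psi$, and they transport to $f\mu$ at the cost of a correction proportional to $C_{\Ent}$ via the entropy inequality applied together with the uniform LSI from~\cite{DMOW18b}. This produces the $(C_{\Ent}+1)/M^2$ contribution.

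For the mesoscopic piece I would take the Lyapunov functional
\begin{equation}
\Phi(t) \;:=\; \int|Px-\eta(t)|_{H^{-1}}^2\, f(t,x)\,\mu(dx),
\end{equation}
and differentiate in time using Fokker--Planck~\eqref{micro_evolution} for $f\mu$ and the ODE~\eqref{e_def_mesoscopic_dynamics} for $\eta$. The identity $\bar A=PANP^t$ and the adjoint relation from Definition~\ref{d_adjoint_NPt} reorganize the drift contributions into a single cross term
\begin{equation}
-2\int \bigl\langle Px-\eta,\;PA\nabla H(x)-\bar A\nabla\bar H(\eta)\bigr\rangle_{H^{-1}}\,f(t,x)\,\mu(dx),
\end{equation}
modulo a diffusion contribution that integrates by parts into a nonpositive term. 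I would then split
\begin{equation}
PA\nabla H(x)-\bar A\nabla\bar H(\eta) \;=\; \bigl[PA\nabla H(x)-\bar A\nabla\bar H(Px)\bigr] + \bar A\bigl[\nabla\bar H(Px)-\nabla\bar H(\eta)\bigr],
\end{equation}
so that the second bracket, by the uniform strict convexity of $\bar H$ proved in~\cite{DMOW18b}, contributes a dissipative $-c\,\Phi$ to $\frac{d\Phi}{dt}$ (equivalence of norms on the finite-dimensional $Y_M$ translates the $L^2$-convexity into $H^{-1}$-dissipation).

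The crux is controlling the first bracket, the genuine coarse-graining error. Here the ``central estimate'' of~\cite{DMOW18b} (Theorem~\ref{centralestimate}) is decisive: it bounds the $L^2(f\mu)$-norm of $PA\nabla H(x)-\bar A\nabla\bar H(Px)$ by $O(1/\sqrt{K})$, uniformly in the mesoscopic profile, essentially through a conditional concentration argument built on the uniform LSI and the convergence of the gradient of the free energy at the block scale, both established in the companion paper. Cauchy--Schwarz and Young's inequality then yield
\begin{equation}
\Bigl|\int\!\bigl\langle Px-\eta,\,PA\nabla H(x)-\bar A\nabla\bar H(Px)\bigr\rangle_{H^{-1}}\,f\mu(dx)\Bigr|\;\leq\; \tfrac{c}{4}\,\Phi \;+\; C/K,
\end{equation}
with the first term absorbed by the dissipation of the second bracket. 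The resulting differential inequality $\frac{d\Phi}{dt}\leq -\tfrac{c}{2}\Phi + C/K$ and Gronwall give $\sup_{[0,T]}\Phi(t)\lesssim \Phi(0) + CT/K$, which, combined with the fine-scale bound, yields~\eqref{e_micro_to_meso_sup}. The main obstacle is precisely the central estimate: the nonlocality of the spline basis rules out the block-by-block decoupling of~\cite{GORV}, and it has to be replaced by a global argument that couples the uniform LSI, the strict convexity of $\bar H$, and the convergence of $\nabla\bar H$ toward its macroscopic counterpart -- each proved in~\cite{DMOW18b} by quite different techniques.
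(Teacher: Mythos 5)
Your triangle-inequality split and the treatment of the fine-scale piece $x-Px$ match the paper, which also appeals to the spline approximation $|x-Px|_{H^{-1}}\lesssim M^{-1}|x-Px|_{L^2}$ (Lemma~\ref{p_facts_spline_approximation}) and the moment bound $\int|x|^2f\mu\lesssim N(C_{\Ent}+1)$ (Proposition~\ref{p_moment_estimate}). However, your handling of the mesoscopic piece $Px-\eta$ contains a genuine gap.

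You attribute to ``Theorem~\ref{centralestimate} of~\cite{DMOW18b}'' an estimate of the form $\|PA\nabla H(x)-\bar A\nabla\bar H(Px)\|_{L^2(f\mu)}=O(K^{-1/2})$, uniformly in time, and then run a Gronwall argument. That estimate does not exist. Theorem~\ref{centralestimate} is proved in the present paper, not the companion, and it is itself the time-integrated Gronwall bound for $\int|Px-\eta|^2_{\bar A^{-1}}f\mu$ that your argument is trying to produce; it is not an input of the form you describe. More importantly, a uniform-in-time $L^2(f\mu)$ bound on the fluctuation $PA(\nabla H(x)-\mathbb{E}_\mu[\nabla H\,|\,Px])$ is not available: this quantity is a genuine microscopic fluctuation of order one. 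The paper sidesteps this by instead differentiating $\int\tfrac12|Px-\eta|^2_{\bar A^{-1}}f\mu$ directly and obtaining (Lemma~\ref{p_curcialestimate_crucial_formula}) a three-term formula in which (i) the diffusion term integrates by parts to give $\dim Y_M/N\sim K^{-1}$ (this, not the coarse-graining error, is the source of the $T/K$ term), (ii) the convexity of $\bar H$ gives $L^2$-dissipation, and (iii) the remaining term is a covariance $\cov_{\mu(dx|y)}(f,\nabla H)$ which, after Lemma~\ref{operator} on the operator $ANP^t\bar A^{-1}$, is controlled by the time-integrated Fisher information via the uniform LSI and the discrete Poincar\'e estimate of Lemma~\ref{Poin}, producing the additional $C_{\Ent}/M^2$ term. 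This ``entropy sacrifice'' is essential: the covariance is bounded only after time-integration against the entropy production, never pointwise in time. Your proposal misses this mechanism entirely.

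A minor further concern: you justify passing from $L^2$-convexity of $\bar H$ to $H^{-1}$-dissipation by ``equivalence of norms on the finite-dimensional $Y_M$.'' That justification is unsafe, since the constants in an $L^2$--$H^{-1}$ equivalence on $Y_M$ blow up like $M$ as $M\to\infty$. What actually holds uniformly is the one-sided Poincar\'e inequality $|\cdot|_{H^{-1}}\lesssim|\cdot|_{L^2}$, which is what the paper implicitly relies on when it keeps the $L^2$-integrated dissipation on the left-hand side of~\eqref{centest1} rather than converting it to an $H^{-1}$-pointwise decay.
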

We give the proof of Theorem~\ref{p_quantitative_micro_to_meso} in Section~\ref{s_micro_to_meso}. The error term~$\frac{T}{K} $ on the right hand side of~\eqref{e_micro_to_meso_sup} comes from the fact that we compare a stochastic microscopic dynamic to a deterministic mesoscopic dynamic. The scaling corresponds to what one would expect from the central limit theorem, if we had chosen $L = 0$. In that case, $y$ is a vector whose entries are means of $K$ weakly correlated random variables and $\eta$ is interpreted as the vector whose entries are expected values of these means. Now, let us state the second ingredient in the two-scale approach.

  \begin{theorem}[Convergence of the mesoscopic to the macroscopic dynamics]\label{p_quantitative_meso_to_macro}
Let~$\eta$ denote the solution of the mesoscopic dynamics~\eqref{e_def_mesoscopic_dynamics} and let~$\zeta$ denote the solution of the macroscopic dynamics~\eqref{hydro_equn}. Then 
    \begin{align}
      \label{e_quantitative_meso_to_macro}
       \sup_{0\leq t\leq T} & |\zeta(t) - \eta(t) |_{H^{-1}}^2 + \int_{0}^T |\zeta(s)- \eta(s)|_{L^2}^2 ds  \\
& \lesssim |\zeta(0) - \eta(0) |_{H^{-1}}^2 +  \frac{T}{K^2} +  \left(\frac{1}{K^2} + \frac{1}{M^2}\right) |\zeta(0)|_{L^2}^2 .
    \end{align}
  \end{theorem}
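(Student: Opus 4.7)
The mesoscopic dynamics~\eqref{e_def_mesoscopic_dynamics} should be viewed as a Galerkin-type approximation of~\eqref{hydro_equn} on the finite-dimensional spline space $Y_M$. Two ingredients from the companion article~\cite{DMOW18b} are essential for comparing the two dynamics. First, a \emph{spectral equivalence} estimate expressing that $\bar A = P A N P^t$ is a consistent discretization of $-\partial_\theta^2$ on $Y_M$; concretely, $\langle y, \bar A z\rangle_{H^{-1}} = \langle y, z\rangle_{L^2} + O(K^{-2})\|y\|_{L^2}\|z\|_{L^2}$ for $y, z \in Y_M$. Second, a \emph{gradient convergence} estimate of the form $\|\nabla \bar H(y) - \varphi'(y)\|_{L^2}\lesssim K^{-1}(1+\|y\|_{L^2})$. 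I would also use strict convexity $\varphi''\ge\lambda>0$ (a consequence of the structure~\eqref{e_structure_of_ss_potential} of $\psi$ via properties of the Legendre transform) together with standard spline approximation estimates of the form $\|\zeta - P\zeta\|_{H^{-1}} \lesssim M^{-1}\|\zeta\|_{L^2}$.

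Set $e(t) = \zeta(t)-\eta(t)$. Starting from the $H^{-1}$ energy identity
\begin{equation*}
\tfrac{1}{2}\tfrac{d}{dt}|e|_{H^{-1}}^2 \;=\; \langle e,\partial_t\zeta\rangle_{H^{-1}} \;-\; \langle e,\partial_t\eta\rangle_{H^{-1}},
\end{equation*}
the first summand equals $-\langle e,\varphi'(\zeta)\rangle_{L^2}$ by the weak formulation~\eqref{e_macro_weak_formulation}. For the second, split $e = (P\zeta-\eta) + (\zeta-P\zeta)$ so that $P\zeta-\eta \in Y_M$. Plugging $y = P\zeta-\eta$ as a test function into the mesoscopic ODE and applying the two companion-paper ingredients produces the main term $-\langle P\zeta-\eta,\varphi'(\eta)\rangle_{L^2}$ plus controllable errors. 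The orthogonal piece $\langle \zeta-P\zeta,\partial_t\eta\rangle_{H^{-1}}$ is handled by spline approximation combined with the natural a priori bound on $\int_0^T |\partial_t\eta|_{H^{-1}}^2\,dt$ coming from energy dissipation of the mesoscopic flow. After absorbing quadratic-in-$e$ errors by Young's inequality and invoking convexity, $\langle e,\varphi'(\zeta)-\varphi'(\eta)\rangle_{L^2} \geq \lambda|e|_{L^2}^2$, one arrives at
\begin{equation*}
\tfrac{1}{2}\tfrac{d}{dt}|e|_{H^{-1}}^2 + \tfrac{\lambda}{2}|e|_{L^2}^2 \;\leq\; R(t).
\end{equation*}

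The final step is to bound $\int_0^T R(t)\,dt$. The solution-independent contribution (coming from the $K^{-1}$ in gradient convergence, squared via Young) accumulates to $T/K^2$. The pieces of $R(t)$ proportional to $\|\zeta(t)\|_{L^2}^2$ or $\|\eta(t)\|_{L^2}^2$ are controlled uniformly in $t$ by $|\zeta(0)|_{L^2}^2$ via the non-increase of the $L^2$-norm along~\eqref{hydro_equn} (a direct consequence of $\varphi''\geq 0$) and an analogous monotonicity for the mesoscopic flow; multiplied by the factor $K^{-2} + M^{-2}$ coming from spectral equivalence and spline approximation, these yield the second error term in the theorem. The main obstacle, in my view, is precise bookkeeping of the three distinct error sources---spectral equivalence of $\bar A$ versus $-\partial_\theta^2$, gradient convergence $\nabla\bar H\leftrightarrow \varphi'$, and spline projection $\zeta\leftrightarrow P\zeta$---so that the scaling $T/K^2 + (K^{-2}+M^{-2})|\zeta(0)|_{L^2}^2$ emerges without any higher Sobolev norm of $\zeta$ leaking into the final bound. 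This constrains the spline approximation to be used only in pairings (e.g.\ $H^{-1}$ against $L^2$) that involve $\|\zeta\|_{L^2}$ alone.
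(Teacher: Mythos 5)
Your overall Galerkin-flavored strategy and the idea of measuring the error in the $\bar A^{-1}$-equivalent $H^{-1}$ norm match the paper, but there is a genuine gap in the endgame that prevents the claimed $T$-independent second error term from emerging.

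You explicitly aim to avoid "any higher Sobolev norm of $\zeta$ leaking into the final bound," and you close the estimate by bounding the $\zeta$-dependent error terms via $\sup_t\|\zeta(t)\|_{L^2}^2\lesssim\|\zeta(0)\|_{L^2}^2$. But if the error density $R(t)$ contains terms of size $(K^{-2}+M^{-2})\|\zeta(t)\|_{L^2}^2$, then integrating the differential inequality over $[0,T]$ necessarily produces $T\cdot(K^{-2}+M^{-2})\|\zeta(0)\|_{L^2}^2$ — an extra factor of $T$ that the theorem does not have. The paper's device is the opposite of yours: it deliberately lets the errors carry $|\zeta|_{H^1}$ and $|\varphi'(\zeta)|_{H^1}$, because those quantities are \emph{time-integrable}. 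The energy dissipation identity $\frac{d}{dt}\mathcal H(\zeta)=-|\varphi'(\zeta)|_{H^1}^2\leq 0$, combined with strict convexity of $\varphi$, gives $\int_0^\infty|\varphi'(\zeta)|_{H^1}^2\,dt\lesssim\|\zeta(0)\|_{L^2}^2$ and hence $\int_0^\infty|\zeta|_{H^1}^2\,dt\lesssim\|\zeta(0)\|_{L^2}^2$ (Lemma~\ref{p_aux_estimates_meso_to_macro}). After Young's inequality, the per-time error is $\lesssim K^{-2}+(K^{-2}+M^{-2})\big(|\zeta|_{H^1}^2+|\varphi'(\zeta)|_{H^1}^2\big)$, and integrating gives $T/K^2+(K^{-2}+M^{-2})\|\zeta(0)\|_{L^2}^2$ with no extra $T$. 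Relatedly, your stated form of the gradient-convergence estimate, $\|\nabla\bar H(y)-\varphi'(y)\|_{L^2}\lesssim K^{-1}(1+\|y\|_{L^2})$, is not what the companion paper provides; the actual estimate (Lemma~\ref{p_convergence_meso_to_macro_free_energy}) has the form $\big(K^{-1}+M^{-1}\big)|x|_{H^1}+K^{-1}$, and it is precisely the $|x|_{H^1}$ factor that feeds the time-integrable mechanism. Your spline-approximation pairing is also weakened: the sharp chain is $|\zeta-P\zeta|_{H^{-1}}\lesssim M^{-1}|\zeta-P\zeta|_{L^2}\lesssim M^{-2}|\zeta|_{H^1}$, and again it is the last member, not the middle one, that produces a $T$-independent contribution.

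One smaller structural difference: the paper computes $\frac{d}{dt}\frac12|\eta-P\zeta|^2_{\bar A^{-1}}$ directly, with both arguments in $Y_M$ so that the mesoscopic ODE and the operator $\bar A^{-1}$ can be used cleanly, and only adds $\zeta-P\zeta$ by triangle inequality at the end. Your plan to evolve $|\zeta-\eta|_{H^{-1}}^2$ and split inside the estimate is workable but awkward, since the cross term $\langle\zeta-P\zeta,\partial_t\eta\rangle_{H^{-1}}$ then requires an a priori bound on $\int_0^T|\partial_t\eta|_{H^{-1}}^2\,dt$ that the paper never needs. If you adopt the paper's split-first strategy and replace the $L^2$-norm budget by the time-integrated $H^1$-norm budget described above, the argument closes.
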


We prove Theorem~\ref{p_quantitative_meso_to_macro} in Section~\ref{s_meso_macro}. For the proof we adapt a standard method from numerical analysis. The mesoscopic evolution~\eqref{e_def_mesoscopic_dynamics} is interpreted as a Galerkin approximation of the macroscopic evolution~\eqref{hydro_equn}. The non-standard part of the argument is that when comparing~\eqref{e_def_mesoscopic_dynamics} to~\eqref{hydro_equn} one gets two additional error terms. One error term comes from approximating the Euclidean structure~$\langle \cdot, \cdot \rangle_{H^{-1}}$ by the Euclidean structure~$ \langle \cdot ,  \bar A^{-1} \cdot \rangle_{L^2}$ and the second error terms comes from approximating the gradient of the macroscopic free energy~$\mathcal{H}(\zeta):= \int_{\mathbb{T}} \varphi (\zeta (\theta)) d \theta$ by the gradient of the coarse-grained Hamiltonian~$\bar H$.\\

Beside Theorem~\ref{p_quantitative_micro_to_meso} and Theorem~\ref{p_quantitative_meso_to_macro}, the only ingredients of the proof of the main result (cf. Theorem \ref{thm_lim_hydro}) are some basic facts about splines.
\begin{lemma}\label{p_facts_spline_approximation}
 Let $P: L^{2}(\mathbb{T}) \to Y_M$ denote the $L^2$-orthogonal projection onto the spline space~$Y_M \subset L^2(\mathbb{T})$. It holds that for any function~$\zeta \in L^2$
 \begin{align}
   & |\zeta - P\zeta|_{H^{-1}}  \lesssim \frac{1}{M} |\zeta-P \zeta|_{L^2} \lesssim \frac{1}{M^2} |\zeta|_{H^1} , \qquad  \mbox{and}   \label{e_approximation_H_neg_1} \\
  & |P \zeta|_{H^1} \lesssim |\zeta|_{H^1}, \qquad |P \zeta|_{H^{-1}} \lesssim |\zeta|_{H^{-1}}.   \label{e_operator_norm_bounded_H_neg}
 \end{align}
As a result, we can extend $P$ to an operator $P: H^{-1}(\mathbb T) \rightarrow Y_M$ such that for any $\zeta \in H^{-1}, \xi \in Y_M$,
\begin{align}
\langle P\zeta, \xi \rangle_{L^2} = \langle \zeta, \xi \rangle_{L^2}.
\end{align} 
\end{lemma}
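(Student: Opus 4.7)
All three statements follow from a standard spline-approximation toolkit combined with the $L^{2}$-self-adjointness of~$P$ and an~$H^{-1}$--$H^{1}$ duality. The two inputs I plan to use are (a) the existence of a localized quasi-interpolation operator~$Q:L^{2}(\mathbb T)\to Y_{M}$ that reproduces~$Y_{M}$ exactly, is~$H^{1}$-stable, and satisfies the Bramble--Hilbert estimate~$|\zeta-Q\zeta|_{L^{2}}\lesssim \frac{1}{M}|\zeta|_{H^{1}}$; and (b) the inverse estimate~$|y|_{H^{1}}\lesssim M\,|y|_{L^{2}}$ for~$y\in Y_{M}$. Both (a) and (b) are obtained by Taylor expansion and scaling on a reference cell of size~$1/M$; the mean-zero periodic setting requires only a cosmetic fix, since~$P$ exactly preserves the mean-zero constraint, and for~$Q$ any stray mean can be subtracted at the end without degrading the estimates.

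For~\eqref{e_approximation_H_neg_1}, the~$L^{2}$-optimality of the projection gives~$|\zeta-P\zeta|_{L^{2}}\le |\zeta-Q\zeta|_{L^{2}}\lesssim\frac{1}{M}|\zeta|_{H^{1}}$, which is the second inequality. For the first inequality I exploit that~$\zeta-P\zeta$ is~$L^{2}$-orthogonal to~$Y_{M}$:
\begin{align}
|\zeta-P\zeta|_{H^{-1}}=\sup_{|g|_{H^{1}}\le 1,\ \bar g=0}\int_{\mathbb T}(\zeta-P\zeta)\,g\,d\theta=\sup_{g}\int_{\mathbb T}(\zeta-P\zeta)(g-Pg)\,d\theta,
\end{align}
and Cauchy--Schwarz together with the same~$L^{2}$-approximation bound applied to~$g$ produces the extra factor~$\frac{1}{M}$.

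For~\eqref{e_operator_norm_bounded_H_neg}, to show~$|P\zeta|_{H^{1}}\lesssim|\zeta|_{H^{1}}$ I split $P\zeta=Q\zeta+(P\zeta-Q\zeta)$, apply the inverse estimate to the spline difference, and invoke the~$L^{2}$-approximation for both~$P$ and~$Q$:
\begin{align}
|P\zeta-Q\zeta|_{H^{1}}\lesssim M\,|P\zeta-Q\zeta|_{L^{2}}\le M\bigl(|\zeta-P\zeta|_{L^{2}}+|\zeta-Q\zeta|_{L^{2}}\bigr)\lesssim|\zeta|_{H^{1}},
\end{align}
so that~$|P\zeta|_{H^{1}}\le|Q\zeta|_{H^{1}}+|P\zeta-Q\zeta|_{H^{1}}\lesssim|\zeta|_{H^{1}}$ by the~$H^{1}$-stability of~$Q$. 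The~$H^{-1}$ bound then follows by duality and $L^{2}$-self-adjointness of~$P$: for any mean-zero~$g$ with~$|g|_{H^{1}}\le 1$,
\begin{align}
\int_{\mathbb T}P\zeta\cdot g\,d\theta=\int_{\mathbb T}\zeta\cdot Pg\,d\theta\le|\zeta|_{H^{-1}}\,|Pg|_{H^{1}}\lesssim|\zeta|_{H^{-1}}.
\end{align}

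Finally, since mean-zero $L^{2}(\mathbb T)$ is dense in~$H^{-1}(\mathbb T)$, the~$H^{-1}$-stability just established permits a unique continuous extension~$P:H^{-1}\to Y_{M}$; the identity~$\langle P\zeta,\xi\rangle_{L^{2}}=\langle\zeta,\xi\rangle_{L^{2}}$ (with the right-hand side read as the~$H^{-1}$--$H^{1}$ duality pairing when~$\zeta$ only lies in~$H^{-1}$) passes to the limit by continuity, as~$\xi\in Y_{M}\subset H^{1}$. The only mildly technical point in the whole argument is the construction of a mean-zero, periodic quasi-interpolant~$Q$ with the stated reproduction and stability; this I expect to be the main, albeit entirely routine, obstacle.
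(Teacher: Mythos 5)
Your proof is correct and the duality arguments you give for the first estimate in \eqref{e_approximation_H_neg_1} and for the $H^{-1}$-stability in \eqref{e_operator_norm_bounded_H_neg} match exactly what the paper indicates ("the other two estimates [follow] from these two by simple duality arguments"). The paper outsources the remaining two ingredients, $|\zeta-P\zeta|_{L^2}\lesssim M^{-1}|\zeta|_{H^1}$ and $|P\zeta|_{H^1}\lesssim|\zeta|_{H^1}$, to the companion article \cite{DMOW18b}; your sketch via a quasi-interpolant $Q$ together with the inverse estimate is the standard way to obtain them and is consistent with that reference, so the proposal takes essentially the same route, merely filling in what the paper cites.
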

The second estimate of \eqref{e_approximation_H_neg_1} and the first estimate of \eqref{e_operator_norm_bounded_H_neg} were deduced in the companion article~\cite{DMOW18b}; the other two estimates from these two by simple duality arguments. We are now ready to give the proof of Theorem ~\ref{thm_lim_hydro}.

\begin{proof}[Proof of Theorem~\ref{thm_lim_hydro}]
We choose as the initial condition of the mesoscopic dynamics~$\eta$ given by~\eqref{e_def_mesoscopic_dynamics} the function~$\eta(t=0) = P \zeta(t=0)$. Applying the triangle inequality, Theorem~\ref{p_quantitative_micro_to_meso}, Theorem~\ref{p_quantitative_meso_to_macro},~$| P\zeta(0) |_{L^2}^2 \leq  | \zeta(0) |_{L^2}^2$ and~$N=KM$ yields the estimate
\begin{align}
&   \sup_{0\leq t\leq T}   \int | x- \zeta(t) |_{H^{-1}}^2 \,f (t,x) \mu(dx) \\
& \quad \leq \sup_{0\leq t\leq T}  \int 2| x- \eta(t) |_{H^{-1}}^2 \,f (t,x) \mu(dx)  + \sup_{0\leq t\leq T}  2| \eta(t)-\zeta(t) |_{H^{-1}}^2 \\
& \quad \lesssim    \int |Px - P\zeta(0)|_{H^{-1}}^2 f(0,x) \mu (dx)  + |\zeta(0) - P\zeta(0) |_{H^{-1}}^2 \\
&\qquad   +  \frac{T}{K} + \frac{T}{K^2} + \frac{1}{M^2}  (C_{\Ent} + 1) + \left(\frac{1}{K^2} + \frac{1}{M^2}\right) | \zeta(0) |_{L^2}^2.
\end{align}
Applying~\eqref{e_approximation_H_neg_1} and~\eqref{e_operator_norm_bounded_H_neg}, and choosing~$K=M^2$ yields the desired estimate~\eqref{e_quantitative_estimate_1}. 
\end{proof}

\section{Proof of Theorem~\ref{p_quantitative_micro_to_meso}: Convergence of microscopic dynamics to mesoscopic dynamics} \label{s_micro_to_meso}

The proof of Theorem~\ref{p_quantitative_micro_to_meso} is quite complex. Before proceeding to the rigorous argument let us give some heuristics. Theorem~\ref{p_quantitative_micro_to_meso} states that the stochastic microscopic evolution given by the Kawasaki dynamics (see~\eqref{e_Kawasaki_dynamics} ), i.e.
\begin{align}
  dX_t = - A \nabla H (X_t) dt + \sqrt{2A} dB_t,
\end{align}
is close in the~$H^{-1}-$norm to the mesoscopic deterministic dynamics given by~\eqref{e_def_mesoscopic_dynamics} i.e.
\begin{align}\label{e_def_mesoscopic_dynamics_2}
  \frac{d}{dt} \eta = - \bar A  \nabla \bar H(\eta).
\end{align}
The first observation needed is that because the~$H^{-1}-$norm is a weak norm (i.e.~it involves integration, see Definition~\ref{d_h_neg_norm}) one can control the error between~$X_t$ and the projected process~$PX_t$ (cf.~also Lemma~\ref{p_facts_spline_approximation}). Hence, it suffices to show that the stochastic  evolution
\begin{align}\label{e_projected_Kawasaki}
  dP X_t = - P A \nabla H (X_t) dt + P \sqrt{2A} dB_t
\end{align}
is close to the deterministic mesoscopic dynamics~\eqref{e_def_mesoscopic_dynamics_2}. Because the operator~$P$ takes averages over blocks of size~$K$, the noise term~$P \sqrt{2A} dB_t$ of the projected Kawasaki dynamics~\eqref{e_projected_Kawasaki} should vanish as~$K\to \infty$ by the law of large numbers. It is left to show that 
\begin{align}\label{e_projected_deterministic_Kawasaki}
  \frac{d}{dt} P X_t = - P A \nabla H (X_t) 
\end{align}
is close to the mesoscopic dynamics~\eqref{e_def_mesoscopic_dynamics_2}. By definitions and a short calculation one sees that the mesoscopic dynamics~\eqref{e_def_mesoscopic_dynamics_2} is given by
\begin{align}  \label{e_meso_written_out}
  \frac{d}{dt} \eta (t) = - P A\  \mathbb{E}_{\mu}\left[  \nabla H (x) \ | \ Px = \eta(t) \right],
\end{align}
where the expectation is taking with respect to the canonical ensemble~$\mu$ conditioned on the mesoscopic profile given by~$\eta(t)$. We observe that~$\mu$ is also the stationary distribution of the Kawasaki dynamics~\eqref{e_Kawasaki_dynamics} (see also Lemma~\ref{p_fokker_planck_kawasaki}). The process~$X_t$ equilibrates a lot faster on blocks of size~$K$ than in the whole system. Hence, we expect that the dynamics~\eqref{e_projected_deterministic_Kawasaki} and~\eqref{e_meso_written_out} are close if the blocks are a lot smaller compared to the overall system size~$N$, in other words~$\frac{K}{N} \to 0$. In the rigorous argument, this fact will be quantified with the help of a uniform LSI which characterizes the speed of the convergence to equilibrium (see Theorem~\ref{p_LSI_conditional} below).\\

Let us turn now to the rigorous proof of Theorem~\ref{p_quantitative_micro_to_meso}. The first ingredient of the proof is an estimate of the second moment of $X_t$ in $L^2$ norm, which controls the difference between~ $X_t$ and the projected dynamics ~$PX_t$ in $H^{-1}$ norm by Lemma~\ref{p_facts_spline_approximation}.
\begin{proposition}\label{p_moment_estimate}
	Let~$f(t)\mu$ denote the law of the Kawasaki dynamics~$X_t$ (cf.~Lemma~\ref{p_fokker_planck_kawasaki}). Then it holds that
	\begin{align}
	\label{e_moment_estiamte}
	\ \int |x|^2 \,f\, \mu (dx) & \lesssim NC_{\Ent} + \int |x|^2 \mu (dx) \lesssim N (C_{\Ent} + 1).
	\end{align}
\end{proposition}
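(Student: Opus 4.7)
The plan is to combine two classical ingredients: the monotonicity of the relative entropy $\Ent(f(t)\mu|\mu)$ along the Fokker--Planck evolution \eqref{micro_evolution}, and the Donsker--Varadhan variational representation of relative entropy, together with a Gaussian-type exponential moment bound for the reference measure~$\mu$.

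First, I would show that $\Ent(f(t)\mu|\mu)$ is non-increasing in $t$. Differentiating along \eqref{micro_evolution} and integrating by parts produces the dissipation identity
$$
\frac{d}{dt}\Ent(f(t)\mu|\mu)\;=\;-\int \nabla\log f \cdot A\, \nabla\log f\ f\, d\mu\;\leq\;0,
$$
since $A$ is positive semidefinite on $X_N$. Combined with the initial bound \eqref{e_closness_initial_data}, this yields $\Ent(f(t)\mu|\mu)\leq C_{\Ent}N$ uniformly in $t\geq 0$, which is exactly the form of the entropy control needed in what follows.

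Second, I would invoke the standard Legendre-type inequality
$$
c\int |x|^2 f\, d\mu \;\leq\; \Ent(f\mu|\mu)\,+\,\log\int e^{c|x|^2}\, d\mu
$$
for a fixed small absolute constant $c\in(0,1/2)$. To control the exponential moment on the right, I would use the structural hypothesis \eqref{e_structure_of_ss_potential}--\eqref{e_assumptions_on_ss_perturbation}: because $\sum_i x_i=0$ on $X_N$ and $\|\delta\psi\|_\infty\leq C$, the Hamiltonian satisfies $|H(x)-\tfrac{1}{2}|x|^2|\leq CN$ pointwise on $X_N$. Inserting this bound into both the numerator $\int_{X_N} e^{c|x|^2-H(x)}d\mathcal{L}^{N-1}$ and the partition function $Z=\int_{X_N}e^{-H(x)}d\mathcal{L}^{N-1}$ reduces the problem to estimating the ratio of two genuine Gaussian integrals on the $(N-1)$-dimensional hyperplane~$X_N$. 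A linear change of variables gives that this ratio equals $(1-2c)^{-(N-1)/2}$, whence $\log\int e^{c|x|^2}\, d\mu \lesssim N$. Combining yields $\int |x|^2 f\, d\mu\lesssim NC_{\Ent}+N$, which is the first inequality of \eqref{e_moment_estiamte}, and specializing to $f\equiv 1$ (or reading off the Gaussian computation directly) gives $\int |x|^2\, d\mu\lesssim N$, the second inequality.

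The only delicate point -- more a careful bookkeeping than a real obstacle -- is to verify that the Gaussian ratio on the $(N-1)$-dimensional hyperplane $X_N$ is indeed of order $e^{O(N)}$ uniformly in $N$, so that nothing blows up faster than $N$ in the constants. The entropy-exponential inequality itself is universal, the monotonicity computation along the Kawasaki flow is standard, and the quadratic lower bound on $\psi$ built into \eqref{e_structure_of_ss_potential} is precisely what is needed to make the Gaussian moment bound with $c<1/2$ kick in.
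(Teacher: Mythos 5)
Your proposal is correct and is essentially the same argument that the paper delegates to \cite{GORV} (Proposition 24 there): entropy monotonicity under the Kawasaki flow (the dissipation identity you write is exactly \eqref{e_time_derivative_entropy}, used later in the paper), the Gibbs/Donsker--Varadhan duality inequality for relative entropy, and the Gaussian exponential moment bound $\log\int e^{c|x|^2}\,d\mu\lesssim N$ coming from the perturbative structure \eqref{e_structure_of_ss_potential}--\eqref{e_assumptions_on_ss_perturbation} together with $\sum_i x_i=0$ on $X_N$ (which kills the linear term $a\sum x_i$). The only minor remark is that the Gaussian ratio computation should be stated on the affine hyperplane $X_N$ through the origin with the induced $(N-1)$-dimensional Lebesgue measure, as you do, and that $c<1/2$ is exactly what keeps $(1-2c)^{-(N-1)/2}$ of order $e^{O(N)}$.
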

For the proof of Proposition~\ref{p_moment_estimate} we refer to~\cite{GORV} (It is Proposition~24 in~\cite{GORV}.). We note that the second estimate in~\eqref{e_moment_estiamte} follows directly from definition~\eqref{e_def_Gibbs_measure} i.e.~$ \int |x|^2 \mu (dx) \lesssim  N$ (see also~(88) in~\cite{GORV}).\\

The next ingredient of the proof is the equivalence of $H^{-1}$ norm with a more naturally defined norm on $Y_M$ that comes from the inner product induced by the positive definite operator ~$\bar{A}^{-1}$, namely
\[
\langle y, z \rangle_{\bar{A}^{-1}} = \langle y , \bar{A}^{-1} z \rangle_{L^2} \quad \mbox{and} \quad |y|_{\bar{A}^{-1}} = \sqrt{\langle y , \bar{A}^{-1} y \rangle_{L^2}}.
\]

\begin{lemma} \label{p_equivalence_neg_H_norm_and_norm_generated_by_neg_bar_A} 
 There exists an integer $K^*$ such that for all $K \geq K^*, M$ and all~$y \in Y_M$, 
\begin{align}
  \label{e_equivalence_neg_H_norm_and_neg_bar_A}
  |y|_{\bar{A}^{-1}} \simeq |y|_{H^{-1}}.
\end{align}
\end{lemma}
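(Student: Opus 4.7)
The plan is to reduce the claimed norm equivalence to an equivalence of associated quadratic forms on $Y_M$, using that both $|\cdot|_{H^{-1}}^2$ and $|\cdot|_{\bar A^{-1}}^2$ can be written as Rayleigh-type suprema with the same class of test functions. By self-adjointness of $\bar A$ on $Y_M$ one has
\begin{equation*}
|y|_{\bar A^{-1}}^{2} \;=\; \sup_{0 \neq v \in Y_M} \frac{\langle y, v\rangle_{L^{2}}^{2}}{\langle v, \bar A v\rangle_{L^{2}}}.
\end{equation*}
On the $H^{-1}$-side, I would first show that for $y \in Y_M$,
\begin{equation*}
|y|_{H^{-1}}^{2} \;\simeq\; \sup_{0 \neq v \in Y_M} \frac{\langle y, v\rangle_{L^{2}}^{2}}{|v|_{H^{1}}^{2}}.
\end{equation*}
The nontrivial direction is $\lesssim$: starting from the duality characterisation $|y|_{H^{-1}} = \sup_w \langle y,w\rangle_{L^{2}}/|w|_{H^{1}}$ over mean-zero $w \in L^{2}$, the identity $\langle y,w\rangle_{L^{2}} = \langle y, Pw\rangle_{L^{2}}$ (valid since $y \in Y_{M}$) allows the replacement of the test function by $Pw \in Y_{M}$, after which the $H^{1}$-boundedness $|Pw|_{H^{1}} \lesssim |w|_{H^{1}}$ from Lemma~\ref{p_facts_spline_approximation} lets me change the denominator. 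Once both representations are in hand, the lemma follows from the equivalence of Dirichlet forms
\begin{equation}\label{e_dirichlet_equiv_plan}
\langle v, \bar A v\rangle_{L^{2}} \;\simeq\; |v|_{H^{1}}^{2} \qquad \text{for all } v \in Y_{M},
\end{equation}
which is the main content to prove.

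To establish \eqref{e_dirichlet_equiv_plan}, I would unfold definitions. Since $Pv = v$ for $v \in Y_{M}$ and $NP^{t}$ is the $L^{2}$-orthogonal projection onto $X_{N}$, and $Ax \in X_{N}$ for $x \in X_{N}$,
\begin{equation*}
\langle v, \bar A v\rangle_{L^{2}} \;=\; \langle ANP^{t}v, NP^{t}v\rangle_{L^{2}} \;=\; \tfrac{1}{N}\, x \cdot Ax \;=\; N\sum_{i=1}^{N}(x_{i+1}-x_{i})^{2},
\end{equation*}
where $x := NP^{t}v \in X_{N}$. Interpreting $(D_{+}x)_{i} := N(x_{i+1}-x_{i})$ as a discrete forward difference, this is $\int_{\T}(D_{+}x)^{2}\,d\theta$. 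The claim \eqref{e_dirichlet_equiv_plan} thus becomes an equivalence between the $L^{2}$-norm of the discrete derivative of the piecewise-constant average of $v$ and the continuous $L^{2}$-norm of $v'$.

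For the upper bound, write $(D_{+}x)_{i} = N^{2}\iint_{[0,1/N]^{2}} v'\bigl((i-1)/N + s + t\bigr)\,ds\,dt$; Jensen's inequality together with the fact that each $\theta \in \mathbb{T}$ is covered by $O(1)$ such double integrals yields $\int(D_{+}x)^{2}\,d\theta \lesssim |v'|_{L^{2}}^{2}$. The lower bound is the main obstacle, and where the hypothesis $K \geq K^{\ast}$ enters. Here I would exploit the spline structure: on any two adjacent sub-cells strictly inside a mesh cell $[(m-1)/M, m/M]$, $v$ is a single quadratic polynomial, and a direct computation shows the midpoint-type identity $(D_{+}x)_{i} = v'(i/N)$ \emph{exactly}. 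Summing these contributions yields a Riemann sum approximation of $\int v'^{2}$, whose error I would control by the inverse inequality $|v''|_{L^{2}} \lesssim M\,|v'|_{L^{2}}$ for splines. The contributions from the $O(M)$ sub-cells that straddle a knot are negligible (relatively of order $1/K$) by the same inverse inequality combined with $v \in C^{1}$. Taking $K^{\ast}$ large enough absorbs the error terms and delivers \eqref{e_dirichlet_equiv_plan} with constants that are independent of both $M$ and $K$.

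Combining the two Rayleigh representations with \eqref{e_dirichlet_equiv_plan} concludes the proof. The primary technical obstacle is the lower bound in \eqref{e_dirichlet_equiv_plan}, which is where the spline regularity of $Y_{M}$ (the exact midpoint identity on quadratic pieces) and the asymptotic regime $K \to \infty$ must be simultaneously exploited; the rest of the argument is soft duality and standard inverse estimates.
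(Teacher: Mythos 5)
Your argument follows essentially the same route as the paper: establish the Dirichlet-form equivalence $\langle v, \bar A v\rangle_{L^2} \simeq |v|_{H^1}^2$ on $Y_M$ (the paper's Corollary~\ref{p_closeness_H_1_and_A_norm_concrete_cor}, via the $O(1/K)$ error bound of Lemma~\ref{p_closeness_H_1_and_A_norm_concrete}, whose proof likewise compares the difference quotient $\partial_\theta^{1/N}NP^t v$ to $\partial_\theta v$), then transfer to the inverse forms by duality using $\langle y, w\rangle_{L^2} = \langle y, Pw\rangle_{L^2}$ and $|Pw|_{H^1}\lesssim |w|_{H^1}$. Your Rayleigh-quotient phrasing of the duality step and your exact quadratic midpoint identity are only cosmetic variations of what the paper does.
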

We will deduce Lemma~\ref{p_equivalence_neg_H_norm_and_norm_generated_by_neg_bar_A} in Section~\ref{s_spline_approximations}, where we gather and prove facts about splines. \\

The last (and main) ingredient for the proof is the following estimate which controls the difference between the projected microscopic dynamics $PX_t$ and the mesoscopic dynamics $Y_t$ in $\bar{A}^{-1}$ norm (hence in $H^{-1}$ norm by Lemma \ref{p_equivalence_neg_H_norm_and_norm_generated_by_neg_bar_A}).
\begin{theorem} \label{centralestimate} 
Under the same assumptions as in Theorem~\ref{p_quantitative_micro_to_meso}, there is an integer~$K^*$ and~$\lambda >0$ such that for all~$K \geq K^*$ and any finite time $T>0$ it holds
\begin{align}
\sup_{0\leq t\leq T} & \int \frac{1}{2} |P x-\eta|^2_{\bar{A}^{-1}} \,f\,
\mu(dx)  +  \lambda \int_0^T dt \int |Px-\eta|_{L^2}^2 \, f\, \mu(dx) \nonumber\\
&\leq \int  \,|P x-\eta(0)|^2_{\bar{A}^{-1}} \,f(0)\, \mu(dx)  + \frac{2T}{K} + 2C \ \frac{C_{\Ent}}{M^2} .\label{centest1}
\end{align}
\end{theorem}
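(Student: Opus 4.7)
The strategy is to compute $\frac{d}{dt}$ of the candidate Lyapunov functional
$$E(t) := \int \tfrac12 |Px-\eta(t)|^2_{\bar A^{-1}}\, f(t,x)\,\mu(dx)$$
and show that its time derivative decomposes into an explicit ``Itô'' constant of size $1/K$, a coercive dissipation $-\lambda \int |Px-\eta|^2_{L^2} f\mu$, and a fluctuation error that Young's inequality absorbs up to a time-integrable piece of size $C_{\Ent}/M^2$. Using that the test function $\xi(x,t) := \tfrac12|Px-\eta(t)|^2_{\bar A^{-1}}$ has Euclidean gradient $\nabla_x\xi = P^t \bar A^{-1}(Px-\eta)$ and Hessian $\Hess_x \xi = P^t \bar A^{-1} P$, I combine the weak form of Lemma~\ref{p_fokker_planck_kawasaki} with $\dot\eta = -\bar A\nabla\bar H(\eta)$ to obtain
\begin{align}
\frac{d}{dt} E = \mathrm{tr}\!\bigl(A P^t \bar A^{-1} P\bigr) + \int \bigl\langle \nabla\bar H(\eta) - \bar A^{-1}PA\nabla H(x),\, Px-\eta \bigr\rangle_{L^2} f\mu(dx).
\end{align}
Since $\bar A = PANP^t$, the trace reduces to $\dim(Y_M)/N \lesssim 1/K$, which contributes the $T/K$ term after time integration.

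To handle the second term, I disintegrate $f\mu$ along the fibers $\{Px=y\}$ as $f\mu = \bar f(y)\bar\mu(dy)\otimes f^*_y \mu^*_y$ and invoke the identity $\bar A \nabla\bar H(y) = PA\,\mathbb{E}_{\mu^*_y}[\nabla H]$, which follows by differentiating~\eqref{e_def_coarse_grained_Hmailtonian}. Adding and subtracting $\nabla\bar H(y)$ decomposes the integrand into a monotone piece $-\langle \nabla\bar H(y)-\nabla\bar H(\eta),\, y-\eta\rangle_{L^2}$, which by the $L^2$-strict convexity of the coarse-grained Hamiltonian (established in the companion paper~\cite{DMOW18b}) is bounded above by $-\lambda|y-\eta|^2_{L^2}$, producing the coercive dissipation; and a fluctuation error
\begin{align}
\mathrm{Err}(y) := \bigl\langle \bar A^{-1} PA(\mathbb{E}^*_y - \mathbb{E}_{\mu^*_y})[\nabla H],\, y-\eta\bigr\rangle_{L^2} = (\mathbb{E}^*_y - \mathbb{E}_{\mu^*_y})\bigl[\nabla H(x)\cdot w(y)\bigr],
\end{align}
where $w(y) := AP^t \bar A^{-1}(y-\eta) \in X_N$, and the second equality uses the Euclidean adjoint relation $\langle \bar A^{-1}PA v,\, y-\eta\rangle_{L^2} = v\cdot AP^t\bar A^{-1}(y-\eta)$.

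The hard part is controlling the $y$-integral of $\bar f(y)\,\mathrm{Err}(y)$ with the correct scaling. Because the Hessian $\Hess H = \mathrm{diag}(\psi''(x_n))$ is bounded by~\eqref{e_assumptions_on_ss_perturbation}, the function $x\mapsto\nabla H(x)\cdot w$ is Euclidean-Lipschitz with constant $\lesssim |w|_{\ell^2}$; the uniform-in-$N$ LSI for the conditional Gibbs $\mu^*_y$ from the companion paper then supplies the Lipschitz-Gauss bound $|\mathrm{Err}(y)|^2 \lesssim |w(y)|^2_{\ell^2}\, \Ent(f^*_y\mu^*_y \mid \mu^*_y)$. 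The spline-specific Lemma~\ref{operator} next controls $|w(y)|^2_{\ell^2}$ in terms of $|y-\eta|^2_{L^2}$ with the scaling proper to order-$2$ splines --- exactly the step that would fail for lower-order splines (cf.\ Remark~\ref{r_remark_choosing_L_2}). A Cauchy-Schwarz on the $\bar\mu$-integral over $y$, combined with the entropy-additivity $\int\bar f\,\Ent(f^*_y\mu^*_y\mid\mu^*_y)\,\bar\mu(dy)\le\Ent(f\mu|\mu)$ and the monotonicity $\Ent(f(t)\mu|\mu)\le\Ent(f(0)\mu|\mu)\le C_{\Ent}N$, allows me to apply Young's inequality at threshold $\lambda$ to absorb half of the convexity dissipation while leaving an error of order $C_{\Ent}/M^2$. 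Assembling the three contributions yields a pointwise-in-time differential inequality of the form $\tfrac{d}{dt}E + \tfrac{\lambda}{2}\int |Px-\eta|^2_{L^2}f\mu \lesssim \tfrac{1}{K}+\tfrac{C_{\Ent}}{M^2}$, and integration over $[0,T]$ produces~\eqref{centest1}. The sharp $1/M^2$ in the entropy-based error bound is the delicate point, and it relies crucially on both ingredients from the companion paper: uniform LSI for $\mu^*_y$ and $L^2$-boundedness of $AP^t\bar A^{-1}$.
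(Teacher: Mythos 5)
Your derivation of the time-derivative formula is correct and recovers the content of Lemma~\ref{p_curcialestimate_crucial_formula} by a slightly different route: you obtain the trace term $\mathrm{tr}(AP^t\bar A^{-1}P)=\dim Y_M/N$ directly from the generator, and you use the identity $\bar A\nabla\bar H(y)=PA\,\mathbb{E}_{\mu(dx|y)}[\nabla H]$ where the paper instead integrates by parts against $\bar\mu$ and uses Lemma~\ref{nablabar} (the same identity, stated for $\bar f$ rather than $\bar H$). Both give exactly the decomposition into the $\dim Y_M/N$ noise term, the convexity term, and the covariance term. The convexity step is also handled correctly.

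The gap is in your treatment of the covariance (fluctuation) term, and it is fatal to the stated scaling. You propose the Lipschitz--transport bound $|\mathrm{Err}(y)|^2\lesssim |w(y)|_{\ell^2}^2\,\Ent(f^*_y\mu^*_y\mid\mu^*_y)$, Cauchy--Schwarz in $y$, entropy additivity, and the pointwise-in-time bound $\Ent(f(t)\mu\mid\mu)\leq C_{\Ent}N$. Tracking the powers: Lemma~\ref{operator} gives $|w|_{\ell^2}^2\leq\frac{1}{N\sigma^2}|y-\eta|_{L^2}^2$ (the bound is $O(1)$ in $L^2$, with the $1/N$ coming purely from the $\ell^2$--$L^2$ conversion; there is no hidden $1/M^2$ here), and entropy additivity plus monotonicity gives $\int\bar f\,\Ent(f^*_y\mu^*_y\mid\mu^*_y)\,\bar\mu\leq C_{\Ent}N$. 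Cauchy--Schwarz then yields
\begin{equation}
\Big|\int\mathrm{Err}\,\bar f\,\bar\mu\Big|\lesssim\frac{1}{\sigma}\sqrt{C_{\Ent}}\Big(\int|y-\eta|_{L^2}^2\bar f\,\bar\mu\Big)^{1/2},
\end{equation}
and after Young's inequality at threshold $\lambda$ and time integration the remainder is of order $T\,C_{\Ent}$, not $C_{\Ent}/M^2$. Both of the crucial improvements are missing: the factor $1/M^2$ and the removal of the $T$ factor.

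The paper obtains both by a different mechanism. First, the covariance is controlled by the \emph{local Fisher information} $\int\frac{|\nabla_\parallel f|^2}{f}\mu(dx\mid y)$ (Lemma~\ref{covest}, a standard LSI--covariance estimate), not by a transport bound in entropy. Second, the key gain of $1/M^2$ comes from the discrete Poincar\'e-type inequality of Lemma~\ref{Poin}, $|x_\parallel|^2\leq\frac{\gamma}{M^2}\,x\cdot Ax$, which says that the fiber directions are high-frequency for $A$, so
\begin{equation}
\int\frac{|\nabla_\parallel f|^2}{f}\,\mu\leq\frac{\gamma}{M^2}\int\frac{\nabla f\cdot A\nabla f}{f}\,\mu.
\end{equation}
Third, and essential for avoiding the $T$ factor, the full Fisher information $\int\frac{\nabla f\cdot A\nabla f}{f}\mu$ is exactly $-\frac{d}{dt}\Ent(f(t)\mu\mid\mu)$ (equation~\eqref{e_time_derivative_entropy}); after Young's inequality one integrates this in time and obtains a \emph{total} contribution bounded by $\Ent(f(0)\mu\mid\mu)\leq C_{\Ent}N$, with no $T$ dependence. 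A pointwise-in-time entropy bound cannot reproduce this. So the missing ingredients in your argument are precisely Lemma~\ref{Poin} together with Lemma~\ref{covest}, and the observation that the relevant dissipation term is the time derivative of the relative entropy rather than the relative entropy itself.
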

 \begin{remark}
The estimate~\eqref{centest1} also shows that the projected Kawasaki dynamics~\eqref{e_projected_Kawasaki} is close to the mesoscopic dynamics~\eqref{e_def_mesoscopic_dynamics_2} using a time-integrated strong norm. This is reminiscent of the well-known phenomenon of parabolic improvement in numerical analysis. 
\end{remark}
\begin{remark} \label{r_explicit_constant}
  The universal constant~$0<C< \infty$ in Theorem~\ref{centralestimate} is given by $C = \displaystyle\frac{\kappa^2\gamma}{4\sigma^2\lambda\varrho^2}$, where the constants~$\kappa$,~$\lambda$,~$\gamma$, $\sigma$, and~$\varrho$ are given by: \newline  $\mbox{}  \ \cdot    \kappa:= \|\operatorname{Hess} H\|$, which is bounded independently of $N$ by the assumption~\eqref{hamiltonian},~\eqref{e_structure_of_ss_potential} and~\eqref{e_assumptions_on_ss_perturbation};\newline $\mbox{}  \ \cdot   2\lambda$ the lower bound on~$\Hess \bar{H}$ as in Theorem~\ref{p_strict_convexity_coarse_grained_Hamiltonian}; \newline
$\mbox{}  \ \cdot  \varrho$ is the constant of the logarithmic Sobolev inequality (LSI) from
Theorem~\ref{p_LSI_conditional}  from below;\newline
$\mbox{}  \ \cdot   \sigma$ is the constant from Lemma \ref{operator}; \newline
$\mbox{}  \ \cdot   \gamma$ the constant from Lemma \ref{Poin} below.
\end{remark}

\begin{remark} \label{optimalsc}
Theorem \ref{centralestimate} was first derived in Dizdar's diploma thesis~\cite{Dizdar}. Theorem \ref{centralestimate} should be compared with Theorem 8 in~\cite{GORV}. They arrive at a similar bound for the deviation from hydrodynamic behavior with additional term, scaling $M^{-1}$. As mentioned before this additional error term occurs due to their choice of the coarse-graining operator~$P$ as the projection onto piecewise constant functions and the different definition of~$\bar A$. 
\end{remark}

We will prove ~Theorem~\ref{centralestimate} in Section~\ref{s_central_estimate} and finish this section with a quick derivation of Theorem~\ref{p_quantitative_micro_to_meso} based on the ingredients above. 
\begin{proof}[Proof of Theorem~\ref{p_quantitative_micro_to_meso}] Using the triangle inequality,
\begin{align}
 \int  | x - \eta |_{H^{-1}}^2\,f \,\mu(dx)  
  \leq \int 2 | x - Px |_{H^{-1}}^2 \,f \, \mu(dx)  + \int 2 | Px - \eta |_{H^{-1}}^2 \,f\, \mu (dx) .
\end{align}
The first term on the right hand side is estimated by Lemma~\ref{p_facts_spline_approximation} and Proposition \ref{p_moment_estimate}. The second term on the right hand side is estimated by Theorem~\ref{centralestimate} and Lemma~\ref{p_equivalence_neg_H_norm_and_norm_generated_by_neg_bar_A}. This verifies the estimate~\eqref{e_micro_to_meso_sup}. 
\end{proof}

\subsection{Proof of Theorem~\ref{centralestimate}} \label{s_central_estimate}

The proof of Theorem~\ref{centralestimate} is based on several auxiliary statements. Those auxiliary statements will be deduced in Section~\ref{s_micro_to_meso_proof_aux_results}. In the proof of Theorem~\ref{centralestimate} we need to disintegrate the canonical ensemble~$\mu$ given by~\eqref{e_def_Gibbs_measure} into a conditional measure~$\mu (dx | P x=y)$ and the marginal~$\bar \mu (dy)$. 
\begin{definition}[Disintegration of the canonical ensemble~$\mu$]
  The coarse-graining operator~$P: L^2 \to Y_M$ introduces a decomposition of the canonical ensemble~$\mu$ into conditional measure~$\mu(dx| Px=y)$ and marginal measures~$\bar \mu(dy)$. More precisely the measures $\mu(dx| Px=y)$ and~$\bar \mu(dy)$ are defined by the relation
  \begin{align*}
    \int f (x) \mu (dx) = \int \int f(x) \mu(dx| Px=y) \bar \mu(dy) 
  \end{align*}
for any test function~$f$. This means that the conditional measure~$\mu(dx| Px=y)$ is a probability measure on the space
\begin{align*}
 \left\{ x \in X_N \ | \ Px=y\right\} \subset X_N
\end{align*}
that is absolutely continuous wrt.~the~$N-M$ dimensional Hausdorf measure~$\mathcal{L}^{N-M}$. Its Radon-Nikodym derivative is given by
\begin{align}
  \label{e_d_conditional_can_ensemble}
  \frac{ \mu(dx | Px =y )}{d \mathcal{L}^{N-M}} (x) = \frac{1}{Z} \ \mathds{1}_{\left\{ Px=y \right\}}(x) \ \exp (- H(x)  ).
\end{align}
For convenience, we also may write~$\mu(dx|y)$ instead of~$\mu(dx|Px=y)$.

The marginal measure~$\bar \mu$ is a probability measure on the space $Y_M$ that is absolutely continuous wrt.~the $M-1$-dimensional Hausdorf measure~$\mathcal{L}^{M-1}$. Its Radon-Nikodym derivative is given by
\begin{align}
  \label{e_d_marginal}
  \frac{d \bar \mu}{d \mathcal{L}^{M-1}} (y) = \frac{1}{Z}  \exp\left( -N \bar H(y)  \right),
\end{align}
where~$\bar H$ is the coarse-grained Hamiltonian given by~\eqref{e_def_coarse_grained_Hmailtonian}.
\end{definition}

Starting point of the proof of Theorem~\ref{centralestimate} is the following formula.
\begin{lemma}\label{p_curcialestimate_crucial_formula}
  For a function~$f: X_N \to \mathbb{R}$ and~$y \in Y_M$ let~$\bar f(y)$ denote
\begin{align}
  \label{e_d_bar_f}
  \bar f (y) = \int f(x) \mu(dx|y).
\end{align}
Then  holds that 
  \begin{align}
    \frac{d}{dt} \int  \frac{1}{2} | P x-\eta|^2_{\bar{A}^{-1}} f \mu (dx) &= \frac{\operatorname{dim}Y_M}{N}
         - \int \langle y-\eta, \nabla \bar{H}(y) - \nabla \bar{H}(\eta) \rangle_{L^2} \bar{f} \bar{\mu} (dy)\\
    &\quad -\int   AP^t\bar{A}^{-1} (y-\eta)  \cdot \operatorname{cov}_{\mu(dx|y)}(f,\nabla H) 
 \bar{\mu} (dy). \label{e_curcialestimate_crucial_formula}
  \end{align}
\end{lemma}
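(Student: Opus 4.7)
The plan is to differentiate $\int \xi_t(x) f(t,x)\mu(dx)$ where $\xi_t(x) := \tfrac{1}{2}|Px-\eta(t)|^{2}_{\bar A^{-1}}$. By the product rule and the weak Fokker-Planck equation of Lemma~\ref{p_fokker_planck_kawasaki} applied with the time-dependent test function $\xi_t$, we get
\begin{align}
\frac{d}{dt}\int \xi_t\, f\mu(dx) \;=\; \int \partial_t \xi_t\, f\mu(dx)\;-\;\int \nabla_x\xi_t\cdot A\nabla f\,\mu(dx).
\end{align}
The first term is straightforward: $\partial_t \xi_t(x) = -\langle \dot\eta, \bar A^{-1}(Px-\eta)\rangle_{L^2}$, and plugging in the mesoscopic equation~\eqref{e_def_mesoscopic_dynamics} together with self-adjointness of $\bar A^{-1}$ gives $\partial_t \xi_t(x)=\langle\nabla\bar H(\eta),\,Px-\eta\rangle_{L^2}$. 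For the second term, a direct computation of the Euclidean gradient yields $\nabla_x\xi_t = P^t\bar A^{-1}(Px-\eta)$, hence the integrand equals $V(x)\cdot \nabla f(x)$ with the vector field $V(x)=AP^t\bar A^{-1}(Px-\eta)$.

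Next I integrate $\int V\cdot\nabla f\,\mu(dx)$ by parts against $\mu\propto e^{-H}$ to obtain $-\int f(\nabla\cdot V)\mu(dx) + \int fV\cdot\nabla H\,\mu(dx)$. Since $V$ is affine in $x$, $\nabla\cdot V = \operatorname{tr}(AP^t\bar A^{-1}P)=\operatorname{tr}(\bar A^{-1}PAP^t)$. Using $\bar A = PANP^t$ (Definition~\eqref{e_definition_bar_A}) one has $PAP^t=\tfrac{1}{N}\bar A$, so the divergence collapses to the clean constant $\frac{\dim Y_M}{N}$; this produces the first right-hand term of~\eqref{e_curcialestimate_crucial_formula}. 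After these manipulations we have
\begin{align}
\frac{d}{dt}\int \xi_t f\mu(dx) = \frac{\dim Y_M}{N} + \int \langle\nabla\bar H(\eta), Px-\eta\rangle_{L^2} f\mu(dx) - \int V\cdot\nabla H\, f\mu(dx).
\end{align}

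Now I disintegrate $\mu(dx)=\mu(dx|y)\bar\mu(dy)$. The middle term immediately rewrites as $\int \langle\nabla\bar H(\eta),y-\eta\rangle_{L^2}\bar f(y)\bar\mu(dy)$. In the last term I split $f\nabla H = \bar f\nabla H + (f-\bar f)\nabla H$ inside the inner conditional integral. The piece involving $(f-\bar f)$ produces exactly $\operatorname{cov}_{\mu(dx|y)}(f,\nabla H)$ (because subtracting a $y$-measurable constant from $\nabla H$ does not change the cross-integral against $f-\bar f$), yielding the covariance term in~\eqref{e_curcialestimate_crucial_formula}. For the $\bar f$-piece I need the identity
\begin{align}\label{e_crucial_identity_plan}
\int \nabla H(x)\,\mu(dx|y) \;=\; NP^{t}\nabla\bar H(y),
\end{align}
which follows by differentiating $\exp(-N\bar H(y)) = \int_{\{Px=y\}} e^{-H}d\mathcal L^{N-M}$ in $y$ along a linear section $S:Y_M\to X_N$ with $PS=\operatorname{Id}_{Y_M}$, and observing via integration by parts along the fiber (a translate of $\ker P$) that $\int\nabla H\,\mu(dx|y)\in\operatorname{Range}(P^t)$, so the result is independent of $S$. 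Given~\eqref{e_crucial_identity_plan}, the $\bar f$-piece becomes $-\int \bar f(y)\,AP^t\bar A^{-1}(y-\eta)\cdot NP^t\nabla\bar H(y)\,\bar\mu(dy)$, and two adjoint moves (first $\langle\cdot,NP^t\cdot\rangle_{X_N}=N\langle P\cdot,\cdot\rangle_{L^2}$, then $PANP^t=\bar A$) simplify this to $-\int \bar f\,\langle y-\eta,\nabla\bar H(y)\rangle_{L^2}\,\bar\mu(dy)$. Combining with the $\nabla\bar H(\eta)$ piece assembles the telescoping difference $\nabla\bar H(y)-\nabla\bar H(\eta)$ claimed in~\eqref{e_curcialestimate_crucial_formula}.

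The main obstacle is the identity~\eqref{e_crucial_identity_plan}: it is the one place where the definition~\eqref{e_def_coarse_grained_Hmailtonian} of $\bar H$ interacts nontrivially with the geometry of the fibration $P:X_N\to Y_M$, and it requires both justifying differentiation under the fiber integral and verifying that the conditional mean of $\nabla H$ lies in $\operatorname{Range}(P^t)$ so that the seemingly section-dependent computation actually yields a canonical element of $Y_M$. All other steps are essentially bookkeeping with the adjoint relations and the key algebraic identity $PAP^t=\bar A/N$.
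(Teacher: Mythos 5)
Your proposal is correct, and it takes a genuinely different (though equivalent) route from the paper. The paper first proves a helper identity (its Lemma~\ref{nablabar}) $\int \nabla f\,\mu(dx|y) = P^t\nabla\bar f(y) + \operatorname{cov}_{\mu(dx|y)}(f,\nabla H)$ by integrating by parts along the fibers $\{Px=y\}$, and only then integrates by parts a second time over $Y_M$ against $\bar\mu\propto e^{-N\bar H}$; the $\frac{\dim Y_M}{N}$ term and the $\nabla\bar H(y)$ term both come out of that second, mesoscopic integration by parts. You instead integrate by parts once, directly on $X_N$ against $\mu\propto e^{-H}$, which produces $\nabla\cdot V$ (whose cyclic-trace evaluation $\operatorname{tr}(\bar A^{-1}PAP^t)=\dim Y_M/N$ is a slick way to get the dimension term) and a term $\int f\,V\cdot\nabla H\,\mu$, and then you disintegrate and split $f=\bar f+(f-\bar f)$ inside the fiber integral, relying on the identity $\int\nabla H\,\mu(dx|y)=NP^t\nabla\bar H(y)$ to recover $\nabla\bar H(y)$. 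Both arguments ultimately need the same ingredient — integration by parts along the fibers, which shows $\int\nabla_\parallel H\,\mu(dx|y)=0$ — but they package it differently: the paper wraps it into Lemma~\ref{nablabar} about a general $f$, while you use it only to show $\int\nabla H\,\mu(dx|y)\in\operatorname{Range}(NP^t)$, so that differentiating $e^{-N\bar H(y)}=\int_{\ker P}e^{-H(S(y)+u)}\,du$ along any linear section pins down the conditional mean as $NP^t\nabla\bar H(y)$. The two proofs have comparable technical overhead (differentiation under the fiber integral, tangency of $V$ to $X_N$, decay at infinity). Your version buys a cleaner, purely algebraic appearance of $\dim Y_M/N$ via the trace and avoids the second (mesoscopic) integration by parts, at the modest cost of isolating and justifying the identity $\int\nabla H\,\mu(dx|y)=NP^t\nabla\bar H(y)$, which in the paper is not stated explicitly but is implicitly present inside the proof of Lemma~\ref{nablabar}.
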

We will deduce Lemma~\ref{p_curcialestimate_crucial_formula} in Section~\ref{s_aux_results_micro_to_meso}. Let us have a closer look at the formula~\eqref{e_curcialestimate_crucial_formula}. The first term of the right hand side, since $N^{-1}M = K^{-1}$ and~$\dim Y_M=M-1$, has the scaling that could be expected from
the central limit theorem. Therefore this error term estimates the discrepancy that the Kawasaki dynamics~\eqref{e_Kawasaki_dynamics} has noise whereas the mesoscopic dynamics~\eqref{e_def_mesoscopic_dynamics} is deterministic.\\

Let us have a look at the two remaining terms on the right hand side of~\eqref{e_curcialestimate_crucial_formula}. The second term on the right hand side is a good term because of the uniform convexity
of $\bar{H}$.
\begin{theorem}[Strict convexity of~$\bar H$]\label{p_strict_convexity_coarse_grained_Hamiltonian}
There are constants~$0< \lambda, \Lambda, K^* < \infty$ such that for all~$K \geq K^*$,~$M$ and all~$y \in Y_M$ it holds
\begin{align*}
2 \lambda \Id_{Y_M} \leq  \Hess \bar H(y) \leq 2 \Lambda \Id_{Y_M}
\end{align*}
in the sense of quadratic forms.
\end{theorem}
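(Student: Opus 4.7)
The approach is to derive the standard variance--Hessian identity for $\operatorname{Hess}\bar{H}(y)$ by parametrizing the fiber $\{x\in X_N : Px=y\}$, and then treat the upper and lower bounds separately. Fix any right-inverse $S:Y_M\to X_N$ of $P$ (for instance $S := NP^{t}(PNP^{t})^{-1}$, well-defined by Lemma~\ref{p_invertible_PNPt} for $K\geq K^{*}$), parametrize $x = Sy + z$ with $z\in\ker P\cap X_N$, and differentiate the logarithm of the conditional partition function twice in $y$. Modulo a $y$-independent Jacobian, the usual cumulant-generating-function identity produces
\begin{align}
N\,\operatorname{Hess}\bar{H}(y) \;=\; S^{*}\,\mathbb{E}_{\mu(\cdot|y)}[\operatorname{Hess} H]\, S \;-\; S^{*}\,\operatorname{cov}_{\mu(\cdot|y)}(\nabla H)\, S,
\end{align}
where $S^{*}$ denotes the adjoint of $S$ taken with respect to Euclidean on $X_N$ and $L^{2}$ on $Y_M$, and where the conditional Gibbs measure $\mu(\cdot|y)$ lives on the affine fiber. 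The right-hand side is independent of the particular $S$, which is convenient.

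\textbf{Upper bound.} From $\psi''=1+\delta\psi''$ with $\|\delta\psi''\|_{\infty}\leq C$ we read off $\operatorname{Hess} H\leq(1+C)\operatorname{Id}_{N}$ on $X_N$, and since the covariance term is nonnegative, it suffices to bound $N^{-1}S^{*}S$ uniformly in the $L^{2}$ structure on $Y_M$. Because $NP^{t}$ is an $L^{2}$-orthogonal projection onto $X_N$, the step-function identification yields $PP^{t}\leq N^{-1}\operatorname{Id}_{Y_M}$, while Lemma~\ref{p_invertible_PNPt} gives $(PNP^{t})^{-1}=\operatorname{Id}+O(K^{-2})$. Combining, $N^{-1}S^{*}S\leq(1+O(K^{-2}))\operatorname{Id}_{Y_M}$, which delivers $\operatorname{Hess}\bar{H}(y)\leq 2\Lambda\operatorname{Id}_{Y_M}$ with $\Lambda$ close to $(1+C)/2$ once $K\geq K^{*}$.

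\textbf{Lower bound --- the main obstacle.} Here $\psi$ may be non-convex, so the pointwise estimate $\operatorname{Hess} H\geq(1-C)\operatorname{Id}$ can be useless and both terms in the formula must be balanced against each other. I would combine two ingredients. First, the uniform LSI of Theorem~\ref{p_LSI_conditional} gives a spectral gap $\varrho>0$ for $\mu(\cdot|y)$ uniform in $y$, $M$, $K$; applied to the linear observable $F(x)=(Sy')\cdot\nabla H(x)$, this controls the covariance by $S^{*}\operatorname{cov}_{\mu(\cdot|y)}(\nabla H)\,S\leq\varrho^{-1}(1+C)^{2}\,S^{*}S$. Second, for the expectation term one identifies the large-$K$ limit $\bar{H}(y)\to \mathcal{H}(y)=\int_{\mathbb{T}}\varphi(y(\theta))\,d\theta$, whose Hessian in $L^{2}$ is $\varphi''(y)\cdot\operatorname{Id}$ with $\inf\varphi''>0$ because $\varphi$ is the Legendre transform of the strictly convex cumulant $\sigma\mapsto\log\int\exp(\sigma x-\psi(x))\,dx$. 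The genuinely hard step is turning this limiting strict convexity into a quantitative lower bound on $S^{*}\mathbb{E}_{\mu(\cdot|y)}[\operatorname{Hess} H]\,S$ that dominates the covariance error \emph{uniformly} in $M$ --- a Helffer/Brascamp--Lieb-type analysis on the $K$-dimensional fiber, in which the $C^{1}$-regularity of degree-$2$ splines is essential to control boundary-type terms. The complete execution of this step is carried out in the companion paper~\cite{DMOW18b}.
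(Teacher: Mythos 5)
The paper does not actually prove this theorem: it is deferred to the companion article (Theorem~1.6 in~\cite{DMOW18b}), with only the citation appearing here, so there is no in-paper proof to check your proposal against.

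That said, your outline is not itself a proof, and the gap in the lower bound is concrete. The upper bound is fine modulo the variance--Hessian identity: with $S = NP^t(PNP^t)^{-1}$ one gets $N^{-1}S^{*}S = (PNP^t)^{-1} = \Id_{Y_M} + O(K^{-2})$ by Lemma~\ref{p_invertible_PNPt}, and discarding the nonnegative covariance term finishes. For the lower bound, however, applying the spectral gap to $F(x) = (Sy')\cdot\nabla H(x)$ only yields $\var_{\mu(\cdot|y)}(F) \leq \varrho^{-1}(1+C)^{2}|Sy'|^{2}$, and after dividing by $N$ this bounds the covariance contribution to $\Hess\bar{H}$ by an $O(1)$ constant of size $\varrho^{-1}(1+C)^{2}$. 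Since the expectation contribution is itself only $O(1)$ (indeed $\varrho^{-1}(1+C)^{2}$ will typically exceed it), the subtraction yields nothing: you cannot conclude positivity by appealing to ``limiting strict convexity'' of $\varphi$. What is needed is a covariance bound that actually \emph{vanishes} as $K\to\infty$, exploiting that $S^{*}\nabla H$ averages over blocks of size $K$; the crude LSI estimate you invoke does not see this cancellation, and producing it is precisely the hard step, which you ultimately acknowledge by deferring to~\cite{DMOW18b}. A further caution: Theorem~\ref{p_LSI_conditional} is itself one of the companion-paper results, so if you invoke it inside this proof you must first confirm that its proof in~\cite{DMOW18b} does not rely on the convexity of $\bar H$, lest the argument be circular.
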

The proof of Theorem~\ref{p_strict_convexity_coarse_grained_Hamiltonian} is quite complex. It is deduced in the companion article (see Theorem~1.6 in~\cite{DMOW18b}). With the help of Theorem~\ref{p_strict_convexity_coarse_grained_Hamiltonian}, the following estimate of the second term on the right hand side of~\eqref{e_curcialestimate_crucial_formula} is immediate.
\begin{corollary}\label{p_centralestimate_convexity_applied}
  It holds that
\begin{equation}
\int \langle y-\eta, \nabla \bar{H}(y) - \nabla \bar{H}(\eta) \rangle_{L^2} \bar{f}\, \bar{\mu} (dy)
\geq 2\lambda\int  |y-\eta |_{L^2}^2 \bar{f}\, \bar{\mu} (dy). \label{e_centralestimate_convexity_applied}
\end{equation}
\end{corollary}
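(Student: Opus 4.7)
The plan is to derive the inequality pointwise in $y$ (for each fixed $\eta$) from the Hessian lower bound in Theorem~\ref{p_strict_convexity_coarse_grained_Hamiltonian}, and then integrate against the nonnegative measure $\bar f\,\bar\mu(dy)$. The work is essentially just the standard fact that a $C^{1,1}$ function whose Hessian is bounded below by $2\lambda\,\Id$ is $2\lambda$-strongly convex, translated into the inner-product $\langle\cdot,\cdot\rangle_{L^2}$ on $Y_M$ that underlies the gradient and Hessian appearing in the statement.

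Concretely, fix $y,\eta\in Y_M$ and consider the smooth path $\gamma(s)=\eta+s(y-\eta)$, $s\in[0,1]$. Since $\bar H\in C^2(Y_M)$ (indeed, $\bar H$ is defined by marginalization of the smooth $H$, and the companion paper~\cite{DMOW18b} establishes the Hessian bounds quoted in Theorem~\ref{p_strict_convexity_coarse_grained_Hamiltonian}), the fundamental theorem of calculus applied to $s\mapsto \nabla \bar H(\gamma(s))$ gives
\begin{equation}
\nabla \bar H(y)-\nabla \bar H(\eta)=\int_0^1 \Hess \bar H(\gamma(s))\,(y-\eta)\,ds
\end{equation}
as an identity in $Y_M$. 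Taking the $L^2$-inner product with $y-\eta$ and using the lower bound $\Hess \bar H\geq 2\lambda\,\Id_{Y_M}$ from Theorem~\ref{p_strict_convexity_coarse_grained_Hamiltonian} inside the integral yields
\begin{equation}
\langle y-\eta,\nabla \bar H(y)-\nabla \bar H(\eta)\rangle_{L^2}\geq \int_0^1 2\lambda\,|y-\eta|_{L^2}^2\,ds=2\lambda\,|y-\eta|_{L^2}^2.
\end{equation}

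This is a pointwise inequality valid for every $y\in Y_M$ (with $\eta$ fixed). Multiplying by the nonnegative density $\bar f(y)$ and integrating against $\bar\mu$ produces the claimed inequality~\eqref{e_centralestimate_convexity_applied}. No obstacle is expected: all ingredients, in particular the convexity estimate on $\bar H$, are already at our disposal via Theorem~\ref{p_strict_convexity_coarse_grained_Hamiltonian}, and the proof is a one-line application of it.
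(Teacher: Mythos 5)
Your proof is correct and is exactly the argument the paper has in mind: the paper simply declares the corollary ``immediate'' from the Hessian lower bound in Theorem~\ref{p_strict_convexity_coarse_grained_Hamiltonian}, and your fundamental-theorem-of-calculus derivation of gradient monotonicity followed by integration against $\bar f\,\bar\mu$ is the standard way to make that immediacy precise.
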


Let us turn to the estimation of the third term on the right hand side of~\eqref{e_curcialestimate_crucial_formula}. We have to
deal both with the operator~$AP^t\bar{A}^{-1}$ and with the covariance~$\operatorname{cov}_{\mu(dx|y)}(f,\nabla H) $. The operator~$AP^t\bar{A}^{-1}$ measures the non-commutativity of projecting and taking second differences. It would favor macroscopic
description through (non-local) low-frequency Fourier modes. It is easy to check that its operator norm blows up as
$N\rightarrow \infty$ if one projects on piecewise constant or piecewise linear functions (i.e.~projection on splines of zero or first order). However, we do get a good control if we project on splines of second order: 

\begin{lemma} \label{operator}
There exists a universal constant $\sigma > 0$ and an integer $K^*$ such that for all $K \geq K^*, M$ and all $y\in Y_M$ it holds
\begin{align}
   |ANP^t \bar A^{-1} y  |_{L^2} \leq  \frac{1}{\sigma} |y |_{L^2} \label{opestimate}.
\end{align}
\end{lemma}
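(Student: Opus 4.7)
The plan is to show that on $Y_M$ the operator $ANP^t$ is a small $L^2$-perturbation of $-\partial_\theta^2$, and hence $\bar A = PANP^t$ is a small perturbation of the ``spline Laplacian'' $-P\partial_\theta^2$. The latter is coercive in the sense that $|Pz''|_{L^2} \gtrsim |z''|_{L^2}$ uniformly in $M$, because the $L^2$ pairing between the two $(M-1)$-dimensional spaces $Y_M$ and $S_0 := \{\text{mean-zero, mesoscopically piecewise-constant functions on }\mathbb{T}\}$ is uniformly nondegenerate. A perturbation argument for $K$ large then yields $|ANP^t z|_{L^2} \lesssim |\bar A z|_{L^2}$, which is the content of the lemma after setting $y = \bar A z$.

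\emph{Step 1 (consistency).} For every $z \in Y_M$, I would establish
\begin{equation}
|ANP^t z + z''|_{L^2} \lesssim K^{-1/2}\,|z''|_{L^2}.
\end{equation}
Using the explicit formula~\eqref{e_adjoint_NP^t_explicit} for $NP^t$ together with the second-order Taylor identity $z(\theta+h) - 2z(\theta) + z(\theta-h) = \int_{-h}^{h}(h - |s|)\,z''(\theta+s)\,ds$ (valid since $z \in C^1$ and $z''$ is bounded), each entry $(ANP^t z)_i$ becomes a weighted average of $z''$ over the $2/N$-neighborhood of the $i$-th fine cell. Whenever that neighborhood is contained in a single mesoscopic cell $J_m$, $z''$ is constant there and the computation returns $(ANP^t z)_i = -z''|_{J_m}$ exactly. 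Thus the only error comes from the $O(M)$ fine cells lying within $1/N$ of a mesoscopic knot $\theta_m$, and on each such cell the discrepancy is controlled by the jump $|[z''](\theta_m)|$. Summing and using $\sum_m [z''](\theta_m)^2 \lesssim M\,|z''|_{L^2}^2$ (a trivial $L^\infty$-vs-$L^2$ comparison for functions piecewise-constant on a mesh of width $1/M$) yields the claim, with the factor $K^{-1/2} = (M/N)^{1/2}$ emerging as $\sqrt{\#\text{bad cells}\times\text{measure of each}}$.

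\emph{Step 2 (uniform nondegeneracy of the $S_0$--$Y_M$ pairing).} I would next prove
\begin{equation}
|u|_{L^2} \lesssim |Pu|_{L^2} \qquad \forall\, u \in S_0,
\end{equation}
with constant independent of $M$. Since $\dim S_0 = \dim Y_M = M-1$, this reduces to a uniform-in-$M$ lower bound on the smallest singular value of $P|_{S_0}:S_0 \to Y_M$. Writing $u = \sum_m c_m \mathds{1}_{J_m}$ and pairing with the periodic quadratic B-spline basis $\{B_m\}$ of $Y_M$, the integrals of $B_m$ over its three supporting cells are in ratio $1:4:1$, so
\begin{equation}
\langle u, B_m\rangle_{L^2} = \tfrac{1}{6M}\bigl(c_{m-1} + 4 c_m + c_{m+1}\bigr),
\end{equation}
a circulant operator on $\mathbb{Z}/M\mathbb{Z}$ whose Fourier symbol $4 + 2\cos(2\pi k/M)$ lies in $[2,6]$, hence is uniformly away from zero. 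Combined with the standard uniform conditioning of the pentadiagonal B-spline mass matrix $\langle B_m, B_{m'}\rangle$, this gives $|Pu|_{L^2}^2 \simeq M^{-1}\sum_m c_m^2 = |u|_{L^2}^2$.

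\emph{Conclusion.} Setting $w := ANP^t z + z''$ (so $|w|_{L^2} \lesssim K^{-1/2}\,|z''|_{L^2}$ by Step~1) and noting that $z'' \in S_0$ (because $\int z''\,d\theta = 0$ by periodicity of $z'$), I would write $\bar A z = -Pz'' + Pw$ and apply Step~2 to get
\begin{equation}
|\bar A z|_{L^2} \ge |Pz''|_{L^2} - |w|_{L^2} \gtrsim |z''|_{L^2} - C K^{-1/2}\,|z''|_{L^2}.
\end{equation}
Choosing $K^*$ large enough for the perturbation to be absorbed yields $|z''|_{L^2} \lesssim |\bar A z|_{L^2}$ uniformly in $M, K$, and hence $|ANP^t z|_{L^2} \le |z''|_{L^2} + |w|_{L^2} \lesssim |\bar A z|_{L^2}$; setting $y = \bar A z$ proves the lemma. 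I expect the main obstacle to be the uniform-in-$M$ lower bound in Step~2: although it reduces to the non-vanishing of an explicit circulant symbol, the bookkeeping with B-spline overlap integrals needs care. The $K^{-1/2}$ decay in Step~1 is precisely what makes order-$2$ splines indispensable here: for piecewise-constant splines the analogous consistency error would be $O(1)$, which is why lower-order choices render $AP^t \bar A^{-1}$ unbounded as alluded to in Remark~\ref{r_remark_choosing_L_2}.
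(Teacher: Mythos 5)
Your proof is correct, and it reorganizes the paper's argument into a more modular form. Both proofs rest on the same three computational facts: (i) the explicit formula for $(ANP^t z)_i$ in terms of the piecewise second-derivative coefficients $\alpha_j$ of $z\in Y_M$, showing $(ANP^t z)_i=-2\alpha_j=-z''|_{J_j}$ on interior indices; (ii) the $O(K^{-1/2})$ consistency error arising only from the two fine cells per mesoscopic knot; and (iii) a uniform lower bound on the tridiagonal quadratic form with stencil $(1,4,1)$ (the B-spline pairing with piecewise constants). The paper bundles these together by defining the spline interpolation $I(z)=\sum_j(-2\alpha_j)B_j$, showing $\langle z,I(z)\rangle\geq\sigma|z|_{L^2}|I(z)|_{L^2}$ by strict diagonal dominance of that tridiagonal matrix, and then passing to $|Pz|_{L^2}\geq\sigma|z|_{L^2}$ via $\langle Pz,I(z)\rangle=\langle z,I(z)\rangle$. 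You instead split the argument into a consistency lemma ($ANP^t z+z''$ is $O(K^{-1/2})$ small in $L^2$) and a stability lemma ($|Pu|_{L^2}\gtrsim|u|_{L^2}$ uniformly for $u$ in the piecewise-constant space $S_0$), verifying the latter by the Fourier symbol $4+2\cos\xi\geq 2$ of the circulant together with the uniform conditioning of the B-spline mass matrix, and you close with a perturbation argument for $K\geq K^*$. The reorganization is genuinely different in structure: it isolates a stability statement of independent interest and makes clear exactly where the choice of second-order splines enters (through $K^{-1/2}$ in Step~1, which would be $O(1)$ at order zero or one). Your worry about the bookkeeping in Step~2 is unfounded — the symbol computation you sketch, together with the paper's tabulated values for $\langle B_j,B_k\rangle$, settles it. One minor point you should state explicitly: the bound $|Pw|_{L^2}\leq|w|_{L^2}$ in the conclusion follows from $P$ being the $L^2$-orthogonal projection onto $Y_M$; you use it silently when passing from $|Pw|$ to $|w|$.
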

The proof of Lemma~\ref{operator} is given in Section~\ref{s_spline_approximations}, where we gather and prove facts about splines. Let us now turn to the covariance term on the right hand side of~\eqref{e_curcialestimate_crucial_formula}. It is estimated by the next lemma.
\begin{lemma}[Covariance estimate]\label{p_crucial_covariance_estimate}
There is a universal constant~$0<C_{\cov}< \infty$ such that
  \begin{align}
    \label{e_crucial_covariance_estimate}
    \int  \frac{ | \cov_{\mu(dx|y)} \left(f , \nabla H \right)|^{2}}{\bar f} \bar \mu (dy) \leq \frac{C_{\cov}}{M^2} \int \frac{\nabla f \cdot A \nabla f}{f} \mu(dx).
  \end{align}
\end{lemma}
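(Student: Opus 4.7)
The plan is to reduce the covariance to an $A$-Dirichlet form via tangential integration by parts on the conditional Gibbs measure, and to extract the $M^{-2}$ factor from an inverse-type spline estimate.

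I would first rewrite
\begin{equation*}
\cov_{\mu(dx|y)}(f,\nabla H)=\int(f-\bar f)\bigl(\nabla H-\overline{\nabla H}_{\mu(\cdot|y)}\bigr)\,d\mu(\cdot|y).
\end{equation*}
A short variation of $\bar H$ identifies $\overline{\nabla H}_{\mu(\cdot|y)}=NP^t\nabla\bar H(y)\in\Image(P^t)=(\ker P)^\perp$; consequently, when tested against a tangent vector $v\in\ker P$ the mean drops out. Since $\mu(\cdot|y)$ has density $\propto e^{-H}$ on the affine slice $\{Px=y\}$ and $v$ is tangent to this slice, tangential integration by parts yields the clean identity
\begin{equation*}
v\cdot\cov_{\mu(\cdot|y)}(f,\nabla H)=\int v\cdot\nabla f\,d\mu(\cdot|y).
\end{equation*}
Applying Cauchy--Schwarz first pointwise in the $A$-inner product, $|v\cdot\nabla f|^2\le(v\cdot A^{-1}v)(\nabla f\cdot A\nabla f)$, and then in $L^2(\mu(\cdot|y))$ with weights $\sqrt f$ and $1/\sqrt f$, gives
\begin{equation*}
|v\cdot\cov_{\mu(\cdot|y)}(f,\nabla H)|^2\le\bar f(y)\,(v\cdot A^{-1}v)\int\frac{\nabla f\cdot A\nabla f}{f}\,d\mu(\cdot|y).
\end{equation*}

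The factor $M^{-2}$ then comes from an inverse spline estimate for $v\in\ker P$: since $\ker P$ is $L^2$-orthogonal to $Y_M$, the direct approximation bound $|\zeta-P\zeta|_{L^2}\lesssim M^{-1}|\zeta|_{H^1}$ from Lemma~\ref{p_facts_spline_approximation} dualizes to $\|v\|_{H^{-1}}\lesssim M^{-1}\|v\|_{L^2}$, equivalently $v\cdot A^{-1}v\lesssim|v|^2/M^2$ in the discrete inner product. This is the content of Lemma~\ref{Poin} via its constant $\gamma$, and it also explains the sharpness of the spline order $L=2$ stressed in Remark~\ref{r_remark_choosing_L_2}. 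Taking $v$ to be the $\ker P$-component of the covariance and integrating over $y$ against $\bar\mu$ then produces the tangential part of the estimate.

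The main obstacle is the orthogonal component $\Pi_{(\ker P)^\perp}\cov$, where tangential IBP is unavailable. My plan here is to combine the uniform LSI of Theorem~\ref{p_LSI_conditional} (to pass from $\var_{\mu(\cdot|y)}(f)$ to the Fisher information in $A$) with a Brascamp--Lieb-type bound using the uniform boundedness $\|\Hess H\|\le\kappa$ to control $\var_{\mu(\cdot|y)}(P^t\tilde v\cdot\nabla H)$ for $\tilde v\in Y_M$, exploiting that $\Image(P^t)$ has only $\dim Y_M\sim M$ spline directions to again recover an $M^{-2}$ factor. Assembling the tangential and normal contributions yields the claim with a constant of order $\kappa^2\gamma/\varrho^2$, consistent with Remark~\ref{r_explicit_constant}.
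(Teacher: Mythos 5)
Your plan has a correct and rather elegant piece, but also a genuine gap, and it diverges from the paper's route in a way worth spelling out.

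The paper does \emph{not} decompose the covariance vector. It proves Lemma~\ref{covest} (via the uniform LSI of Theorem~\ref{p_LSI_conditional} and the standard semigroup argument, as in Lemma~22 of \cite{GORV}), which bounds the \emph{full} vector $|\cov_{\mu(dx|y)}(f,\nabla H)|^2$ by $\frac{\kappa^2}{\varrho^2}\,\bar f(y)\int\frac{|\nabla_{\parallel}f|^2}{f}\,\mu(dx|y)$. The entire $M^{-2}$ gain then comes from a single application of Lemma~\ref{Poin} to $\nabla f$: $|\nabla_{\parallel}f|^2\leq\frac{\gamma}{M^2}\,\nabla f\cdot A\nabla f$ pointwise. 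Integrating over $y$ against $\bar\mu$ finishes it. That is the whole proof.

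Your tangential argument is in fact correct and quite pretty: the mean $\int\nabla H\,\mu(dx|y)\in(\ker P)^\perp$ (since $\int\nabla_{\parallel}H\,\mu(dx|y)=0$ by fiberwise IBP), so for $v\in\ker P$ the covariance collapses to $\int v\cdot\nabla f\,\mu(dx|y)$; Cauchy--Schwarz in the $A$-form and then with weights $\sqrt f,1/\sqrt f$ gives $|v\cdot\cov|^2\leq\bar f\,(v\cdot A^{-1}v)\int\frac{\nabla f\cdot A\nabla f}{f}\,\mu(dx|y)$; and dualizing Lemma~\ref{Poin} does give $v\cdot A^{-1}v\leq\frac{\gamma}{M^2}|v|^2$ for $v\in\ker P$. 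This bounds $\Pi_{\ker P}\cov$ with constant $\gamma$ alone, no LSI and no $\kappa$ --- genuinely more elementary than the paper on that component.

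The gap is the normal component, which you yourself flag as the obstacle. Your proposed mechanism --- that $\dim Y_M\sim M$ ``spline directions'' produces the $M^{-2}$ factor --- does not work: low dimensionality of $(\ker P)^\perp$ does not improve a covariance estimate (indeed, summing one-dimensional bounds over an $M$-dimensional basis would \emph{worsen} the estimate by a factor $M$, not help). In this problem the $M^{-2}$ always traces back to the separation of frequencies encoded in Lemma~\ref{Poin}: the tangential Fisher information $\int\frac{|\nabla_{\parallel}f|^2}{f}$ is $O(M^{-2})$ of the full $A$-Dirichlet form $\int\frac{\nabla f\cdot A\nabla f}{f}$. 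There is no substitute mechanism coming from the spline dimension. Moreover, a $\var(f)\var(g)$ Cauchy--Schwarz plus spectral gap gives $\var_{\mu(\cdot|y)}(f)\leq\frac{1}{\varrho}\int|\nabla_{\parallel}f|^2\,\mu(dx|y)$, which is not the weighted Fisher form $\bar f\int\frac{|\nabla_{\parallel}f|^2}{f}$ appearing in the target; you need the LSI covariance estimate (Lemma~\ref{covest}) in the form the paper uses, not a naive variance bound. Once you invoke Lemma~\ref{covest} and Lemma~\ref{Poin} to handle the normal part, you notice the same two ingredients already bound the \emph{entire} covariance without any decomposition, which makes the tangential/normal split unnecessary and the argument collapses to the paper's proof.
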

\begin{remark}
 The universal constant~$C_{\cov}$ in the estimate~\eqref{e_crucial_covariance_estimate} is given by~$   \displaystyle\frac{\kappa^2 \gamma}{\varrho^2}$ 
where the constants~$\kappa$,~$\gamma$ and~$\varrho$ are given in Remark~\ref{r_explicit_constant}.
\end{remark}
The proof of Lemma~\ref{p_crucial_covariance_estimate} is given in Section~\ref{s_aux_results_micro_to_meso}. It is based on two facts. The first one is highly nontrivial: the measures $\mu(dx|y)$ satisfy a uniform LSI (see Theorem~\ref{p_LSI_conditional} below). This fact is deduced in the companion article~\cite{DMOW18b}. The second fact is the strong penalization of fluctuations around macroscopic observables in $L^2$ by the norm associated
with the positive definite matrix $A$ (see Lemma \ref{Poin} below).
\begin{proof}[Proof of Theorem~\ref{centralestimate}]
  Applying Lemma~\ref{p_curcialestimate_crucial_formula} and Corollary~\ref{p_centralestimate_convexity_applied} yields that 
  \begin{align}
        \frac{d}{dt}  \int & \frac{1}{2} | Px-\eta|^2_{\bar{A}^{-1}} f\, \mu (dx) + 2 \lambda \int |y - \eta|_{L^2}^2 \bar f \bar \mu (dy) \\
     &\leq \frac{\operatorname{dim}Y_M}{N}
    -\,\int   AP^t\bar{A}^{-1} (y-\eta)  \cdot \operatorname{cov}_{\mu(dx|y)}(f,\nabla H) 
 \,\bar{\mu} (dy)
   \label{e_p_centralestimate_crucial_formula}
  \end{align}
Applying Lemma~\ref{operator}, Lemma~\ref{p_crucial_covariance_estimate}, and Young's inequality yields that
\begin{align}
&  \left| \int   AP^t\bar{A}^{-1} (y-\eta)  \cdot \operatorname{cov}_{\mu(dx|y)}(f,\nabla H) 
 \,\bar{\mu} (dy) \right| \\
& \quad \leq \left( \int   |ANP^t\bar{A}^{-1} (y-\eta)|_{L^2}^2 \bar f  \bar{\mu} (dy) \right)^{\frac{1}{2}}  \left(  \int \frac{|\operatorname{cov}_{\mu(dx|y)}(f,\nabla H)|_{L^2}^2}{\bar f} 
 \,\bar{\mu} (dy) \right)^{\frac{1}{2}} \\
& \quad \leq  \left( \int \frac{1}{\sigma^2} | y-\eta|_{L^2}^2 \bar f  \bar{\mu} (dy) \right)^{\frac{1}{2}} \left(  \frac{C_{\cov}}{N M^2}\int \frac{\nabla f \cdot A \nabla f}{f} \mu(dx) \right)^{\frac{1}{2}}\\
& \quad \leq \lambda  \int  | y-\eta|_{L^2}^2 \bar f  \bar{\mu} (dy)  + \frac{C_{\cov}}{4 \lambda \sigma^2 M^2}   \frac{1}{N} \int \frac{\nabla f \cdot A \nabla f}{f} \mu(dx)\\
& \quad = \lambda  \int  | y-\eta|_{L^2}^2 \bar f  \bar{\mu} (dy) - \frac{C_{\cov}}{4 \lambda \sigma^2 M^2}  \frac{1}{N} \ \frac{d}{dt} \Ent \left(f(t) \mu | \mu \right), \label{e_p_centralestimate_estiamtion_second_term}
\end{align}
where we used in the last step the observation that
\begin{align}
  \label{e_time_derivative_entropy}
  \frac{d}{dt} \Ent \left(f(t) \mu | \mu \right) = -  \int \frac{\nabla f \cdot A \nabla f }{f} \mu(dx)
\end{align}
Combining~\eqref{e_p_centralestimate_crucial_formula} and~\eqref{e_p_centralestimate_estiamtion_second_term} and integrating over the time interval~$[0,T]$ yields the desired estimate~\eqref{centest1}.
\end{proof}

\section{Proof of Theorem~\ref{p_quantitative_meso_to_macro}: Convergence of mesoscopic dynamics to macroscopic dynamics}\label{s_meso_macro}

In this section we state the proof of Theorem~\ref{p_quantitative_meso_to_macro}. We need to show that the mesoscopic evolution~\eqref{e_def_mesoscopic_dynamics}
\begin{align}
      \frac{d}{dt} \eta(t) = - \bar A  \nabla \bar H(\eta(t))
\end{align}
converges to the macroscopic evolution~\eqref{hydro_equn} i.e.
\begin{align}
  \frac{ \partial }{\partial t} \zeta (t) =\frac{\partial^2}{\partial \theta^2}\nabla \mathcal{H} (\zeta (t)) = \frac{\partial^2}{\partial \theta^2}\varphi'(\zeta (t)).
\end{align}

Formally, this means that one has to exchange the operator~$\bar A$ with the operator~$\frac{\partial^2}{\partial \theta^2}$ and the functional ~$\nabla \bar H(\cdot) $ with the functional ~$\nabla \mathcal{H}(\cdot)$. This sounds plausible because~$\bar A  = PNAP^{t}$ involves the second order difference operator~$A$ and, as is shown in the companion article~\cite{DMOW18b}, the gradient of coarse-grained Hamiltonian, $\nabla \bar H (\cdot)$, converges to the gradient of macroscopic free energy, $\nabla \mathcal{H} (\cdot)$. \\

The argument of Theorem~\ref{p_quantitative_meso_to_macro} is inspired by the Galerkin approximation scheme, which is a well-known method in numerical analysis, based on some auxiliary results. The first auxiliary result is the strict convexity of the coarse grained Hamiltonian~$\bar H$ (cf.~Theorem~\ref{p_strict_convexity_coarse_grained_Hamiltonian}). We also need that the  macroscopic free energy~$\mathcal{H}$ is strictly convex:
\begin{lemma}[Strict convexity of the macroscopic free energy~$\mathcal{H}$]\label{p_strict_convexity_varphi}
  The function~$\varphi: \mathbb{R} \to \mathbb{R}$ given by~\eqref{average_hydro_potential} is smooth and satisfies
\begin{align}
 \varphi' (0)=0 \quad \mbox{and}  \quad  0 < \lambda  \leq  \varphi''(\theta) \leq \Lambda < \infty \quad \mbox{for all~$\theta \in \mathbb{R}$.} \label{e_convexity_estimate_local_free_energy}
\end{align} 
\end{lemma}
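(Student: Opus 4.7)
The key observation is that $\varphi$ is the Legendre transform of the log-partition (cumulant generating) function
\[
\Psi(\sigma) \,:=\, \log \int_{\mathbb{R}} \exp\bigl(\sigma x - \psi(x)\bigr)\, dx.
\]
I will first establish smoothness and sharp two-sided bounds on $\Psi''$, and then translate them via duality to the claimed bounds on $\varphi''$.

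The plan is to introduce, for each $\sigma \in \mathbb{R}$, the tilted probability measure $\nu_\sigma(dx) \propto \exp(\sigma x - \psi(x))\, dx$, so that standard differentiation under the integral (justified by the Gaussian tail coming from the $\frac12 x^2$ term in \eqref{e_structure_of_ss_potential} and dominated convergence) yields $\Psi \in C^\infty(\mathbb{R})$ with $\Psi'(\sigma) = \mathbb{E}_{\nu_\sigma}[X]$ and $\Psi''(\sigma) = \operatorname{var}_{\nu_\sigma}(X)$. The main step is to obtain $c \leq \Psi''(\sigma) \leq C$ uniformly in $\sigma$. For this I would compare $\nu_\sigma$ with the Gaussian reference $\mu_\sigma(dx) \propto \exp(\sigma x - \tfrac12 x^2 - a x)\, dx$, which has variance exactly $1$. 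Since $\psi - (\tfrac12 x^2 + a x) = \delta\psi$ is bounded by \eqref{e_assumptions_on_ss_perturbation}, the Radon–Nikodym derivative $d\nu_\sigma/d\mu_\sigma$ is bounded above and below by $e^{\pm 2\|\delta\psi\|_\infty}$. A Holley–Stroock style sandwich then gives
\[
e^{-2\|\delta\psi\|_\infty} \,\leq\, \operatorname{var}_{\nu_\sigma}(X) \,\leq\, e^{2\|\delta\psi\|_\infty},
\]
where the lower bound uses the variational characterization $\operatorname{var}_{\nu_\sigma}(X) = \min_{c} \mathbb{E}_{\nu_\sigma}[(X-c)^2] \geq e^{-2\|\delta\psi\|_\infty}\mathbb{E}_{\mu_\sigma}[(X-\mathbb{E}_{\nu_\sigma} X)^2] \geq e^{-2\|\delta\psi\|_\infty}\operatorname{var}_{\mu_\sigma}(X)$, and the upper bound is analogous with the roles swapped.

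Given these bounds, $\Psi$ is a strictly convex $C^\infty$ function with derivative $\Psi' : \mathbb{R} \to \mathbb{R}$ a smooth diffeomorphism. Standard Legendre duality then yields that the supremum in \eqref{average_hydro_potential} is attained at the unique $\sigma(m) = (\Psi')^{-1}(m)$, so $\varphi(m) = \sigma(m)\, m - \Psi(\sigma(m))$ is $C^\infty$ with $\varphi'(m) = \sigma(m)$ and $\varphi''(m) = 1/\Psi''(\sigma(m))$. The latter identity transports the uniform two-sided bound on $\Psi''$ to a uniform two-sided bound on $\varphi''$, proving the convexity estimate of \eqref{e_convexity_estimate_local_free_energy} with $\lambda = e^{-2\|\delta\psi\|_\infty}$ and $\Lambda = e^{2\|\delta\psi\|_\infty}$. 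Finally, $\varphi'(0) = 0$ follows because Assumption~\ref{a_wlog_m_zero} is precisely the statement $\Psi'(0) = 0$, hence $\sigma(0) = 0$ and $\varphi'(0) = \sigma(0) = 0$.

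I expect no serious obstacles: the only substantive point is the uniform lower bound on $\operatorname{var}_{\nu_\sigma}(X)$, and the argument above handles it by direct comparison with the tilted Gaussian without invoking any functional inequality beyond the elementary minimization property of the variance.
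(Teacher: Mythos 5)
Your proof is correct, and the approach — representing $\Psi''(\sigma)$ as the variance of the tilted measure $\nu_\sigma$, sandwiching it against the unit-variance Gaussian $\mu_\sigma$ via the uniform bound $e^{-2\|\delta\psi\|_\infty}\le d\nu_\sigma/d\mu_\sigma\le e^{2\|\delta\psi\|_\infty}$, and then passing to $\varphi''=1/\Psi''\circ(\Psi')^{-1}$ by Legendre duality — is precisely the standard argument behind the reference the paper points to (GORV, Lemma~41); the paper itself omits the proof. The only small thing you should add for completeness is the observation that the uniform lower bound $\Psi''\ge e^{-2\|\delta\psi\|_\infty}>0$ also forces $\Psi'(\sigma)\to\pm\infty$ as $\sigma\to\pm\infty$, so that $\Psi':\R\to\R$ is onto and $\sigma(m)=(\Psi')^{-1}(m)$ is defined for every $m\in\R$; this is needed so the supremum in \eqref{average_hydro_potential} is actually attained for all $m$.
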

We omit the proof of Lemma~\ref{p_strict_convexity_varphi}, which follows from basic estimates and properties of the Legendre transform (see for example~\cite[Lemma 41]{GORV}) and assumption \eqref{e_wlog_mean_0}. The next auxiliary result is that the gradients of the free energies~$\bar H$ and of~$\mathcal{H}$ are close:
\begin{lemma}[Convergence of gradient of coarse-grained Hamiltonian to gradient of macroscopic free energy] \label{p_convergence_meso_to_macro_free_energy}
There is an integer~$K^*$ such that if~$K \geq K^*$ then it holds for all~$x \in L^2(\mathbb{T})$
  \begin{align}
    \label{e_estimate_meso_to_macro_free_energy}
    \left|  \nabla \bar H (Px) -   \nabla \mathcal{H}(x)  \right|_{L^2}  \lesssim  \left(\frac{1}{K}+\frac{1}{M}\right)  |x|_{H^{1}} + \frac{1}{K} .
  \end{align}
\end{lemma}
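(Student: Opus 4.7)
The plan is to use the triangle inequality with two intermediate quantities $P\varphi'(Px) \in Y_M$ and $P\varphi'(x) \in Y_M$, writing
\[
\bigl|\nabla \bar H(Px) - \varphi'(x)\bigr|_{L^2} \leq E_1 + E_2 + E_3,
\]
with $E_1 := |\nabla \bar H(Px) - P\varphi'(Px)|_{L^2}$, $E_2 := |P\varphi'(Px) - P\varphi'(x)|_{L^2}$, and $E_3 := |P\varphi'(x) - \varphi'(x)|_{L^2}$. This decomposition separates a purely approximation-theoretic part ($E_2$ and $E_3$) from the probabilistic content ($E_1$).

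For $E_3$, Lemma~\ref{p_facts_spline_approximation} applied to $\zeta = \varphi'(x)$ gives $E_3 \lesssim \frac{1}{M}|\varphi'(x)|_{H^1}$, and the uniform bound $\varphi'' \leq \Lambda$ from Lemma~\ref{p_strict_convexity_varphi} combined with the chain rule yields $|\varphi'(x)|_{H^1} \lesssim |x|_{H^1}$. For $E_2$, the projection $P$ is an $L^2$-contraction and $\varphi'$ is globally Lipschitz, so $E_2 \leq \Lambda|Px - x|_{L^2} \lesssim \frac{1}{M}|x|_{H^1}$, again using \eqref{e_approximation_H_neg_1}. Both contributions are absorbed into the $\frac{1}{M}|x|_{H^1}$ term of \eqref{e_estimate_meso_to_macro_free_energy}.

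The substantive step is $E_1$: for a spline $y \in Y_M$, the gradient $\nabla \bar H(y)$ should agree with $P\varphi'(y)$ up to an error of order $\frac{1}{K}(|y|_{H^1}+1)$. This is precisely the \emph{convergence of the gradient of the free energies} that the introduction announces as an ingredient supplied by the companion article~\cite{DMOW18b}. Conceptually, for a test spline $\xi \in Y_M$,
\[
\langle \nabla \bar H(y), \xi \rangle_{L^2} = \int \sum_{n=1}^N \psi'(x_n)(NP^t\xi)_n \, \mu(dx|y),
\]
and a block-by-block equivalence-of-ensembles argument, leveraging the uniform LSI (Theorem~\ref{p_LSI_conditional}) and the strict convexity of $\bar H$ (Theorem~\ref{p_strict_convexity_coarse_grained_Hamiltonian}), replaces each conditional expectation of $\psi'(x_n)$ by the Legendre image $\varphi'$ of the local chemical potential representing $y$. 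Taking $y = Px$ and using $|Px|_{H^1} \lesssim |x|_{H^1}$ from \eqref{e_operator_norm_bounded_H_neg} then gives $E_1 \lesssim \frac{1}{K}(|x|_{H^1}+1)$, which supplies both the $\frac{1}{K}|x|_{H^1}$ and the standalone $\frac{1}{K}$ on the right-hand side of \eqref{e_estimate_meso_to_macro_free_energy}.

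The main obstacle is step $E_1$; it rests on sharp quantitative local-equilibrium estimates for the block-conditioned canonical ensemble and must account for the fact that spline constraints couple the blocks nontrivially (unlike the piecewise-constant setting of~\cite{GORV}). This is the content of~\cite{DMOW18b}. Once that result is invoked as a black box, the present lemma follows by combining it with the elementary spline-approximation bounds of Lemma~\ref{p_facts_spline_approximation} as indicated.
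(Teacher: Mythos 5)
The paper does not give a proof of this lemma in the present document: the only justification offered is the single sentence ``Lemma~\ref{p_convergence_meso_to_macro_free_energy} is deduced in the companion article (see Theorem~1.9 in~\cite{DMOW18b}).'' So there is no in-paper argument to compare against. Your decomposition into $E_1, E_2, E_3$ is sound and the handling of $E_2$ and $E_3$ is correct: the $L^2$-contractivity of $P$, the uniform bound $\varphi'' \le \Lambda$ (giving the global Lipschitz bound for $\varphi'$ and the chain-rule estimate $|\varphi'(x)|_{H^1} \lesssim |x|_{H^1}$), and the spline estimate $|\zeta - P\zeta|_{L^2} \lesssim M^{-1}|\zeta|_{H^1}$ from \eqref{e_approximation_H_neg_1} together produce the $M^{-1}|x|_{H^1}$ contribution. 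The substantive term $E_1$ you still delegate to~\cite{DMOW18b} as a black box, which is exactly what the paper does for the full statement. Two caveats. First, you are guessing at the form in which~\cite{DMOW18b} states its result: if Theorem~1.9 there is already phrased as \eqref{e_estimate_meso_to_macro_free_energy} for general $x \in L^2$, your $E_2$, $E_3$ estimates duplicate work inside the black box; if it is stated only for spline arguments $y\in Y_M$ (your $E_1$ in the form $|\nabla\bar H(y)-P\varphi'(y)|_{L^2}\lesssim K^{-1}(|y|_{H^1}+1)$), your decomposition is precisely the reduction needed and is the added value. Second, your sketch of the mechanism behind $E_1$ (LSI plus strict convexity plus block-by-block equivalence of ensembles) is plausible but speculative; without the companion article in hand it cannot be checked here, and nothing in the present paper's citation confirms that this is the argument used. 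As a blind reconstruction of the reduction, this is reasonable; as a proof it is incomplete in the same place the paper's is, so you have not closed the gap the paper leaves open.
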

Lemma~\ref{p_convergence_meso_to_macro_free_energy} is deduced in the companion article (see Theorem~1.9 in~\cite{DMOW18b}). The next auxiliary result provides a-priori energy estimates for the proof of Theorem~\ref{p_quantitative_meso_to_macro}.
\begin{lemma}\label{p_aux_estimates_meso_to_macro}
Let~$\zeta(t)$ denote the macroscopic dynamics given by~\eqref{e_macro_weak_formulation}. Then it holds that
\begin{align}
  \sup_{0\leq t \leq T} |\zeta(t)|_{L^2}^2 &\lesssim |\zeta(t=0) |_{L^2}^2.
  \label{e_estimate_sup_L2} \\
  \int_{0}^\infty |\varphi' (\zeta (t))|_{H^1}^2  dt &\lesssim |\zeta(t=0) |_{L^2}^2.
  \label{e_estimate_integral_varphi'_H1} \\
  \int_{0}^\infty |\zeta (t)|_{H^1}^2  dt &\lesssim |\zeta(t=0) |_{L^2}^2.
  \label{e_estimate_integral_H1}
\end{align}
\end{lemma}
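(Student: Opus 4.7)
The plan is to exploit the $H^{-1}$ gradient flow structure of the macroscopic equation $\partial_t \zeta = \partial_\theta^2 \varphi'(\zeta)$, whose associated energy is precisely $\mathcal{H}(\zeta) = \int_{\T} \varphi(\zeta)\,d\theta$. I would first observe that, at least formally,
\[
\frac{d}{dt}\,\mathcal{H}(\zeta(t)) \,=\, \int_{\T} \varphi'(\zeta)\,\partial_t \zeta \,d\theta \,=\, \int_{\T} \varphi'(\zeta)\,\partial_\theta^2 \varphi'(\zeta) \,d\theta \,=\, -\,|\varphi'(\zeta)|_{H^1}^2,
\]
so that integrating on $[0,T]$ yields the energy identity
\[
\mathcal{H}(\zeta(T)) \,+\, \int_0^T |\varphi'(\zeta(t))|_{H^1}^2 \,dt \,=\, \mathcal{H}(\zeta(0)).
\]

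Next, I would quantify $\mathcal{H}$ in terms of the $L^2$ norm. By Lemma~\ref{p_strict_convexity_varphi}, $\varphi$ is smooth with $\varphi'(0)=0$ and $\lambda \leq \varphi''\leq \Lambda$, so a second-order Taylor expansion gives $\tfrac{\lambda}{2} z^2 \leq \varphi(z)-\varphi(0)\leq \tfrac{\Lambda}{2} z^2$ for all $z\in\mathbb{R}$. Integrating over $\T$ (and noting that the additive constant $\varphi(0)$ cancels on both sides of the energy identity) yields
\[
\frac{\lambda}{2}\,|\zeta(T)|_{L^2}^2 \,+\, \int_0^T |\varphi'(\zeta(t))|_{H^1}^2 \,dt \,\leq\, \frac{\Lambda}{2}\,|\zeta(0)|_{L^2}^2.
\]
Since the right-hand side is independent of $T$, taking the supremum over $T \geq 0$ immediately gives \eqref{e_estimate_sup_L2} and \eqref{e_estimate_integral_varphi'_H1}. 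For \eqref{e_estimate_integral_H1}, the chain rule $\partial_\theta \varphi'(\zeta) = \varphi''(\zeta)\,\partial_\theta \zeta$ together with $\varphi''(\zeta)\geq \lambda$ gives $|\varphi'(\zeta)|_{H^1}^2 \geq \lambda^2\,|\zeta|_{H^1}^2$, and \eqref{e_estimate_integral_H1} follows directly from \eqref{e_estimate_integral_varphi'_H1}.

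The main obstacle is justifying the formal energy identity at the level of weak solutions in the sense of Definition~\ref{d_weak_solution}. Here I would first note that the weak formulation $\langle \xi,\partial_t \zeta\rangle_{H^{-1}} = -\langle \xi,\varphi'(\zeta)\rangle_{L^2}$ is equivalent to $(-\partial_\theta^2)^{-1}\partial_t \zeta = -\varphi'(\zeta)$ in $L^2_t(L^2_\theta)$; since $(-\partial_\theta^2)^{-1}$ is an isometry from $H^{-1}$ to $H^1$ (on mean-zero functions), the assumption $\partial_t \zeta \in L^2_t(H^{-1}_\theta)$ in fact forces $\varphi'(\zeta) \in L^2_t(H^1_\theta)$, so the spatial integration by parts in the formal computation is legitimate. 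What remains is the temporal chain rule for the composition $t \mapsto \int_{\T}\varphi(\zeta(t))\,d\theta$; this is a standard consequence of Lions-type chain rule lemmas (in the Aubin--Lions framework) given $\zeta \in L^\infty_t L^2_\theta$, $\varphi'(\zeta)\in L^2_t H^1_\theta$, and $\partial_t\zeta \in L^2_t H^{-1}_\theta$. Alternatively, one may approximate $\zeta(0)$ by smooth mean-zero data, run a standard Galerkin scheme to obtain smooth approximants for which the chain rule is trivial, derive the energy identity for those, and pass to the limit using lower semicontinuity of the $L^2_t H^1$ and $L^\infty_t L^2$ norms against the continuity of $\mathcal{H}$ on $L^2$.
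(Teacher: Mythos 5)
Your proposal follows exactly the paper's argument: treat $\mathcal{H}$ as a Lyapunov functional for the $H^{-1}$ gradient flow, use $\lambda \le \varphi'' \le \Lambda$ with $\varphi'(0)=0$ to sandwich $\mathcal{H}(\zeta)-\min\mathcal{H}$ between multiples of $|\zeta|_{L^2}^2$, and deduce \eqref{e_estimate_integral_H1} from \eqref{e_estimate_integral_varphi'_H1} via $|\varphi'(\zeta)|_{H^1}\ge\lambda|\zeta|_{H^1}$. The only difference is that you additionally discuss how to justify the formal chain rule $\frac{d}{dt}\mathcal{H}(\zeta)=-|\varphi'(\zeta)|_{H^1}^2$ at the weak-solution level, a point the paper leaves implicit.
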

The proof of Lemma~\ref{p_aux_estimates_meso_to_macro} is given in Section~\ref{s_auxiliary_results_section_meso_to_macro_proofs}. The next auxiliary result estimates the difference between the operator $\bar A$ and the second derivative $-\partial_\theta^2$, which allows us to exchange these two operators with some control of error. Intuitively, these two operators are close since $\bar A$ comes from the second difference operator $A$.
\begin{lemma}\label{s_aux_estimates_bar_A_to_partial_2} There exists an integer $K^*$ such that for all $K \geq K^*, M$ and all $y, \tilde{y} \in Y_M$, 
	\begin{align}
	|-\partial_\theta^2 \bar A^{-1} y|_{L^2} &\lesssim |y|_{L^2} , \label{e_norm_bar_A_to_partial_2} \\
	|\langle -\partial_\theta^2 \bar A^{-1} y, \tilde{y} \rangle_{L^2} - \langle y, \tilde{y}\rangle_{L^2} |&\lesssim \frac{1}{K} |y|_{H^{-1}} |\tilde{y}|_{H^1}.
	\label{e_inner_bar_A_to_partial_2}
	\end{align}
\end{lemma}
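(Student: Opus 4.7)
My plan is to set $z := \bar{A}^{-1} y \in Y_M$ and reduce both estimates to a careful comparison of the bilinear form $\langle \bar{A} z, \tilde{y}\rangle_{L^2}$ with $\langle -\partial_\theta^2 z, \tilde{y}\rangle_{L^2} = \langle \partial_\theta z, \partial_\theta \tilde{y}\rangle_{L^2}$. Using $\bar{A} = P A N P^{t}$, the fact that $N P^{t}$ is the $L^2$-adjoint of $P$, and $P \tilde{y} = \tilde{y}$, I obtain
\begin{equation*}
\langle \bar{A} z, \tilde{y}\rangle_{L^2} = \langle A N P^{t} z, N P^{t} \tilde{y}\rangle_{L^2}.
\end{equation*}
Discrete summation-by-parts for the three-point stencil $A$ gives, for step functions $u,w \in X_N$, $\langle A u, w\rangle_{L^2} = \langle D_N u, D_N w\rangle_{L^2}$, where $D_N u$ denotes the step function with value $N(u_i - u_{i-1})$ on $[(i-1)/N, i/N)$. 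So the whole analysis reduces to comparing the discrete gradient of $N P^{t} z$ with $\partial_\theta z$.

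For the second estimate, I use the pointwise identity
\begin{equation*}
(N P^{t} z)_i - (N P^{t} z)_{i-1} = \int_{\T} z'(\theta)\, N w_i(\theta)\, d\theta,
\end{equation*}
where $w_i$ is the piecewise linear hat of width $2/N$ centered at $(i-1)/N$ with $\int N w_i = 1$. The convolution $D_N N P^{t} z$ differs from the cell-average of $z'$ by an $O(1/N)$-wide smoothing of $z''$, so a direct Taylor expansion together with the spline inverse estimate $|z|_{H^2} \lesssim M |z|_{H^1}$ yields
\begin{equation*}
|D_N N P^{t} z - z'|_{L^2} \lesssim \frac{1}{N} |z''|_{L^2} \lesssim \frac{M}{N} |z|_{H^1} = \frac{1}{K} |z|_{H^1}.
\end{equation*}
Inserting this estimate and the analogous one for $\tilde y$, and noting $|D_N NP^t \tilde y|_{L^2} \lesssim |\tilde y|_{H^1}$, gives
\begin{equation*}
\big|\langle \bar{A} z + \partial_\theta^2 z, \tilde{y}\rangle_{L^2}\big| \lesssim \frac{1}{K} |z|_{H^1} |\tilde{y}|_{H^1}.
\end{equation*}
Choosing $\tilde y = z$ and absorbing $O(1/K)$-errors for $K$ large gives $\langle \bar A z, z \rangle_{L^2} \simeq |z|_{H^1}^2$. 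Combining with $\langle \bar A z, z\rangle_{L^2} = \langle y, z\rangle_{L^2} \leq |y|_{H^{-1}} |z|_{H^1}$ then yields $|z|_{H^1} \lesssim |y|_{H^{-1}}$, which closes the second estimate.

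For the first estimate I argue that $|\partial_\theta^2 z|_{L^2} \lesssim |A N P^{t} z|_{L^2}$, after which Lemma \ref{operator} applied to $y = \bar A z$ finishes. The key observation is that on any cell $i$ whose two neighbors $i\pm 1$ also lie in the same macroscopic cell $[(m-1)/M, m/M)$, the function $z$ restricted to a neighborhood of $[(i-1)/N, i/N)$ is a single quadratic polynomial, so the discrete three-point Laplacian $(A N P^t z)_i$ equals $-\partial_\theta^2 z$ there exactly. There are only $O(1)$ ``straddle'' cells per knot $m/M$; writing the two adjacent quadratic pieces with the $C^1$-matching $q_m(m/M) = q_{m+1}(m/M)$, $q_m'(m/M) = q_{m+1}'(m/M)$, a direct computation gives $|(ANP^t z)_i| \lesssim |a_m| + |a_{m+1}|$ with $2a_m := \partial_\theta^2 z|_{\text{macro cell } m}$. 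Summing the $L^2$ contributions over the $O(M)$ straddle cells contributes at most $\tfrac{1}{K} |\partial_\theta^2 z|_{L^2}^2$, while the contribution of non-straddle cells is $|\partial_\theta^2 z|_{L^2}^2$ up to the same error. Hence for $K$ large enough $|ANP^t z|_{L^2}^2 \simeq |\partial_\theta^2 z|_{L^2}^2$, from which the first estimate follows.

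The main obstacle is the numerical-analysis-type bookkeeping near the knots $\{m/M\}$: one must carefully track the $O(1/K)$ corrections coming from hat-averaging of $z'$ in the second estimate, and the individual straddle-cell contributions to $|ANP^t z|_{L^2}$ in the first. The $C^1$-continuity of splines of degree $2$ (so that $z''$ merely \emph{jumps} across knots rather than having $\delta$-distribution derivatives) is what makes these errors close at the advertised rate $1/K$; piecewise linear splines would not suffice.
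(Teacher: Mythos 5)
Your proposal is correct and follows essentially the same route as the paper: for~\eqref{e_inner_bar_A_to_partial_2} you compare $\langle \bar A z, \tilde y\rangle_{L^2}$ with $\langle z, \tilde y\rangle_{H^1}$ via the discrete-versus-continuous gradient of $NP^t z$ together with the inverse Sobolev inequality $|z|_{H^2}\lesssim M|z|_{H^1}$ (this is the paper's Lemma~\ref{p_closeness_H_1_and_A_norm_concrete}), and for~\eqref{e_norm_bar_A_to_partial_2} you show $|-\partial_\theta^2 z|_{L^2}\lesssim|ANP^t z|_{L^2}$ by the straddle-cell bookkeeping near the knots and then invoke Lemma~\ref{operator}. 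The only cosmetic difference is that you re-derive the norm equivalences $\langle\bar A z,z\rangle\simeq|z|_{H^1}^2$ and $|z|_{H^1}\lesssim|y|_{H^{-1}}$ inline, where the paper instead cites Corollary~\ref{p_closeness_H_1_and_A_norm_concrete_cor} and Lemma~\ref{p_equivalence_neg_H_norm_and_norm_generated_by_neg_bar_A}.
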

The proof of Lemma~\ref{s_aux_estimates_bar_A_to_partial_2} is given in Section~\ref{s_spline_approximations}, where we gather and prove facts about splines. Estimate \eqref{e_norm_bar_A_to_partial_2} is closely related to estimate \eqref{opestimate}. The last auxiliary result calculates the time derivative of the projected macroscopic dynamics $P\zeta(t)$.  
\begin{lemma}\label{s_aux_time_derivative_projected_macro} Let~$\zeta(t)$ denote the macroscopic dynamics given by~\eqref{e_macro_weak_formulation}. Then $P\zeta \in H^1_t(Y_M)$, and
	\begin{align}
	\frac{d}{dt} P\zeta &= P\frac{\partial \zeta}{\partial t}.
	\label{e_time_derivative_projected_macro}
	\end{align}
\end{lemma}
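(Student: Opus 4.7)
The plan is to reduce the vector-valued claim to a family of scalar statements indexed by $y \in Y_M$, and then invoke the standard characterization of weak time derivatives. Since $Y_M$ is finite-dimensional, fixing any basis reduces both the assertion $P\zeta \in H^1_t(Y_M)$ and the identification of its derivative to showing that, for every $y \in Y_M$, the scalar function $t \mapsto \langle P\zeta(t), y\rangle_{L^2}$ lies in $H^1(0,T)$ with distributional derivative $t \mapsto \langle P\tfrac{\partial\zeta}{\partial t}(t), y\rangle_{L^2}$. The $L^2_t$-integrability of this derivative follows from the equivalence of all norms on $Y_M$ combined with the bound $|P\tfrac{\partial\zeta}{\partial t}|_{H^{-1}}\lesssim |\tfrac{\partial\zeta}{\partial t}|_{H^{-1}}$ of Lemma \ref{p_facts_spline_approximation} and the assumption $\tfrac{\partial\zeta}{\partial t}\in L^2_t(H^{-1}_\theta)$; similarly $P\zeta\in L^\infty_t(Y_M)$ because $P$ is a contraction on $L^2$.

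Next, I would invoke the defining property of the $H^{-1}$-extension of $P$ given at the end of Lemma \ref{p_facts_spline_approximation}: for every $\eta\in H^{-1}$ and every $\xi\in Y_M$ one has $\langle P\eta,\xi\rangle_{L^2}=\langle \eta,\xi\rangle_{L^2}$, where the right-hand side is interpreted via the $H^{-1}$--$H^1$ duality when $\eta\notin L^2$ (note $Y_M\subset C^1(\mathbb T)\subset H^1$). Applying this with $\xi=y$ to $\eta=\zeta(t)\in L^2$ and to $\eta=\tfrac{\partial\zeta}{\partial t}(t)\in H^{-1}$, the desired identity \eqref{e_time_derivative_projected_macro} reduces to the scalar statement
\begin{align}
\frac{d}{dt}\langle \zeta(t), y\rangle_{L^2} \,=\, \langle \tfrac{\partial\zeta}{\partial t}(t), y\rangle_{L^2} \qquad \text{in } \mathcal{D}'(0,T).
\end{align}

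This last identity is simply the unpacking of what it means for $\tfrac{\partial\zeta}{\partial t}$ to be the weak time derivative of $\zeta$ in the Bochner sense. Given Definition \ref{d_weak_solution}, testing against $\phi(t)\,y$ for an arbitrary $\phi\in C^\infty_c(0,T)$ and the fixed $y\in Y_M\subset L^2\cap H^1$ yields
\begin{align}
-\int_0^T \phi'(t)\,\langle \zeta(t), y\rangle_{L^2}\,dt \,=\, \int_0^T \phi(t)\,\langle \tfrac{\partial\zeta}{\partial t}(t), y\rangle_{L^2}\,dt,
\end{align}
which is precisely the distributional form of the scalar identity above.

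The main obstacle is bookkeeping rather than analysis: one must carefully track that the symbol $\langle\cdot,y\rangle_{L^2}$ simultaneously denotes an $L^2$-inner product (when paired with $\zeta(t)\in L^2$), an $H^{-1}$--$H^1$ duality pairing (when paired with $\tfrac{\partial\zeta}{\partial t}(t)\in H^{-1}$), and a $Y_M$-inner product after projecting, and that the extension property from Lemma \ref{p_facts_spline_approximation} makes all three consistent on their overlap. Once this is set up, the finite-dimensionality of $Y_M$ eliminates any Bochner-measurability concerns and the argument reduces to invoking the standard Lions-type fact recalled above.
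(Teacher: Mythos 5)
Your proposal is correct and follows essentially the same route as the paper: use boundedness of $P$ on $L^2$ and $H^{-1}$ together with the finite-dimensionality of $Y_M$ to get the integrability statements, then reduce the identity \eqref{e_time_derivative_projected_macro} to the scalar definition of the weak time derivative. Where the paper dismisses the final step as "straightforward" after stating the auxiliary identity \eqref{e_aux_derivative_in_two_senses} for $C^1$-curves in $Y_M$, you carry it out explicitly by pairing against each fixed $y\in Y_M$ and invoking the $H^{-1}$-extension property of $P$ from Lemma~\ref{p_facts_spline_approximation}; this is a cleaner way to organize the "straightforward" verification and makes the measurability and duality bookkeeping transparent.
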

The proof of Lemma~\ref{s_aux_time_derivative_projected_macro} is given in Section ~\ref{s_auxiliary_results_section_meso_to_macro_proofs}. We are now ready to prove Theorem ~\ref{p_quantitative_meso_to_macro}.

\begin{proof}[Proof of Theorem~\ref{p_quantitative_meso_to_macro}]
	
	We first bound $\eta - P\zeta$. Because of Lemma~\ref{p_equivalence_neg_H_norm_and_norm_generated_by_neg_bar_A}, we will work with the more natural $\bar{A}^{-1}$ norm instead of $H^{-1}$ norm. By Lemma~\ref{s_aux_time_derivative_projected_macro}, $\eta - P\zeta \in H^1_t(Y_M)$, so product rule gives
	\begin{align*} \frac{d}{dt} \frac{1}{2} |\eta - P\zeta|^2_{\bar{A}^{-1}}  = \langle \frac{d}{dt} \eta  , \eta - P\zeta \rangle_{\bar{A}^{-1}}^2  -  \langle  \frac{d}{dt} P\zeta , \eta - P\zeta \rangle_{\bar{A}^{-1}}.
	\end{align*}
	
	Using the definition of mesoscopic dynamics, the first term becomes
	\begin{equation}
	\langle \frac{d}{dt} \eta  , \eta - P\zeta \rangle_{\bar{A}^{-1}} \overset{\eqref{e_def_mesoscopic_dynamics}}{=} \langle -\bar{A}\nabla \bar H(\eta)  , (\eta - P\zeta) \rangle_{\bar{A}^{-1}} 
	= \langle -\nabla \bar H(\eta), \eta - P\zeta \rangle_{L^2}.
	\end{equation}
	
	Using Lemma~\ref{s_aux_time_derivative_projected_macro} and the definition of the macroscopic dynamics, the second term becomes
	\begin{align}
	-  \langle  \frac{d}{dt} P\zeta , \eta - P\zeta \rangle_{\bar{A}^{-1}} &\overset{\eqref{e_time_derivative_projected_macro}}{=}  -  \langle  P \frac{\partial \zeta}{\partial t}  , \eta - P\zeta \rangle_{\bar{A}^{-1}} \\
	&= -  \langle  \frac{\partial \zeta}{\partial t} , \bar{A}^{-1} (\eta - P\zeta) \rangle_{L^2} \\
	&= -  \langle  \frac{\partial \zeta}{\partial t} ,- \partial_\theta^2 \bar{A}^{-1} (\eta - P\zeta) \rangle_{H^{-1}} \\
	&\overset{\eqref{e_macro_weak_formulation}}{=} \langle  \varphi'(\zeta) ,-\partial_\theta^2 \bar{A}^{-1} (\eta - P\zeta) \rangle_{L^2}.
	\end{align}
	
	Combining these two terms, and then regrouping terms, we get
	\begin{align}\frac{d}{dt} \frac{1}{2} | \eta - P\zeta|^2 _{\bar{A}^{-1}}  =&
	\langle -\nabla \bar H(\eta), \eta - P\zeta \rangle_{L^2} + \langle  \varphi'(\zeta) ,-\partial_\theta^2 \bar{A}^{-1} (\eta - P\zeta) \rangle_{L^2} \\
	 =&  \langle\nabla \bar H(P\zeta) - \nabla \bar H(\eta),  \eta - P\zeta \rangle_{L^2} 
	 \label{e_convexity_meso}\\
	&+ \langle \varphi'(P\zeta)-\nabla \bar H(P\zeta),  \eta - P\zeta \rangle_{L^2}
	\label{e_meso_to_macro_grad}\\
	&+ \langle \varphi'(\zeta) - \varphi'(P\zeta), \eta - P\zeta \rangle_{L^2}
	\label{e_convexity_macro_fluct} \\
	&+ \langle \varphi'(\zeta) - P\varphi'(\zeta), -\partial_\theta^2 \bar{A}^{-1}  (\eta - P\zeta) \rangle_{L^2}
	\label{e_term_1}  \\
	&+ \langle P\varphi'(\zeta), (-\partial_\theta^2 \bar{A}^{-1} - \id ) (\eta - P\zeta) \rangle_{L^2}
	\label{e_term_2} 
	\end{align}
	
	Estimation of the term~\eqref{e_convexity_meso}: by the uniform strict convexity of~$\bar{H} $ (cf.~Theorem~\ref{p_strict_convexity_coarse_grained_Hamiltonian}),
	\begin{align}
	\langle \nabla \bar H ( P \zeta) - \nabla \bar H (\eta) , \eta - P \zeta \rangle_{L^2}   \leq - \lambda | \eta - P \zeta|_{L^2}^2.
	\label{e_convexity_meso_done}
	\end{align}
	
	Estimation of the term~\eqref{e_meso_to_macro_grad}: by convergence of $\nabla \bar{H} (\cdot)$ to $\nabla \mathcal{H} (\cdot)$ (cf. Lemma~\ref{p_convergence_meso_to_macro_free_energy}) and \eqref{e_operator_norm_bounded_H_neg} (cf. Lemma~\ref{p_facts_spline_approximation}),
	\begin{equation}\langle  \varphi' ( P \zeta) - \nabla \bar H ( P \zeta), \eta - P \zeta \rangle_{L^2}
	\lesssim \left(  \left(\frac{1}{K} + \frac{1}{M} \right) |\zeta|_{H^1} + \frac{1}{K}\right) | 
	\eta - P\zeta |_{L^2} 
	\label{e_meso_to_macro_grad_done}
	\end{equation}
	
	Estimation of the term~\eqref{e_convexity_macro_fluct}: by the uniform boundedness of $\varphi''$ (cf. Lemma~\ref{p_strict_convexity_varphi}) and  \eqref{e_approximation_H_neg_1} (cf. Lemma~\ref{p_facts_spline_approximation}),
	\begin{align}
	\langle \varphi'(\zeta) - \varphi'(P\zeta), \eta - P\zeta \rangle_{L^2} \lesssim \frac{1}{M} |\zeta|_{H^1}|\eta - P\zeta|_{L^2}\cdot 
	\label{e_convexity_macro_fluct_done}
	\end{align}
	
	Estimation of the term~\eqref{e_term_1}: by \eqref{e_approximation_H_neg_1} (cf. Lemma~\ref{p_facts_spline_approximation}) and \eqref{e_norm_bar_A_to_partial_2} (cf. Lemma \ref{s_aux_estimates_bar_A_to_partial_2}),
	\begin{align}
	\langle \varphi'(\zeta) - P \varphi'(\zeta), -\partial_\theta^2 \bar{A}^{-1} (\eta - P\zeta) \rangle_{L^2} 
	 \lesssim \frac{1}{M} |\varphi'(\zeta)|_{H^1}|\eta - P\zeta|_{L^2}
	\label{e_term_1_done}
	\end{align}
	
	Estimation of the term~\eqref{e_term_2}: by \eqref{e_inner_bar_A_to_partial_2} (cf. Lemma \ref{s_aux_estimates_bar_A_to_partial_2}), \eqref{e_operator_norm_bounded_H_neg} (cf. Lemma \ref{p_facts_spline_approximation}), Lemma~\ref{p_equivalence_neg_H_norm_and_norm_generated_by_neg_bar_A}, and Poincare inequality,
	\begin{align}
	\langle P\varphi'(\zeta), (-\partial_\theta^2 \bar{A}^{-1} - \id ) (\eta - P\zeta) \rangle_{L^2} 
	\lesssim |\varphi'(\zeta)|_{H^1} \cdot \frac{1}{K} |\eta - P\zeta|_{L^2}
	\label{e_term_2_done}
	\end{align}
	
	Combining the estimates~\eqref{e_convexity_meso_done}, \eqref{e_meso_to_macro_grad_done}, \eqref{e_convexity_macro_fluct_done}
	\eqref{e_term_1_done}, ~\eqref{e_term_2_done} and Young's inequality yields that
\begin{equation}
  \frac{d}{dt} \frac{1}{2} | \eta - P\zeta|^2_{\bar A^{-1}}  \lesssim -\frac{\lambda}{2} |\eta - P\zeta|_{L^2}^2 + \frac{1}{K^2} + \left(\frac{1}{K^2}+\frac{1}{M^2}\right)\left(|\zeta|_{H^1}^2 + |\varphi'(\zeta)|_{H^1}^2\right)
 \end{equation}
 Bringing the term~$-\frac{\lambda}{2} |\eta - P\zeta|_{L^2}^2$ to the left side, integrating in time from $0$ to $T$, applying the energy estimates in Lemma \ref{p_aux_estimates_meso_to_macro}, and exchanging $\bar A^{-1}$ norm with $H^{-1}$ norm (cf. Lemma~\ref{p_equivalence_neg_H_norm_and_norm_generated_by_neg_bar_A}), we get
 \begin{align}
 \sup_{0\leq t\leq T}&\frac{1}{2}|\eta(t) - P\zeta(t)|_{H^{-1}}^2 +  \frac{\lambda}{2} \int_0^T |\eta(t) - P\zeta(t)|_{L^2}^2 dt \\   &\lesssim  \frac{T}{K^2} + \left(\frac{1}{K^2}+\frac{1}{M^2}\right)|\zeta(t=0)|_{L^2}^2
 \label{e_meso_to_macro_final}.
 \end{align}
Since $\eta - \zeta = (\eta - P\zeta) + (P\zeta - \zeta)$, by triangle inequality it remains to bound $P\zeta - \zeta$. This follows from a combination of the spline estimates in Lemma~\ref{p_facts_spline_approximation} and energy estimates in Lemma \ref{p_aux_estimates_meso_to_macro}.

\end{proof}

\section{Auxiliary results}\label{s_micro_to_meso_proof_aux_results}

\subsection{Proof of auxiliary results of Section~\ref{s_central_estimate}} \label{s_aux_results_micro_to_meso}

In this section we give the proof of Lemma~\ref{p_curcialestimate_crucial_formula} and of Lemma~\ref{p_crucial_covariance_estimate}. Before we proceed to the proof, we first describe the gradient~$\nabla_{||}$ on the fibers of~$\ker P =\left\{x \in X_N \ | \ Px =0 \right\} $ (this is deduced in Section~3.3 in ~\cite{DMOW18b}):
\begin{definition}[cf. Definition~3.13 in~\cite{DMOW18b}] \label{d_decomposition_fluctuation_mesoscopic_profile}
 Given~$x \in X_N$, let $x_{\parallel}$ denote the projection of~$x$ onto~$\ker P$ and let~$x_{\perp}$ denote the projection onto~$(\ker P)^{\perp} = \Image NP^t$. They are given by
  \begin{align*}
  x_{\parallel} = x - x_{\perp}  \quad  \mbox{and} \quad    x_{\perp} = NP^t(PNP^t)^{-1} P x.
  \end{align*} 
  \end{definition}
  
\begin{lemma}[cf. Lemma 3.7 in~\cite{DMOW18b}] \label{p_fluctuation_gradient}
  Let~$f: X_N \to \mathbb{R}$ be a smooth function. Let~$\nabla f$ be the gradient inherited from the standard Euclidean structure on~$X_N$. Then the gradient~$\nabla_{\parallel}$ on~$\ker P$ and the gradient ~$\nabla_{\perp}$ on ~$(\ker P)^{\perp} = \Image NP^t$ are given by
  \begin{align}
    \label{e_fluctuation_gradient}
    \nabla_{\parallel} f = \nabla f - \nabla_{\perp} f \quad  \mbox{and} \quad \nabla_{\perp} f = NP^t (PNP^t)^{-1} P \nabla f.
  \end{align}
\end{lemma}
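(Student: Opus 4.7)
The plan is to verify that the map $\Pi := NP^t(PNP^t)^{-1}P$ is the orthogonal projection of $X_N$ onto $\Image(NP^t)$ with respect to the Euclidean inner product, and then to observe that $\nabla_\perp f$ and $\nabla_\parallel f$ are, by their very definition, the components of the Euclidean gradient $\nabla f$ along $(\ker P)^\perp$ and $\ker P$ respectively. Then \eqref{e_fluctuation_gradient} follows from the uniqueness of the orthogonal decomposition.

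First I would record the purely linear-algebraic fact that, since $P^t$ and $NP^t$ have the same image (they differ by the scalar $N$) and since for the Euclidean pairing $\ker P = (\Image P^t)^\perp$, one has $(\ker P)^\perp = \Image(NP^t)$. Next, because Lemma~\ref{p_invertible_PNPt} makes $PNP^t \colon Y_M \to Y_M$ invertible for $K$ large, the map $\Pi$ is well defined and one checks directly:
\begin{itemize}
\item $\Pi x \in \Image(NP^t) = (\ker P)^\perp$ for every $x \in X_N$;
\item $\Pi^2 = NP^t(PNP^t)^{-1}\bigl[PNP^t\bigr](PNP^t)^{-1}P = \Pi$;
\item $\Pi$ is self-adjoint for the Euclidean inner product: writing the Euclidean adjoint of $P$ as $P^t$ in Definition~\ref{d_adjoint_NPt}, and using that $PNP^t$ is Euclidean-self-adjoint on $Y_M$ (hence so is its inverse), one finds $\Pi^t = NP^t (PNP^t)^{-1} P = \Pi$, where $N$ is a scalar that commutes with everything.
\end{itemize}
These three properties characterize $\Pi$ as the Euclidean-orthogonal projection onto $(\ker P)^\perp$.

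To conclude, let $x = x_\parallel + x_\perp$ be the decomposition from Definition~\ref{d_decomposition_fluctuation_mesoscopic_profile}. The restriction of $f$ to the affine slice $x_\parallel + \Image(NP^t)$ has as its gradient at $x$ the unique vector $g \in (\ker P)^\perp$ such that $\nabla f(x)\cdot h = g \cdot h$ for every $h \in (\ker P)^\perp$; by the definition of $\nabla_\perp f$ this vector is $\nabla_\perp f(x)$. But the orthogonal projection $\Pi \nabla f$ satisfies exactly the same two properties (it lies in $(\ker P)^\perp$ and agrees with $\nabla f$ against every test vector in $(\ker P)^\perp$), so by uniqueness $\nabla_\perp f = \Pi \nabla f = NP^t (PNP^t)^{-1} P \nabla f$. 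The identity $\nabla_\parallel f = \nabla f - \nabla_\perp f$ is then immediate from the orthogonal decomposition of $\nabla f$.

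There is no real obstacle here beyond carefully keeping track of which adjoint is being used: the Euclidean adjoint $P^t$ and the $L^2$-adjoint $NP^t$ differ by the scalar $N$, so their images coincide and the self-adjointness computation for $\Pi$ goes through cleanly. Once Lemma~\ref{p_invertible_PNPt} is in hand, the statement reduces to standard linear algebra.
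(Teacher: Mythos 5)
Your proof is correct. The paper itself does not include a proof of this lemma -- it cites Lemma~3.7 of the companion article \cite{DMOW18b} -- so there is no in-paper argument to compare against. Your route is the natural one: verify that $\Pi := NP^t(PNP^t)^{-1}P$ is idempotent, maps into $\Image NP^t = (\ker P)^\perp$, and is self-adjoint for the Euclidean inner product on $X_N$ (the self-adjointness hinges on $PNP^t$ being self-adjoint with respect to the $L^2$ inner product on $Y_M$, together with the adjoint relation $x\cdot P^ty = \langle Px,y\rangle_{L^2}$ from Definition~\ref{d_adjoint_NPt}); these three facts characterize $\Pi$ as the Euclidean-orthogonal projection onto $(\ker P)^\perp$, and the lemma follows by uniqueness of the orthogonal decomposition of $\nabla f$. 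One small remark: you call $PNP^t$ ``Euclidean-self-adjoint on $Y_M$,'' but $Y_M$ is equipped with the $L^2(\mathbb T)$ inner product, not the coordinate Euclidean one; the computation you sketch is nevertheless the right one and the conclusion holds.
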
 

With this setup, let us now proceed to the argument for Lemma~\ref{p_curcialestimate_crucial_formula}. We need the following auxiliary formula that establishes a link between microscopic and
mesoscopic gradients.

\begin{lemma}\label{nablabar}
We recall that for~$f: X_N\to \mathbb{R}$ and~$y \in Y_M$ we write $\bar f (y) = \int f(x) \mu(dx|y)$ (cf.~\eqref{e_d_bar_f}). Then it holds that
\begin{equation}
\int \nabla f \,\mu (dx|y) \,=\, P^t \nabla \bar{f}(y) \,+\, \operatorname{cov}_{\mu (dx|y)} (f,\nabla H),\label{nablabarf}
\end{equation}
where $\operatorname{cov}_{\mu (dx|y)} (f,\nabla H)$ denotes the
vector with entries $\operatorname{cov}_{\mu(dx|y)} (f,\partial_{x_i} H)$.
\end{lemma}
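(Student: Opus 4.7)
The plan is to identify $\nabla \bar f(y)$ by first computing it directly via a smooth parametrization of the fiber $\{Px=y\}$, and then to use integration by parts along the fiber to show that the resulting vector has the structure demanded by~\eqref{nablabarf}. First I would use the Euclidean-orthogonal splitting $X_N = \operatorname{Im} P^t \oplus \ker P$ (Definition~\ref{d_decomposition_fluctuation_mesoscopic_profile}) to parametrize the fiber as $x = Ty + z$ with $T := NP^t(PNP^t)^{-1}$ and $z \in \ker P$; the map $T$ satisfies $PT = \id_{Y_M}$. Under this parametrization $\bar f(y)$ is a quotient of two integrals over $\ker P$ whose $y$-dependence enters only through the translation by $Ty$, so routine differentiation under the integral together with the quotient rule yield, for every direction $w \in Y_M$,
\begin{equation*}
  D_w \bar f(y) \,=\, (Tw)\cdot \Bigl[\int \nabla f\,\mu(dx|y) \,-\, \operatorname{cov}_{\mu(dx|y)}(f,\nabla H)\Bigr] \,=:\, (Tw)\cdot V.
\end{equation*}

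The main obstacle, and the only non-routine step, is to show that the vector $V \in X_N$ actually lies in $\operatorname{Im} P^t = (\ker P)^{\perp}$; only then does the relation above invert cleanly to produce~\eqref{nablabarf}. To show this I would use integration by parts along the fiber: for any $v \in \ker P$ the direction $v$ is tangent to the fiber, and the conditional density is proportional to $e^{-H}$, so
\begin{equation*}
  \int (v\cdot\nabla f)\,\mu(dx|y) \,=\, \int f\,(v\cdot \nabla H)\,\mu(dx|y), \qquad \int v\cdot \nabla H\,\mu(dx|y) \,=\, 0,
\end{equation*}
the second identity following by specializing the first to $f \equiv 1$. Combining the two yields $v\cdot V = 0$ for every $v \in \ker P$, as desired.

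Once $V \in \operatorname{Im} P^t$, injectivity of $P^t$ (a consequence of Lemma~\ref{p_invertible_PNPt}) lets us write $V = P^t u$ for a unique $u \in Y_M$. Plugging this back into the formula for $D_w\bar f(y)$, and using the defining relation of $P^t$ from Definition~\ref{d_adjoint_NPt} together with $PT = \id_{Y_M}$,
\begin{equation*}
  D_w \bar f(y) \,=\, (Tw)\cdot P^t u \,=\, \langle P(Tw), u\rangle_{L^2} \,=\, \langle w, u\rangle_{L^2},
\end{equation*}
so $u = \nabla \bar f(y)$ in the $L^2$ sense on $Y_M$, which rearranged is exactly~\eqref{nablabarf}. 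Without the observation that $V$ has no $\ker P$ component, one would only have the non-square relation $\nabla \bar f(y) = T^*V$, which cannot be inverted to extract $V$ itself; this is what forces the integration by parts step to do the essential work.
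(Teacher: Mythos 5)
Your proof is correct and uses the same essential ingredients as the paper: the $T$-parametrization $x = Ty + z$ of the fiber, differentiation under the integral of the quotient formula for $\bar f$, and integration by parts in the $\ker P$ directions. The only difference is organizational — you compute $D_w\bar f$ in one stroke and use integration by parts to show $V \perp \ker P$, whereas the paper splits $\nabla f$ and $\nabla H$ into $\nabla_\parallel$ and $\nabla_\perp$ components from the start and recombines — so this is essentially the paper's argument in a slightly tidier form.
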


\begin{proof}[Proof of Lemma~\ref{nablabar}]
  We start by observing that, due to Lemma~\ref{d_decomposition_fluctuation_mesoscopic_profile}, it holds that for any test function~$g:X_N \to \mathbb{R}$
  \begin{align}
    \int g(x) \mu (dx | y) = \frac{\int g(x_{\parallel} + x_{\perp}) \exp (-H(x_{\parallel} + x_{\perp})) dx_{\parallel}}{\int \exp (-H(x_{\parallel} + x_{\perp})) dx_{\parallel}}.
  \end{align}
 Using this formula, we apply integration by parts and get
  \begin{align*}
    \int \nabla_{\parallel} f \mu (dx|y ) &= \int f \nabla_{\parallel} H  \mu (dx|y ), \\
    \int \nabla_{\parallel} H \mu (dx|y ) &= 0,
  \end{align*}
which implies
\begin{equation}\cov_{\mu (dx|y )} \left( f, \nabla_{\parallel} H \right) = \int \nabla_{\parallel} f \mu (dx|y ).
\end{equation}
Thus, we can express the second term of \eqref{nablabarf} as:
\begin{equation}\cov_{\mu (dx|y )} \left( f, \nabla H \right) = \int \nabla_{\parallel} f \mu (dx|y ) + \cov_{\mu (dx|y )} \left( f, \nabla_{\perp} H \right). \label{e_nablabarf_2}
\end{equation}
Write $y = Px$. Since $P^t \nabla \bar f(y) \in \Image P^t = \left(\ker P\right)^{\perp}$, ~$\left( P^t \nabla \bar f(y)\right)_{\parallel} = 0$.
We observe that any $z_{\perp} \in (\ker P)^{\perp}$, 
\begin{equation}
P^t \nabla \bar f (y)\cdot z_\perp =  \langle \nabla \bar f (P x), P z_\perp \rangle
= \nabla (\bar f \circ P) (x) \cdot z_\perp.
\end{equation}
Thus, we can calculate the first term of \eqref{nablabarf} as:
\begin{align}
 P^t \nabla \bar f (y)  &= \nabla (\bar f \circ P) (x)\\
& = \nabla_{\perp} \frac{\int f(x_{\parallel} + x _{\perp}) \exp (-H(x_{\parallel} + x_{\perp} )) dx_{\parallel}}{\int \exp (-H(x_{\parallel} + x_{\perp})) dx_{\parallel}} \\
&= \int   \nabla_{\perp} f \mu(dx |y) -  \int   f \nabla_{\perp} H  \mu(dx |y) \\ 
&\qquad + \int   f  \mu(dx |y)  \int \nabla_{\perp} H \mu(dx |y)  \\
& = \int   \nabla_{\perp} f \mu(dx |y)  - \cov_{\mu(dx|y)} \left(f,  \nabla_{\perp} H \right) \label{e_nablabarf_1}
\end{align}
Combining \eqref{e_nablabarf_1} and \eqref{e_nablabarf_2} gives the desired identity~\eqref{nablabarf}.
\end{proof}

With the help of Lemma~\ref{nablabar} the proof of Lemma~\ref{p_curcialestimate_crucial_formula} consists of a straightforward calculation.
\begin{proof}[Proof of Lemma~\ref{p_curcialestimate_crucial_formula}]
We recall that $P:L^{2}(\mathbb{T}) \to Y_M$ denotes the $L^2$-orthogonal projection onto the subspace~$Y_M$. Direct calculation yields that
  \begin{eqnarray}
   \lefteqn{\frac{d}{dt} \; \int \frac{1}{2} \,\langle P  x-\eta,\bar{A}^{-1} (P  x-\eta)\rangle_{L^2} \,f\, \mu(dx)} \nonumber\\
    &\stackrel{\eqref{micro_evolution}}{=}& -\int \frac{1}{2}\,\nabla \langle P  x-\eta,\bar{A}^{-1} (P  x-\eta)\rangle_{L^2} \cdot A \nabla f \,\mu(dx)\nonumber\\
      && - \int \langle \frac{d\eta}{dt},\bar{A}^{-1} (P x-\eta)\rangle_{L^2} f \,\mu(dx)\nonumber\\
    &\stackrel{\eqref{e_def_mesoscopic_dynamics}}{=}& -\int  P^t \bar{A}^{-1} (P x-\eta) \cdot A \nabla f  \,\mu (dx)\nonumber\\
      && + \int \langle \bar{A} \nabla \bar{H}(\eta),\bar{A}^{-1} (P  x-\eta)\rangle_{L^2} f \,\mu(dx) \nonumber\\
    &=&-\int \langle \bar{A}^{-1} (y-\eta), PA\,\int \nabla f \,\mu (dx|y)\rangle_{L^2} \,\bar{\mu}(dy)\nonumber\\
       && + \int \langle \nabla \bar{H}(\eta),y-\eta\rangle_{L^2} \,\bar{f}\, \bar{\mu}(dy).\nonumber
       \end{eqnarray}
By Lemma ~\ref{nablabar} and integration by parts, the first term becomes       
       \begin{eqnarray}
       \lefteqn{-\int \langle \bar{A}^{-1} (y-\eta), PA\,\int \nabla f \,\mu (dx|y)\rangle_{L^2} \,\bar{\mu}(dy)     } \nonumber \\          
    &\stackrel{\eqref{e_definition_bar_A}}{=}&-\int \langle \bar{A}^{-1} (y-\eta), N^{-1} \bar{A} \nabla \bar{f}\rangle_{L^2} \,\bar{\mu} (dy)         \\
    &&- \int \langle \bar{A}^{-1} (y-\eta),PA\,\operatorname{cov}_{\mu(dx|y)}(f,\nabla H)\rangle_{L^2} \,\bar{\mu} (dy)\nonumber\\
    &=& N^{-1}\int \nabla \cdot \left((y-\eta)\exp(-N\bar{H}(y))\right) \bar{f} \,dy
         \nonumber\\
      &&- \int   AP^t\bar{A}^{-1} (y-\eta) \cdot  \operatorname{cov}_{\mu(dx|y)}(f,\nabla H)  \,\bar{\mu} (dy)\nonumber \\
     &=& \frac{\operatorname{dim}Y_M}{N}
      \,-\, \int \langle y-\eta, \nabla \bar{H}(y) \rangle_{L^2} \bar{f}\, \bar{\mu} (dy) \nonumber \\
      &&- \int   AP^t\bar{A}^{-1} (y-\eta) \cdot  \operatorname{cov}_{\mu(dx|y)}(f,\nabla H)  \,\bar{\mu} (dy).\nonumber
      \end{eqnarray}
Combining the above gives the desired formula~\eqref{e_curcialestimate_crucial_formula}.
\end{proof}

Let us now turn to the verification of Lemma~\ref{p_crucial_covariance_estimate}. This involves a non-trivial ingredient: the conditional measure~$\mu (dx | P x=y)$ satisfies a uniform logarithmic Sobolev inequality (LSI).

\begin{theorem}[Uniform LSI for~$\mu(dx |x=y)$]\label{p_LSI_conditional}
 The conditional measure~$\mu(dx |P x =y)$ given by~\eqref{e_def_Gibbs_measure} satisfies a LSI with constant~$\varrho>0$ uniform in the system size~$N$ and the mesoscopic profile~$y$. More precisely, if~$f: Y_M \to \mathbb{R}$ is a nonnegative test function that satisfies~$\int f (x) \mu (dx |P x=y)=1$ then
 \begin{align}
   \label{e_d_LSI}
 \Ent \left( f \mu( dx | P x=y) | \mu( dx | P x=y) \right) \leq \frac{1}{2\varrho} \int \frac{|\nabla_{||} f(x)|^2}{f(x)} \mu( dx | Px=y),
 \end{align}
where~$|\nabla_{||}f|$ is the norm of the gradient on~$\ker P$ wrt.~the standard Euclidean structure.
\end{theorem}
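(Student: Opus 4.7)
The goal is a uniform LSI for the conditional measure $\mu(dx|Px=y)$ on the fiber $\{x\in X_N : Px=y\}$ with respect to the tangential gradient $\nabla_{||}$. The plan is to apply the two-scale LSI criterion of Gr\"unewald-Otto-Villani-Westdickenberg after decomposing the fiber into microscopic within-block fluctuations and mesoscopic fluctuations. A direct Bakry-\'Emery argument is blocked because $\Hess H = \Id + \mathrm{diag}(\delta\psi''(x_i))$ is only bounded below by $1-\|\delta\psi''\|_{L^{\infty}}$, which may be negative; a direct Holley-Stroock perturbation also fails because $\sum_i\delta\psi(x_i)$ grows linearly in $N$, so the resulting constant would degenerate with the system size.

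First, I would introduce an intermediate coarse-graining onto block averages $\Pi\colon X_N\to\R^M$, $(\Pi x)_m = \frac{1}{K}\sum_{i\in B(m)}x_i$, aligned with the spline mesh so that the affine constraint $Px=y$ on $X_N$ induces an affine constraint on $\Pi x$ up to a quantitatively small error coming from the fact that second-order B-splines are not block-constant (this should be controlled via Lemma~\ref{p_invertible_PNPt} and the localization of the B-spline basis). The conditional measure $\mu(dx\mid \Pi x=z)$ then factorizes over blocks into a product of canonical ensembles on each $B(m)$ with prescribed block mean $z_m$. By the Landim-Panizo-Yau theorem applied to bounded perturbations of quadratic single-site potentials, each block canonical ensemble satisfies an LSI with constant $\varrho_1>0$ uniform in $K$, $m$, and $z_m$; the product structure transports the same constant to $\mu(dx\mid\Pi x=z)$ on the fiber of $\Pi$.

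Second, I would analyze the marginal measure of the block averages $z$ conditioned on $Px=y$. On the relevant affine subspace of $\R^M$, its density has the form $\exp(-K\sum_m \bar{\psi}_K(z_m) + R(z))$ up to normalization, where $\bar{\psi}_K$ is the $K$-block Cram\'er transform of the single-site potential and $R(z)$ gathers the non-diagonal corrections in $z$ coming from $\delta\psi$ and from the spline constraint. By the uniform strict convexity of $\bar{\psi}_K$ for $K\geq K^{*}$, which is the one-block version of Theorem~\ref{p_strict_convexity_coarse_grained_Hamiltonian} established in the companion paper \cite{DMOW18b}, the leading Hessian of the marginal density is bounded below by a positive constant on the $M$-dimensional subspace, so Bakry-\'Emery yields an LSI with constant $\varrho_2>0$ uniform in $y$, since shifting to enforce a given $y$ only changes the density by an affine term that does not affect strict convexity.

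Finally, I would combine the inner LSI and the outer LSI via the two-scale tensorization lemma: whenever the cross-coupling between scales, typically measured by $\|\Hess_{x,z} H\|/\sqrt{\varrho_1 \varrho_2}$, is controlled, the joint measure $\mu(dx\mid Px=y)$ satisfies an LSI with a constant $\varrho>0$ depending only on $\varrho_1$, $\varrho_2$ and the coupling. In the present setting the cross-coupling is bounded by $\|\delta\psi''\|_{L^{\infty}}$, which is finite by assumption~\eqref{e_assumptions_on_ss_perturbation}. The main obstacle I expect is the rigorous identification of the affine constraint induced by $Px=y$ on the block-averaged coordinates together with the quantitative control of the remainder $R(z)$: because B-splines of degree two are not block-constant, the change of coordinates from $x$ to $(x-\Pi^{t}z,\,z)$ does not fully decouple from $P$, and the resulting cross-terms in the Hamiltonian must be shown to be of the same bounded order as $\delta\psi$ so that the two-scale coupling does not blow up with $K$. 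A secondary subtlety is ensuring uniformity in $y$, which requires the one-block convexity bound on $\bar{\psi}_K$ to hold uniformly in the shift parameter, exactly the generality in which the companion paper establishes it.
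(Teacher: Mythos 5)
The paper does not prove Theorem~\ref{p_LSI_conditional} here; it defers to Theorem~1.8 of the companion article~\cite{DMOW18b}, so I can only assess your argument on its own merits. Your overall template -- two-scale tensorization with a Landim--Panizo--Yau inner LSI and an outer LSI from coarse-grained convexity -- is the right family of ideas and is indeed what works in the piecewise-constant case of~\cite{GORV}. But the specific decomposition you set up does not carry over to the spline projection, and the problem is structural rather than a quantitative loose end.

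The gap is in your first step. You assert that ``the affine constraint $Px=y$ on $X_N$ induces an affine constraint on $\Pi x$ up to a quantitatively small error.'' This is not the case. The fiber $\{x\in X_N: Px=y\}$ has tangent space $\ker P$, and the block-averaging map $\Pi$ restricted to $\ker P$ is \emph{surjective} onto $\R^{M-1}$ (since $\ker P$ and $\ker\Pi$ are transverse codimension-$(M-1)$ subspaces of $X_N$). So on the fiber, $\Pi x$ sweeps out an $(M-1)$-dimensional affine set; the constraint $Px=y$ does not pin down $\Pi x$ even approximately, and Lemma~\ref{p_invertible_PNPt} says nothing of the sort -- it only compares $PNP^t$ to $\Id_{Y_M}$. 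Consequently, once you additionally condition on $\Pi x=z$, the inner object is $\mu(dx\,|\,\Pi x=z,\,Px=y)$, not $\mu(dx\,|\,\Pi x=z)$. The extra constraint $Px=y$ survives after fixing the block averages (quadratic B-splines are not block-constant, so $\langle x,B_j\rangle$ depends on within-block fluctuations) and it couples three consecutive blocks through the support of $B_j$. Hence the inner conditional measure does \emph{not} factorize into single-block canonical ensembles, and you cannot invoke Landim--Panizo--Yau block by block; by the same token, the ``marginal of $z$ given $Px=y$'' does not sit on an affine subspace of $\R^M$ and your computation of its density as $\exp(-K\sum_m\bar\psi_K(z_m)+R(z))$ is not justified.

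A repair would have to perform the two-scale decomposition \emph{inside} $\ker P$ rather than in the ambient $X_N$: split $\ker P = V\oplus W$ with, say, $V=\ker P\cap\ker\Pi$ and an appropriate complement $W$, and then apply the two-scale criterion to the measure on $\ker P$. But one then has to confront the genuinely new difficulty that $V$ is itself \emph{not} a direct sum over blocks (the condition $Px=0$ keeps coupling neighboring blocks even after block means vanish), so the inner measure still fails to be an exact product, and one needs either a refined decomposition or a criterion that tolerates short-range coupling across blocks (with the coupling explicitly quantified and fed into the cross-term of the two-scale lemma). This is precisely the subtlety that makes the spline case harder than~\cite{GORV} and that your proposal does not address; flagging it at the end as ``the main obstacle I expect'' without a plan for resolving it leaves the argument incomplete at its central step.
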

The logarithmic Sobolev inequality was first discovered by Gross~\cite{Gro75}. It characterizes the speed of convergence to equilibrium of the natural associated drift diffusion process. For more facts about the LSI we refer to the books \cite{R,BGL14} and survey article~\cite{L}. The proof of Theorem~\ref{p_LSI_conditional} is quite subtle. We refer to Theorem 1.8 in the companion article~\cite{DMOW18b}. Using the uniform LSI of Theorem~\ref{p_LSI_conditional} we can derive the following covariance estimate with the help of a standard argument (see Lemma 22 and proof of Proposition 20 in~\cite{GORV}). 
\begin{lemma} \label{covest}
We have:
\begin{equation}
|\operatorname{cov}_{\mu(dx|y)}(f,\nabla H)|^2 \,\leq\,
\frac{\kappa^2}{\rho^2}  \,\bar{f}(y) \, \int \frac{|\nabla_{||} f |^2}{f}  \,\mu(dx|y). \label{covexpl}
\end{equation}
\end{lemma}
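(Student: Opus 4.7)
My plan is to reduce the vector-valued covariance bound to a scalar one by duality, and then apply the standard covariance estimate that follows from the uniform LSI of Theorem~\ref{p_LSI_conditional}. Concretely, for any $v \in \mathbb{R}^N$ one has
\begin{equation*}
v \cdot \cov_{\mu(dx|y)}(f, \nabla H) \,=\, \cov_{\mu(dx|y)}(f,\, v \cdot \nabla H),
\end{equation*}
so choosing $v := \cov_{\mu(dx|y)}(f, \nabla H)$ at the end of the argument reduces the vector estimate to a scalar covariance estimate applied to $g := v \cdot \nabla H$. Since $\nabla g = \Hess H \cdot v$, the fiber gradient of $g$ satisfies $\sup_x |\nabla_{\parallel} g(x)| \leq \|\Hess H\| \cdot |v| \leq \kappa |v|$ by assumption on the single-site potential.

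The scalar estimate I aim for is
\begin{equation*}
\cov_{\mu(dx|y)}(f, g)^2 \,\leq\, \frac{\sup_x |\nabla_{\parallel} g(x)|^2}{\varrho^2}\,\bar f(y)\,\int \frac{|\nabla_{\parallel} f|^2}{f}\,\mu(dx|y),
\end{equation*}
which together with the bound on $|\nabla_{\parallel} g|$ and the duality step above yields exactly~\eqref{covexpl} after dividing by $|v|^2 = |\cov_{\mu(dx|y)}(f,\nabla H)|^2$. This scalar estimate is a classical consequence of Theorem~\ref{p_LSI_conditional}: LSI implies, via Herbst's argument, the sub-Gaussian concentration
\begin{equation*}
\int \exp\!\left(t(g - \bar g(y))\right)\mu(dx|y) \,\leq\, \exp\!\left(\tfrac{t^2}{2\varrho}\sup_x|\nabla_{\parallel} g(x)|^2\right), \quad t > 0,
\end{equation*}
and by entropy-exponential duality followed by optimization in $t$ one obtains $\cov_{\mu(dx|y)}(f,g)^2 \lesssim \varrho^{-1}\sup_x |\nabla_{\parallel} g|^2 \cdot \bar f(y) \cdot \Ent(f\mu(dx|y)\,|\,\bar f(y)\mu(dx|y))$. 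A second application of the LSI to $f/\bar f(y)$ then expresses the entropy in terms of the Dirichlet form, producing the factor $\int |\nabla_{\parallel} f|^2/f\,\mu(dx|y)$ and the second power of $\varrho^{-1}$.

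The main technical obstacle is not contained in this proof itself but in Theorem~\ref{p_LSI_conditional}, i.e.\ the uniform (in $N$ and $y$) LSI for the conditional measure $\mu(dx|y)$, which is established in the companion article~\cite{DMOW18b}. Once this input is granted, the only point to watch is that every gradient appearing above is the fiber gradient $\nabla_{\parallel}$ inherited from the Euclidean structure on $\ker P$; this is consistent because the covariance of $f$ and $g$ under $\mu(dx|y)$ depends only on the restrictions of $f$ and $g$ to the fiber $\{x : Px = y\}$, so the Herbst argument and the LSI are applied intrinsically on that fiber.
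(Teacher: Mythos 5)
Your proof is correct and is precisely the standard argument the paper points to in \cite{GORV} (Lemma 22 together with the proof of Proposition 20 there): reduce the vector-valued bound to a scalar one by pairing with $v = \operatorname{cov}_{\mu(dx|y)}(f,\nabla H)$, bound the fiber-Lipschitz constant of $g = v\cdot\nabla H$ by $\kappa|v|$ via $\|\Hess H\|\le\kappa$, and then run the Herbst/entropy--duality chain followed by a second application of the LSI, which reproduces the exact constant $\kappa^2/\varrho^2$. Since the paper itself only cites \cite{GORV} rather than spelling out a proof, you have filled in the intended argument, and your careful remark that everything is applied intrinsically on the fiber $\{Px=y\}$ with $\nabla_\parallel$ is exactly the point one must not gloss over.
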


Lemma~\ref{covest} almost yields the desired covariance estimate of Lemma~\ref{p_crucial_covariance_estimate}. However, we observe that the right hand side of~\eqref{covexpl} is not the right hand side of~\ref{p_crucial_covariance_estimate}. In order to get the correct right hand side, we have to pass from the Fisher information term involving
$\nabla_{||}$ on the right hand side of \eqref{covexpl} to the full Fisher information term for Kawasaki dynamics. This is done through the following estimate, which is a discrete analogue of the estimate \eqref{e_approximation_H_neg_1}.

\begin{lemma} \label{Poin}
	For $x\in X_N$, let $y \in Y_M$ be the unique solution of $PNP^ty = Px$. There is $\gamma >0$ such that:
	\begin{equation}
	|x_{||}|^2 = \left|x - NP^ty\right|^2  \,\leq\, \frac{\gamma}{M^2}  x \cdot Ax . \label{penalize}
	\end{equation}
\end{lemma}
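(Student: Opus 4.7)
The plan is to recast \eqref{penalize} as an $L^2(\mathbb{T})$ inequality via the identifications $|f|^2 = N\|f\|_{L^2(\mathbb{T})}^2$ and $f \cdot Af = N^2\sum_j(f_{j+1}-f_j)^2$, and then exploit the orthogonality $f \perp_{L^2} Y_M$ that follows from $Pf = 0$. Setting $f := x - NP^t y$, the defining relation $PNP^t y = Px$ gives $Pf = 0$ (equivalently, $f = x_\parallel$), so it remains to prove
\begin{equation*}
\|f\|_{L^2}^2 \;\lesssim\; \frac{1}{M^2 N}\, f \cdot Af.
\end{equation*}

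Since $P$ is the $L^2$-orthogonal projection onto $Y_M$ and $Pf = 0$, for every $h \in Y_M$ one has $\|f\|_{L^2} \leq \|f - h\|_{L^2}$. The plan is to choose $h = P\tilde f$, where $\tilde f \in H^1(\mathbb{T})$ is the continuous, mean-zero, piecewise linear interpolant of $f$ at the cell midpoints, i.e.\ $\tilde f\bigl((j-\tfrac12)/N\bigr) = f_j$ with $\tilde f$ linear between consecutive midpoints on the torus; the mean-zero property follows automatically from $\sum_j f_j = 0$ and periodicity. The triangle inequality then yields
\begin{equation*}
\|f\|_{L^2} \;\leq\; \|f - \tilde f\|_{L^2} + \|\tilde f - P\tilde f\|_{L^2}.
\end{equation*}

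Both terms are now directly computable in terms of the cell differences $f_{j+1}-f_j$. A cell-by-cell calculation, using that $f - \tilde f$ is linear on each half-cell and vanishes at the midpoint, gives
\begin{equation*}
\|f - \tilde f\|_{L^2}^2 \;=\; \tfrac{1}{12 N}\sum_j(f_{j+1}-f_j)^2 \;=\; \tfrac{1}{12N^3}\, f\cdot Af,
\end{equation*}
while the explicit form $\tilde f' = N(f_{j+1}-f_j)$ on $((j-\tfrac12)/N,(j+\tfrac12)/N)$ yields $\|\tilde f\|_{H^1}^2 = \|\tilde f'\|_{L^2}^2 = \frac{1}{N}\, f\cdot Af$. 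Applying Lemma~\ref{p_facts_spline_approximation} then delivers $\|\tilde f - P\tilde f\|_{L^2} \lesssim \frac{1}{M}\|\tilde f\|_{H^1} = \frac{1}{M\sqrt N}\sqrt{f\cdot Af}$, and combining the two estimates with $M\leq N$ gives $\|f\|_{L^2} \lesssim \frac{1}{M\sqrt N}\sqrt{f\cdot Af}$; squaring and multiplying by $N$ produces~\eqref{penalize}. The main subtlety is choosing the auxiliary $\tilde f$ so that it is simultaneously close to the piecewise constant $f$ in $L^2$ \emph{and} has its $H^1$-seminorm controlled by the discrete Dirichlet form $f\cdot Af$; the midpoint piecewise linear interpolant is tailored precisely for this, since both the derivative of $\tilde f$ and the error $f-\tilde f$ reduce to the cell differences $f_{j+1}-f_j$ appearing in $f\cdot Af$.
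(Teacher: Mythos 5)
Your approach is genuinely different from the paper's. The paper builds an explicit B-spline interpolant $I(x) = NP^t\bigl[\sum_j(\tfrac{1}{3K}\sum_{i}x_i)B_j\bigr]$, observes that $NP^ty$ minimizes the Euclidean distance from $x$ among elements of $\Image NP^t$, and estimates $|x - I(x)|^2$ cell by cell using $\sum_jB_j=1$, Young's inequality, and a discrete Poincar\'e inequality. You instead exploit $f\perp_{L^2}Y_M$, compare $f$ to its piecewise-linear midpoint interpolant $\tilde f$, and invoke the spline approximation estimate $|\tilde f - P\tilde f|_{L^2}\lesssim M^{-1}|\tilde f|_{H^1}$ from Lemma~\ref{p_facts_spline_approximation}. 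This is a clean and arguably more conceptual route; the exact constants you compute for $\|f-\tilde f\|_{L^2}$ and $\|\tilde f\|_{H^1}$ are correct.

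However, there is a genuine gap in your reduction. You claim it suffices to prove $\|f\|_{L^2}^2\lesssim\frac{1}{M^2N}\,f\cdot Af$ and you in fact establish $|x_{\parallel}|^2\lesssim\frac{1}{M^2}\,x_{\parallel}\cdot Ax_{\parallel}$; but the statement asserts $|x_{\parallel}|^2\leq\frac{\gamma}{M^2}\,x\cdot Ax$ with the \emph{full} vector $x$ on the right. These are not the same: writing $x=x_{\parallel}+x_{\perp}$, the decomposition is $L^2$-orthogonal, not $A$-orthogonal, so $x_{\parallel}\cdot Ax_{\parallel}\lesssim x\cdot Ax$ is an additional claim that you would have to justify (it amounts to boundedness of $x\mapsto x_{\parallel}$ in the $A$-norm, which is not immediate and is never used in the paper). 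As written, your proof establishes a statement that neither implies nor is implied by \eqref{penalize}.

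The fix is small and keeps your structure: since $f=x_{\parallel}$ is $L^2$-orthogonal both to $Y_M$ (because $Pf=0$) and to $x_{\perp}=NP^ty$ (because $\ker P\perp\Image NP^t$ in the Euclidean, hence $L^2$, inner product on $X_N$), one has for every $h\in Y_M$
\begin{equation}
\|f\|_{L^2}^2=\langle f, x-x_{\perp}\rangle_{L^2}=\langle f,x\rangle_{L^2}=\langle f, x-h\rangle_{L^2}\leq\|f\|_{L^2}\|x-h\|_{L^2},
\end{equation}
so $\|f\|_{L^2}\leq\|x-h\|_{L^2}$. Now take $h=P\tilde x$ with $\tilde x$ the midpoint piecewise-linear interpolant of $x$ (not of $f$); your two computations, run verbatim with $x$ in place of $f$, yield $\|x-\tilde x\|_{L^2}^2=\frac{1}{12N^3}\,x\cdot Ax$ and $\|\tilde x-P\tilde x\|_{L^2}\lesssim\frac{1}{M\sqrt N}\sqrt{x\cdot Ax}$, and the conclusion follows with the correct right-hand side.
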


The proof of Lemma~\ref{Poin} is deferred to Section \ref{s_spline_approximations}, where we gather and prove facts about splines.


\begin{proof}[Proof of Lemma~\ref{p_crucial_covariance_estimate}]
The statement of Lemma~\ref{p_crucial_covariance_estimate} follows now from a combination of Lemma~\ref{covest} and Lemma~\ref{Poin}.
\end{proof}

\subsection{Proofs of auxiliary results of Section~\ref{s_meso_macro}}\label{s_auxiliary_results_section_meso_to_macro_proofs}

In this section we give the proof of  Lemma~\ref{p_aux_estimates_meso_to_macro} and  Lemma~\ref{s_aux_time_derivative_projected_macro}.

\begin{proof}[Proof of Lemma~\ref{p_aux_estimates_meso_to_macro}]
By Lemma \ref{p_strict_convexity_varphi}	, $\varphi$ is convex, differentiable, and $\varphi'(0) = 0$, so $\varphi$ takes minimum at $\theta = 0$. Thus, $\mathcal{H}$ takes minimum at $\zeta \equiv 0$ and $\min \mathcal{H} = \varphi(0)$. Moreover, the uniform bounds for $\varphi''$ implies
\begin{align}\lambda |\zeta|_{L^2}^2 \leq \mathcal{H} (\zeta) - \min \mathcal{H} \leq \Lambda |\zeta|_{L^2}^2.
\label{e_bounds_for_macro}
\end{align}

Using the definition of macroscopic dynamics, we find
\begin{align}\label{e_hydro_equ_numeric_lemma_aux}
\frac{d}{dt} \mathcal{H} (\zeta) &= \frac{d}{dt} \int \varphi (\zeta(t, \theta)) d \theta \\
&= \int \varphi' (\zeta) \frac{\partial \zeta}{\partial t} d \theta \\
&\overset{\eqref{hydro_equn}}{=} \int \varphi' (\zeta) \partial_\theta^2 \left( \varphi' (\zeta) \right) d \theta \\
&= - |\varphi' (\zeta(t))|_{H^1}^2  \leq 0.
\label{e_time_derivative_macro}
\end{align}
Thus, $\mathcal{H}(\zeta(t))$ is decreasing in $t$, so by \eqref{e_bounds_for_macro}
\begin{equation}
 |\zeta(t)|_{L^2}^2 \lesssim \mathcal{H}(\zeta(t)) - \min \mathcal{H} 
  \leq \mathcal{H}(\zeta(0)) - \min \mathcal{H} 
   \lesssim |\zeta(0)|_{L^2}^2 
\end{equation}
which implies \eqref{e_estimate_sup_L2}. For \eqref{e_estimate_integral_varphi'_H1}, using~\eqref{e_time_derivative_macro} it follows that
\begin{equation}
  \int_0^\infty |\varphi' \left( \zeta(t) \right)|_{H^1}^2 dt  
  = \lim_{T\rightarrow \infty} \mathcal{H}(\zeta(0)) - \mathcal{H}(\zeta(T)) 
  \leq \mathcal{H}(\zeta(0)) - \min \mathcal{H} 
  \lesssim |\zeta(0)|_{L^2}^2 , 
\end{equation}
as to be shown. Finally, for \eqref{e_estimate_integral_H1}, observe that
\begin{align}
|\varphi'(\zeta)|_{H^1}^2 = |\partial_\theta(\varphi'(\zeta))|_{L^2}^2 
= |\varphi''(\zeta) \partial_\theta \zeta|_{L^2}^2 
\geq \lambda^2 |\partial_\theta \zeta|_{L^2}^2 
= \lambda^2 |\zeta|_{H^1}^2,
\end{align}
which gives
\begin{align}
\int_0^\infty |\zeta|_{H^1}^2 dt \lesssim  \int_0^\infty |\varphi' \left( \zeta(t) \right)|_{H^1}^2 dt \lesssim |\zeta (0)|_{L^2}^2
\end{align}
as to be shown.

\end{proof}

\begin{proof}[Proof of Lemma~\ref{s_aux_time_derivative_projected_macro}]

Since $\zeta \in L^\infty_t(L^2_\theta)$ and $\|P\|_{L^2_\theta \rightarrow L^2_\theta} = 1$, $P\zeta \in L^\infty_t(Y_M)$. Since $\displaystyle\frac{\partial \zeta}{\partial t} \in L^2_t(H^{-1}_\theta)$ and $\|P\|_{H^{-1}_\theta \rightarrow H^{-1}_\theta} < \infty$, $P\zeta \in L^\infty_t(Y_M)$ because norms on a finite dimensional space are equivalent. Thus, it remains to verify \eqref{e_time_derivative_projected_macro}. First, it's easy to check that for any $\xi \in C^1([0, T], Y_M)$, \begin{align}\label{e_aux_derivative_in_two_senses}
\frac{\partial \xi}{\partial t}(t, \theta) = \frac{d \xi}{d t}(t) (\theta).
\end{align}
Using this identity, it's straightforward to verify \eqref{e_time_derivative_projected_macro} by checking the definition of weak derivatives. 
\end{proof}

\subsection{Properties of spline approximations}\label{s_spline_approximations}
In this section we gather and prove the facts about splines~$y \in Y_M$ needed in this article. More precisely, we prove Lemma~\ref{p_equivalence_neg_H_norm_and_norm_generated_by_neg_bar_A}, Lemma~\ref{operator}, Lemma~\ref{s_aux_estimates_bar_A_to_partial_2}, and Lemma~\ref{Poin}. 

For the proof of Lemma~\ref{p_equivalence_neg_H_norm_and_norm_generated_by_neg_bar_A}, we need two auxiliary results. The first is an inverse Sobolev inequality on the space~$Y_M$ (cf. Lemma 4.8 in \cite{DMOW18b}).
\begin{lemma}[Inverse Sobolev inequality] \label{p_inverse_sobolev}
	For all~$y \in Y_M$ holds
	\begin{align}\label{e_inverse_sobolev}
	|y|_{H^{2}} \lesssim M  |y|_{H^{1}} \lesssim M^2 |y|_{L^2}.
	\end{align}
\end{lemma}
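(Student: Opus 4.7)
The plan is to exploit the finite-dimensionality of the space of polynomials of degree at most $2$ on a single mesh interval, on which all norms are equivalent, and then use a scaling argument to recover the sharp powers of $M$.

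First I would fix an interval $I_m=\bigl[\tfrac{m-1}{M},\tfrac{m}{M}\bigr]$ of length $h=\tfrac{1}{M}$ and note that $y|_{I_m}$ is a polynomial of degree $\leq 2$. On the reference interval $[0,1]$, the space $\mathcal{P}_2$ of polynomials of degree $\leq 2$ is a three-dimensional vector space, so the seminorms $p \mapsto \|p^{(k)}\|_{L^2([0,1])}$ are all continuous for $k=0,1,2$ and dominated by $\|p\|_{L^2([0,1])}$. Hence there is a universal constant $C$ such that for every $p \in \mathcal{P}_2$,
\begin{align}
\|p'\|_{L^2([0,1])} + \|p''\|_{L^2([0,1])} \leq C \|p\|_{L^2([0,1])}.
\end{align}

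Next I would rescale. Setting $p(s)=y\bigl(\tfrac{s+m-1}{M}\bigr)$ on $[0,1]$, one computes $\|p\|_{L^2}^2 = M\|y\|_{L^2(I_m)}^2$, $\|p'\|_{L^2}^2 = M^{-1} \|y'\|_{L^2(I_m)}^2$ and $\|p''\|_{L^2}^2 = M^{-3}\|y''\|_{L^2(I_m)}^2$, so the reference inverse inequality yields
\begin{align}
\|y'\|_{L^2(I_m)} \lesssim M \|y\|_{L^2(I_m)} \quad\text{and}\quad \|y''\|_{L^2(I_m)} \lesssim M \|y'\|_{L^2(I_m)}.
\end{align}
Because $y \in C^1(\mathbb{T})$, the first derivative has no jumps across the mesh points, so the piecewise-constant second distributional derivative is genuinely an $L^2$ function with no singular part, and summing in $m$ gives
\begin{align}
|y|_{H^1}^2 = \sum_{m=1}^M \|y'\|_{L^2(I_m)}^2 \lesssim M^2 \sum_{m=1}^M \|y\|_{L^2(I_m)}^2 = M^2 |y|_{L^2}^2,
\end{align}
and likewise $|y|_{H^2}^2 \lesssim M^2 |y|_{H^1}^2$, which is exactly \eqref{e_inverse_sobolev}.

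There is no real obstacle here: the only point that requires a word of care is the verification that the second weak derivative of $y$ is given by its pointwise piecewise-constant values — this is where the $C^1$ regularity of splines of degree $2$ is essential, since it rules out Dirac masses at the mesh points that would appear if one merely had $L^\infty$ first derivatives. Once that is noted, the argument reduces to the standard inverse inequality on a reference interval for a finite-dimensional polynomial space, followed by a scaling and summation over mesh cells.
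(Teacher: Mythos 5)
Your argument is correct; this is the standard scaling argument for inverse (Markov-type) inequalities on piecewise-polynomial spaces. Note that the paper does not actually prove this lemma here: it is quoted from Lemma~4.8 of the companion article~\cite{DMOW18b}, so there is no in-paper proof to compare against. The one point to tighten in your write-up: from the single displayed reference inequality $\|p'\|_{L^2([0,1])}+\|p''\|_{L^2([0,1])}\lesssim \|p\|_{L^2([0,1])}$ on $\mathcal{P}_2$, scaling gives $\|y'\|_{L^2(I_m)}\lesssim M\|y\|_{L^2(I_m)}$ and $\|y''\|_{L^2(I_m)}\lesssim M^2\|y\|_{L^2(I_m)}$, but the intermediate bound $\|y''\|_{L^2(I_m)}\lesssim M\|y'\|_{L^2(I_m)}$ that you actually use needs the reference inverse inequality applied to $p'\in\mathcal{P}_1$ (equivalently, $\|p''\|_{L^2([0,1])}\lesssim\|p'\|_{L^2([0,1])}$ on $\mathcal{P}_2$, again by finite-dimensionality); as written the inference does not follow from what you displayed, though the fix is immediate. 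Your remark that the $C^1$ regularity of quadratic splines is what prevents Dirac masses in the distributional second derivative at mesh points — so that $|y|_{H^2}$ is computed by summing the piecewise $L^2$ norms — is exactly the right point to make explicit.
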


The second auxiliary result we need is that the~$H^{1}$ inner product is close to the inner product induced by the positive definite operator ~$\bar{A}$
\[
\langle y, z \rangle_{\bar{A}} = \langle y , \bar{A} z \rangle_{L^2} \quad \mbox{and} \quad |y|_{\bar{A}} = \sqrt{\langle y , \bar{A} y \rangle_{L^2}}.
\]
\begin{lemma}\label{p_closeness_H_1_and_A_norm_concrete}
	There exists an integer $K^*$ such that for all $K\geq K^*, M$ and all~$y, \tilde y \in Y_M$
	\begin{align}
	| \langle \tilde y, \bar A  y \rangle_{L^2} - \langle \tilde y , y \rangle_{H^1} | & \lesssim  \frac{1}{N} \left( | \tilde y|_{H^{1}} |y|_{H^{2}}  + | \tilde y|_{H^{2}} |y|_{H^{1}}  \right)  \label{e_closeness_H_1_and_A_norm_concrete_weak} \\
	&\lesssim \frac{M}{N}| \tilde y|_{H^{1}} |y|_{H^{1}}. \label{e_closeness_H_1_and_A_norm_concrete}
	\end{align}
\end{lemma}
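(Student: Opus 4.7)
My plan is to reduce $\langle \tilde y,\bar A y\rangle_{L^2}$ to a discrete $H^1$-type quadratic form on cell averages, then rewrite this as a smoothed version of $\int \tilde y' y'\, d\theta$ with an explicit nonnegative kernel supported near the diagonal, and finally Taylor-expand to extract the error.

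First I would unpack the definition of $\bar A=PANP^t$: since $P$ is the $L^2$-orthogonal projection onto $Y_M$ and $\tilde y\in Y_M$, we have $\langle \tilde y,\bar A y\rangle_{L^2}=\langle \tilde y,ANP^t y\rangle_{L^2}$. Setting $v_i:=(NP^t y)_i=N\int_{(i-1)/N}^{i/N}y\,d\theta$ and $\tilde v_i$ analogously, the step-function structure of $ANP^t y\in X_N$ together with the explicit form \eqref{e_def_A} of $A$ and summation by parts gives
\begin{equation}
\langle \tilde y,\bar A y\rangle_{L^2}=N\sum_{i=1}^{N}(\tilde v_{i+1}-\tilde v_i)(v_{i+1}-v_i).
\end{equation}
Next, with $Y(\theta):=\int_0^\theta y(s)\,ds$ one recognizes $v_{i+1}-v_i=N\bigl(Y(\tfrac{i+1}{N})-2Y(\tfrac{i}{N})+Y(\tfrac{i-1}{N})\bigr)$, and the exact identity $Y(a+h)-2Y(a)+Y(a-h)=\int_{-h}^{h}(h-|u|)\,y'(a+u)\,du$ (valid since $y\in Y_M\subset C^1$) yields $v_{i+1}-v_i=\int_{\mathbb T} \psi_i(\tau)\,y'(\tau)\,d\tau$, where $\psi_i(\tau):=(1-N|\tau-i/N|)_+$ is the tent of height $1$ centered at $i/N$. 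Combining this with the analogous identity for $\tilde y$ gives
\begin{equation}
\langle \tilde y,\bar A y\rangle_{L^2}=\int_{\mathbb T}\!\int_{\mathbb T} K(t,s)\,\tilde y'(t)\,y'(s)\,dt\,ds,\qquad K(t,s):=N\sum_i \psi_i(t)\psi_i(s),
\end{equation}
and $K$ is symmetric, nonnegative, supported in $\{|t-s|\le 2/N\}$, and satisfies $\int K(t,s)\,ds=\sum_i\psi_i(t)=1$ (the tents form a partition of unity and each integrates to $1/N$).

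Using $\int K(t,s)\,ds=1$ to rewrite $\langle\tilde y,y\rangle_{H^1}=\int \tilde y'(t)\,y'(t)\,dt$ as a double integral against $K$, the difference becomes
\begin{equation}
R:=\langle \tilde y,\bar A y\rangle_{L^2}-\langle\tilde y,y\rangle_{H^1}=\int\!\int K(t,s)\,\tilde y'(t)\bigl(y'(s)-y'(t)\bigr)\,dt\,ds.
\end{equation}
Writing $y'(s)-y'(t)=\int_t^s y''(u)\,du$, a first Cauchy--Schwarz with respect to the probability measure $K(t,s)\,dt\,ds$ factors out $|\tilde y|_{H^1}$, and a second Cauchy--Schwarz gives $\bigl(\int_t^s y''\,du\bigr)^2\le |t-s|\int_{t\wedge s}^{t\vee s}|y''|^2\,du$. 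Using $|t-s|\le 2/N$ on $\mathrm{supp}\,K$ and Fubini, the remaining double integral is bounded by $\tfrac{1}{N^2}|y|_{H^2}^2$, so $|R|\lesssim \tfrac{1}{N}|\tilde y|_{H^1}|y|_{H^2}$. The symmetric bound $|R|\lesssim \tfrac{1}{N}|\tilde y|_{H^2}|y|_{H^1}$ follows by exchanging the roles of $\tilde y$ and $y$ via $K(t,s)=K(s,t)$; together they give \eqref{e_closeness_H_1_and_A_norm_concrete_weak}, and \eqref{e_closeness_H_1_and_A_norm_concrete} is then immediate from the inverse Sobolev inequality (Lemma~\ref{p_inverse_sobolev}).

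The main obstacle is bookkeeping rather than conceptual: one must verify the partition-of-unity and support properties of $K$ carefully on the periodic torus and ensure that the $|t-s|$-loss in the second Cauchy--Schwarz exactly balances against the $2/N$-support of $K$ to produce the factor $1/N^2$ before taking a square root. Once the kernel representation is set up, the remainder is a standard two-step Cauchy--Schwarz.
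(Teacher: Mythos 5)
Your proof is correct, and it takes a genuinely different route from the paper's. Both arguments start from the same observation that $\langle \tilde y, \bar A y\rangle_{L^2}=\frac1N(NP^t\tilde y)\cdot A(NP^t y)=N\sum_i(\tilde v_{i+1}-\tilde v_i)(v_{i+1}-v_i)$. From there the paper rewrites this as $\int_0^1 \partial_\theta^{1/N}\tilde y\,Q\,\partial_\theta^{1/N}y\,d\theta$ (forward difference quotient and $L^2$-projection $Q$ onto piecewise constants on the microscopic mesh), expands the difference with $\langle\tilde y,y\rangle_{H^1}$ into three telescoping error terms, and bounds each separately using elementary estimates on $\partial_\theta^{1/N}y-\partial_\theta y$ and a Poincar\'e inequality on cells for $(Q-\Id)\partial_\theta y$. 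You instead observe that $v_{i+1}-v_i$ is the second difference of the antiderivative $Y$, hence equals the pairing of $y'$ against the hat function $\psi_i$, which packages the whole quadratic form as $\iint K(t,s)\,\tilde y'(t)y'(s)\,dt\,ds$ with $K$ a symmetric kernel, supported in $|t-s|\le 2/N$, and a probability density in each variable; the partition-of-unity identity $\int K(t,s)\,ds=1$ then lets you write the remainder $R$ as a single double integral of $K\,\tilde y'(t)(y'(s)-y'(t))$, and a two-step Cauchy--Schwarz produces the factor $1/N$. The ingredients you use are all legitimate (the second-difference kernel identity needs only $y\in C^1$, which holds for quadratic splines; the step $y'(s)-y'(t)=\int_t^s y''$ needs only $y\in H^2$, which holds since $y''$ is piecewise constant; the symmetrization to get the $|\tilde y|_{H^2}|y|_{H^1}$ bound follows from the symmetry of the bilinear form). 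Your argument is somewhat more structural — the tent-kernel representation isolates in one object exactly how $\bar A$ differs from $-\partial_\theta^2$, and a single Cauchy--Schwarz replaces the paper's three-term decomposition — while the paper's version is more directly computational and stays closer to the difference-quotient language used elsewhere in the spline estimates. The final passage to \eqref{e_closeness_H_1_and_A_norm_concrete} via the inverse Sobolev inequality (Lemma~\ref{p_inverse_sobolev}) matches the paper exactly.
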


\begin{corollary}\label{p_closeness_H_1_and_A_norm_concrete_cor} There exists an integer $K^*$ such that for all $K\geq K^*, M$ and $y \in Y_M$,
	\begin{align}
	|y|_{H^1} \simeq |y|_{\bar A}.
	\label{e_equivalence_H_norm_and_bar_A}
	\end{align}
\end{corollary}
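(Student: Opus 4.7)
The plan is to obtain the norm equivalence \eqref{e_equivalence_H_norm_and_bar_A} as an immediate consequence of Lemma \ref{p_closeness_H_1_and_A_norm_concrete}. Specifically, I would apply the estimate \eqref{e_closeness_H_1_and_A_norm_concrete} with the choice $\tilde y = y$, which directly yields
\begin{equation}
\bigl| |y|_{\bar A}^2 - |y|_{H^1}^2 \bigr| \;=\; \bigl| \langle y, \bar A y \rangle_{L^2} - \langle y, y \rangle_{H^1} \bigr| \;\lesssim\; \frac{M}{N}\, |y|_{H^1}^2 \;=\; \frac{1}{K}\, |y|_{H^1}^2,
\end{equation}
where we have used $N = KM$.

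Rearranging, there is a universal constant $C$ such that
\begin{equation}
\Bigl(1 - \frac{C}{K}\Bigr) |y|_{H^1}^2 \;\leq\; |y|_{\bar A}^2 \;\leq\; \Bigl(1 + \frac{C}{K}\Bigr) |y|_{H^1}^2.
\end{equation}
Choosing $K^*$ large enough so that $C/K \leq 1/2$ for $K \geq K^*$, we conclude $\tfrac{1}{2} |y|_{H^1}^2 \leq |y|_{\bar A}^2 \leq \tfrac{3}{2} |y|_{H^1}^2$, which is precisely the equivalence $|y|_{\bar A} \simeq |y|_{H^1}$. There is essentially no obstacle at this stage: all of the analytic content has been absorbed into Lemma \ref{p_closeness_H_1_and_A_norm_concrete}, so the corollary reduces to the observation that in the regime $K \to \infty$ the relative discrepancy between the two quadratic forms is a vanishing perturbation.
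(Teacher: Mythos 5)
Your proposal is correct and is exactly the (implicit) argument the paper intends: the corollary is stated immediately after Lemma~\ref{p_closeness_H_1_and_A_norm_concrete} with no separate proof, precisely because taking $\tilde y = y$ in \eqref{e_closeness_H_1_and_A_norm_concrete} and using $M/N = 1/K$ gives the equivalence once $K$ is large enough. Nothing is missing.
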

This leads to a quick proof of Lemma~\ref{p_equivalence_neg_H_norm_and_norm_generated_by_neg_bar_A} by a duality argument.

\begin{proof}[Proof of Lemma~\ref{p_equivalence_neg_H_norm_and_norm_generated_by_neg_bar_A}] Let $z \in H^1(\mathbb{T})$ be arbitrary, then we have
	\begin{align}
	\langle y, z\rangle_{L^2} = \langle y, Pz\rangle_{L^2} 
	\leq |y|_{\bar A^{-1}} |Pz|_{\bar A} 
	\overset{\eqref{e_equivalence_H_norm_and_bar_A}}{\lesssim} |y|_{\bar A^{-1}} |Pz|_{H^1} 
	\overset{\eqref{e_operator_norm_bounded_H_neg}}{\lesssim} |y|_{\bar A^{-1}} |z|_{H^1}
	\end{align}
	which implies $|y|_{H^{-1}} \lesssim |y|_{\bar A^{-1}}$. To show the opposite inequality, let $w \in Y_M$ be arbitrary, then we have
	\begin{align}
	\langle y, \bar A^{-1} w\rangle_{L^2} \leq |y|_{H^{-1}} |\bar A^{-1} w|_{H^1} 
	\overset{\eqref{e_equivalence_H_norm_and_bar_A}}{\lesssim} |y|_{H^{-1}} |\bar A^{-1} w|_{\bar A} 
	\lesssim |y|_{H^{-1}} |w|_{\bar A^{-1}}
	\end{align}
	which implies $|y|_{\bar A^{-1}} \lesssim |y|_{H ^{-1}}$. 
\end{proof}

Let us now prove Lemma~\ref{p_closeness_H_1_and_A_norm_concrete}. The main ingredient of the argument is the inverse Sobolev inequality for~$Y_M$ in Lemma~\ref{p_inverse_sobolev}. 
\begin{proof}[Proof of Lemma~\ref{p_closeness_H_1_and_A_norm_concrete}] 
	We will prove the estimate~\eqref{e_closeness_H_1_and_A_norm_concrete_weak}. The estimate~\eqref{e_closeness_H_1_and_A_norm_concrete} follows from~\eqref{e_closeness_H_1_and_A_norm_concrete_weak} by an application of~\eqref{e_inverse_sobolev}. \smallskip
	
	Let us recall that the operator~$\bar A: Y_M \to Y_M $ is given by~$\bar A  = P ANP^t$, where~$A$ is the second order difference operator given by~\eqref{e_def_A} and~$P, NP^t$ are an adjoint pair of $L^2$-orthogonal projections defined in Definitions \ref{d_coarse_graining_operator_P} and \ref{d_adjoint_NPt}. It follows from the definition of $A$  that for two vectors $\tilde x, x \in \mathbb{R}^N$ 
	\begin{align*}
	\frac{1}{N} \tilde x \cdot A x \overset{\eqref{e_def_A}}{=} N \sum_{i=1}^N \left( \tilde x_i  - \tilde x_{i-1}\right) \left( x_i - x_{i-1}\right).
	\end{align*}
	It follows from the explicit formula for $NP^t$ that for $i \in \{1,2 ,\cdots, N\}$
	\begin{align*}
	 (NP^t y)_i - (NP^t y)_{i-1}    \overset{\eqref{e_adjoint_NP^t_explicit}}{=}  \int_{\frac{i-2}{N}}^{\frac{i-1}{N}}  \partial_{\theta}^{\frac{1}{N}}y(\theta) d \theta ,
	\end{align*}
	where~$\partial_{\theta}^{\frac{1}{N}}$ is the forward difference quotient
	\begin{align*}
	\partial_{\theta}^{\frac{1}{N}}y(\theta) = N \left( y \left( \theta + \frac{1}{N} \right) - y \left( \theta\right) \right).
	\end{align*}
	Hence, we have
	\begin{align} \label{e_rep_a_inner_product}
	\langle \tilde y , \bar A y \rangle_{L^2} &= \frac{1}{N}   \left( N P^t \tilde y \right) \cdot A \left( N P^t y \right) = \int_0^1 \partial_{\theta}^{\frac{1}{N}} \tilde y \ Q \partial_{\theta}^{\frac{1}{N}}y \ d \theta,
	\end{align}
	where~$Q : L^2(\mathbb{T}) \to \bar X_N$ is the~$L^2$-orthogonal projection onto the piecewise constant functions
	\begin{align*}
	\bar X_N= \left\{ f: \mathbb{T} \to \mathbb{R} \ : \  f \mbox{ is constant on } \left(\frac{i-1}{N}, \frac{i}{N} \right) , \ i=1 \ldots , N  \right\}.
	\end{align*}
	Using~\eqref{e_rep_a_inner_product} we get that
	\begin{align*}
	 \langle \tilde y,  \bar A y \rangle_{L^2} -  \langle \tilde y , y \rangle_{H^1} &= \int_0^1  \partial_{\theta}^{\frac{1}{N}} \tilde y Q \partial_{\theta}^{\frac{1}{N}} y  d \theta - \int_0^1 \partial_{\theta} \tilde y  \ \partial_{\theta} y  d \theta \\
	& = \int_0^1 \left( \partial_{\theta}^{\frac{1}{N}} \tilde y - \partial_{\theta} \tilde y \right) Q \partial_{\theta}^{\frac{1}{N}} y d \theta \\
	& \quad + \int_{0}^1 \partial_{\theta} \tilde y Q \left( \partial_{\theta}^{\frac{1}{N}} y - \partial_{\theta} y \right) d \theta \\
	& \quad + \int_0^1 \partial_{\theta} \tilde y \left(Q - \Id  \right) \partial_{\theta} y d \theta.
	\end{align*}
	
	The last identity yields the estimate
	\begin{align}
	\left| \langle \tilde y,  \bar A y \rangle_{L^2} -  \langle \tilde y , y \rangle_{H^1} \right| &  \leq \left( \int_0^1 \left| \partial_{\theta}^{\frac{1}{N}} \tilde y - \partial_{\theta} \tilde y \right|^2 d \theta \int_0^1 \left| \partial_{\theta}^{\frac{1}{N}} y  \right|^2 d \theta \right)^{\frac{1}{2}} \\
	& \qquad + \left( \int_0^1 \left| \partial_{\theta} \tilde y \right|^2 d \theta \int_0^1 \left| \partial_{\theta}^{\frac{1}{N}} y - \partial_{\theta} y \right|^2 d \theta \right)^{\frac{1}{2}} \\
	& \qquad + \left( \int_0^1 \left| \partial_{\theta} \tilde y \right|^2 d \theta \int_0^1 \left| \left( Q - \Id \right) \partial_{\theta} y \right|^2 d \theta \right)^{\frac{1}{2}} . \label{e_comparison_bar_a_H_1_norm_core_estimate}
	\end{align}
	We will estimate the integrals on the right hand side. Observe that
	\begin{align}
	\partial_{\theta}^{\frac{1}{N}} y(\theta)  = N \int_\theta^{\theta + \frac{1}{N}} \partial_\theta y(s) ds 
	 = \partial_{\theta} y \left( \tilde s\right) \mbox{ for } \tilde s \in \left(\theta, \theta+ \frac{1}{N} \right). \label{e_partial_N_integral_formula}
	\end{align}
	Using the first identity in \eqref{e_partial_N_integral_formula} and Hoelder's inequality, we have
	\begin{align}
	\left| \partial_{\theta}^{\frac{1}{N}} y(\theta) \right|^2 & \leq  N  \int_{\theta}^{\theta + \frac{1}{N}} \left| \partial_{\theta} y \left(s\right)\right|^2 ds.
	\end{align}
	Using periodicity, this implies
	\begin{align}\label{e_A_inner_product_first_estimate}
	\int_0^1 \left| \partial_{\theta}^{\frac{1}{N}} y \right|^2 d \theta   \leq  N \int_{0}^1 \int_{\theta}^{\theta + \frac{1}{N}} \left| \partial_{\theta} y \left(s\right)\right|^2 ds d \theta  = \int_0^1 \left| \partial_{\theta} y \left(s\right)\right|^2 ds.
	\end{align}
	Next, using the second identity in ~\eqref{e_partial_N_integral_formula} we have
	\begin{align}
	\left| \partial_{\theta}^{\frac{1}{N}} y - \partial_{\theta} y \right|  = \left| \partial_{\theta}y (\tilde s)- \partial_{\theta} y(\theta) \right|  
	=   \left| \int_{\theta}^{\tilde s} \partial_{\theta}^2 y (s) ds \right| \leq    \int_{\theta}^{\theta + \frac{1}{N}} |\partial_{\theta}^2 y (s)| ds.
	\end{align}
	Integrating this inequality yields
	\begin{align}
	 \int_0^1 \left| \partial_{\theta}^{\frac{1}{N}} y - \partial_{\theta} y \right|^2 d \theta 
	& \leq \int_0^1  \left( \int_{\theta}^{\theta + \frac{1}{N}} \left|\partial_{\theta}^2 y (s) \right| ds \right)^2 d \theta \\
	&\leq \frac{1}{N} \int_0^1   \int_{\theta}^{\theta + \frac{1}{N}} \left|\partial_{\theta}^2 y (s) \right|^2 ds  d \theta  \\
	&= \frac{1}{N^2} \int_0^1   \left|\partial_{\theta}^2 y (s) \right|^2 ds. \label{e_A_inner_product_second_estimate}
	\end{align} 	
	Finally, Poincar\'e inequality on the interval $(\frac{i-1}{N}, \frac{i}{N})$ applied to $\partial_\theta y$ gives
	\begin{align}
	\label{e_A_inner_product_third_estimate}
	\int_0^1 \left| \left( Q - \Id \right) \partial_{\theta} y\right|^2 d \theta \lesssim  \frac{1}{N^2} \int_0^1 \left|  \partial_{\theta}^2 y\right|^2 d \theta.
	\end{align}
	Applying the estimates~\eqref{e_A_inner_product_first_estimate},~\eqref{e_A_inner_product_second_estimate} and~\eqref{e_A_inner_product_third_estimate} to the right hand side of~\eqref{e_comparison_bar_a_H_1_norm_core_estimate} yields the desired estimate~\eqref{e_closeness_H_1_and_A_norm_concrete_weak}.
\end{proof}

\begin{proof}[Proof of Lemma~\ref{operator}]
	We will work with the B-spline basis of $Y_M$. It is defined as
	the family of quadratic spline functions on $\mathbb{T}^1$ given by 
	\begin{equation} \label{bspline}
	B_j(\theta) =
	\begin{cases}
	\frac{M^2}{2} (\theta-\frac {j-2}{M})^2 & \text{for}\, \theta\in [\frac{j-2}{M},\frac{j-1}{M}) \\
	\frac{3}{4} \,-\,M^2(\theta - \frac{2j-1}{2M})^2 & \text{for}\, \theta\in [\frac{j-1}{M},\frac{j}{M}) \\
	\frac{M^2}{2} (\theta-\frac {j+1}{M})^2 & \text{for}\, \theta \in [\frac{j}{M},\frac{j+1}{M}) \\
	0 & \text{else}.
	\end{cases}
	\end{equation} 
	We will deduce the following estimate which directly yields~\eqref{opestimate}.
	\begin{equation}\label{e_opestimate_2}
	\sigma \, |ANP^ty |_{L^2} \leq |P ANP^ty |_{L^2}. 
	\end{equation}
	
	We begin by giving an explicit formula of $z = ANP^t y$ in terms of $y$. By definition, in each of the intervals $[\frac{j-1}{M},\frac{j}{M})$, $y\in Y_M$ is of the form $$y(\theta) =\alpha_j \theta^2 + \beta_j \theta +\gamma_j$$
	for some coefficients $\alpha_j,\beta_j,\gamma_j \in \mathbb{R}$. Then for all $j=1,...,M$ and
	$i=(j-1)K+1,...,jK$
	\begin{align}
	(NP^{t}y)_i   \overset{\eqref{e_adjoint_NP^t_explicit}}{=}  N\int_{\frac{i-1}{N}}^{\frac{i}{N}} \,y\,d\theta 
	 =\frac{\alpha_j}{N^2} \left(i^2-i+\frac{1}{3}\right) \,+\, \frac{\beta_j}{N} \left(i-\frac{1}{2}\right) \,+\,\gamma_j.
	\end{align}
	Consequently, recalling the definition of $A$ in \eqref{e_def_A}, we find for all entries of $z = ANP^ty$ away from
	the block boundaries, that is $i=(j-1)K+2,...,jK-1$:
	\begin{equation}
	z_i = (ANP^ty)_i =-2\alpha_j. \label{zinner}
	\end{equation}
	The fact that $y$ belongs to $C^1(\mathbb{T})$ yields the conditions:
	\begin{equation}
	N(\beta_{j}-\beta_{j+1})\,=\,2 jK (\alpha_{j+1}-\alpha_j), \label{C1}
	\end{equation}
	\begin{equation*}
	N^2(\gamma_j-\gamma_{j+1}) \,=\, N(\beta_{j+1}-\beta_j) jK \,+\, (\alpha_{j+1}-\alpha_j)(jK)^2
	\,\stackrel{\eqref{C1}}{=}\, - (\alpha_{j+1}-\alpha_j)(jK)^2. \label{C0}
	\end{equation*}
	A straightforward computation then shows for the values of $z$ at the boundaries of the blocks:
	\begin{align}
	z_{jK} &= -2\alpha_j + \frac{1}{3}(\alpha_j-\alpha_{j+1})
	\label{boundary1} \\
	z_{(j-1)K+1} &= -2\alpha_j + \frac{1}{3}(\alpha_j-\alpha_{j-1}). 
	\label{boundary2}
	\end{align}
	So, the function $z$ is almost piecewise constant on the mesh $\{\frac{m}{M}\}_{m=1,...,M}$ and we define a spline interpolation
	for $z$ in the simplest way one can imagine via
	\begin{equation*}
	I(z) := \sum_{j=1}^{M} -2\alpha_j\,B_j.
	\end{equation*}
	We will show that there is a universal constant $\sigma>0$, such that:
	\begin{equation}
	\frac{\langle z,I(z)\rangle_{L^2}}{|z|_{L^2} |I(z)|_{L^2}} \geq \sigma, \label{opest}
	\end{equation}
	which implies our claim \eqref{e_opestimate_2} by the following simple calculation:
	\begin{equation*}
	|Pz|_{L^2}\,|I(z)|_{L^2} \geq \langle Pz,I(z)\rangle_{L^2} = \langle z,I(z) \rangle_{L^2} \geq \sigma |z|_{L^2}|I(z)|_{L^2}.
	\end{equation*}
	\newline
	Argument for \eqref{opest}: From \eqref{zinner}, \eqref{boundary1}
	and \eqref{boundary2}, it follows that there is $C<\infty$ (depending on $K$ and going to $4$ as $K\rightarrow\infty$), such that:
	\begin{eqnarray*}
		|z|^2_{L^2} = \frac{1}{N} |z|^2  \leq C\frac{K}{N}\sum_{j=1}^M \alpha_j^2.
	\end{eqnarray*}
	Next, we note that
	\begin{equation}
	\langle B_j,B_k \rangle_{Y} =
	\begin{cases}
	\frac{1}{M} \frac{11}{20} & \text{for} \,j=k \\
	\frac{1}{M} \frac{13}{60} & \text{for} \,|j-k|=1 \\
	\frac{1}{M} \frac{1}{120} & \text{for} \,|j-k|=2 \\
	0 & \text{else}. \label{BjBk}
	\end{cases}
	\end{equation}
	This implies:
	\begin{eqnarray*}
		|I(z)|^2_Y = \langle \sum_{j=1}^{M} -2\alpha_j B_j,\sum_{k=1}^M -2\alpha_k B_k \rangle_Y = \frac{4}{M}\langle \alpha,D \alpha\rangle_{\mathbb{R}^M},
	\end{eqnarray*}
	where $D$ is a symmetric matrix with $\|D\|_2 \leq 1$. Hence,
	\begin{equation*}
	|I(z)|^2_{L^2} \leq \frac{4}{M}\sum_{j=1}^M \alpha_j^2.
	\end{equation*}
	Finally, we compute $\langle z,I(z) \rangle_{L^2}$:
	\begin{eqnarray}
	\lefteqn{\langle z,I(z) \rangle_{L^2} = \langle z,\sum_{j=1}^{M} -2\alpha_j B_j\rangle_{L^2}
		= \sum_{j=1}^{M} -2\alpha_j \,\langle z,B_j\rangle_{L^2}}\nonumber\\
	&=& \sum_{j=1}^{M} -2\alpha_j \left(-2\alpha_{j-1} \int_{\frac{j-2}{M}}^{\frac{j-1}{M}} B_j \,d\theta
	\,-\, 2\alpha_j \int_{\frac{j-1}{M}}^{\frac{j}{M}} B_j \,d\theta \,-\, 2\alpha_{j+1} \int_{\frac{j}{M}}^{\frac{j+1}{M}} B_j \,d\theta\right)\nonumber\\
	&& +\, \sum_{j=1}^{M} -2\alpha_j \,\left(\sum_{k=j-1}^{j+1}\frac{1}{3} (\alpha_k-\alpha_{k-1})
	\int_{\frac{(k-1)K}{N}}^{\frac{(k-1)K+1}{N}} B_j \,d\theta\right)\nonumber\\
	&& +\, \sum_{j=1}^{M} -2\alpha_j \,\left(\sum_{k=j-1}^{j+1}\frac{1}{3} (\alpha_k-\alpha_{k+1})
	\int_{\frac{kK-1}{N}}^{\frac{kK}{N}} B_j \,d\theta\right)\nonumber\\
	&=& \sum_{j=1}^{M} \left(\frac{2}{3M} \alpha_{j-1}\alpha_j + \frac{8}{3M} \alpha_j^2 + \frac{2}{3M} \alpha_{j+1}\alpha_j \right)
	+ O\left(\frac{1}{N}\right) |\alpha |_{\mathbb{R}^M}^2.
	\end{eqnarray}
	The strict diagonal dominance of the symmetric matrix $E_{jk}= \frac{2}{3} \delta_{j-1,k} + \frac{8}{3} \delta_{j,k}+ \frac{2}{3}\delta_{j+1,k}$ thus
	implies that there is $c > 0$ (dependent on $K$ and going to $\frac{4}{3}$ as $K \rightarrow \infty$), such that:
	\begin{equation*}
	\langle z,I(z) \rangle_{L^2(\mathbb{T})} \geq \frac{c}{M} \sum_{j=1}^M \alpha_j^2.
	\end{equation*}
	Putting everything together, we arrive at \eqref{opest}:
	\begin{equation*}
	\frac{\langle z,I(z)\rangle_{L^2}^2}{|z |_{L^2}^2\, |I(z)|_{L^2}^2} \geq \frac{c^2}{M^2} \frac{M}{4}\frac{N}{CK} = \frac{c^2}{4C}.
	\end{equation*}
	
\end{proof}

\begin{proof}[Proof of Lemma~\ref{s_aux_estimates_bar_A_to_partial_2}]
	
	Argument for ~\eqref{e_norm_bar_A_to_partial_2}: We will show the following estimate which directly yields ~\eqref{e_norm_bar_A_to_partial_2} in Lemma~\ref{s_aux_estimates_bar_A_to_partial_2}:
	\begin{equation}\label{e_norm_bar_A_to_partial_2_alternate}
	\, |-\partial_\theta^2 y |_{L^2} \lesssim |P ANP^ty |_{L^2}. 
	\end{equation}
	This will be an easy consequence of the estimate \eqref{e_opestimate_2} and its proof above. Observe that the function $-\partial_\theta^2 y$ is piecewise constant, taking the value $-2\alpha_j$ on the interval $[\frac{j-1}{M}, \frac{j}{M})$. Compare this with the explicit description of the function $ANP^t y$ given by \eqref{zinner}, \eqref{boundary1}, \eqref{boundary2}, we find
	\begin{equation}
	(-\partial_\theta^2 y)(\theta) - ANP^t y(\theta) = 
	\begin{cases}
	\frac{1}{3}(\alpha_{j+1} - \alpha_j) & \mbox{for} \,  \theta \in [\frac{jK-1}{N}, \frac{jK}{N}) \\
	\frac{1}{3}(\alpha_{j-1} - \alpha_j) & \mbox{for} \, \theta \in [\frac{(j-1)K}{N},  \frac{(j-1)K+1}{N}) \\
	0 & \mbox{otherwise}
	\end{cases}
	\end{equation}
	Then it's easy to see that
	\begin{align}
	|(-\partial_\theta^2 y) - ANP^t y|_{L^2}^2 &\leq \sum_{j=1}^M \frac{1}{N} |\alpha_j|^2
	= \frac{1}{K} \frac{1}{M} \sum_{j=1}^M |\alpha_j|^2  
	\leq \frac{1}{4K} |-\partial_\theta^2 y|_{L^2}^2.
	\end{align}
	It follows that
	\begin{align}
	|-\partial_\theta^2 y|_{L^2} \leq \left(1+O\left(\frac{1}{K^{\frac{1}{2}}}\right)\right) |ANP^t y|_{L^2}, 
	\end{align}
	which, combined with the estimate \eqref{e_opestimate_2}, implies \eqref{e_norm_bar_A_to_partial_2_alternate}. \\
	
	Argument for ~\eqref{e_inner_bar_A_to_partial_2}: we apply Lemma~\ref{p_closeness_H_1_and_A_norm_concrete} to $\bar A^{-1} y, \tilde{y}$:
	\begin{align}
	|\langle \tilde{y},  y\rangle_{L^2} - \langle \tilde{y}, -\partial_\theta^2 \bar A^{-1} y \rangle_{L^2}| 
	&= |\langle \tilde{y}, \bar A (\bar A^{-1} y) \rangle_{L^2} - \langle \tilde{y}, \bar A^{-1} y \rangle_{H^1}| \\
	&\overset{\eqref{e_closeness_H_1_and_A_norm_concrete_weak}}{\lesssim} \frac{1}{K} |\bar A^{-1} y|_{H^1} |\tilde{y}|_{H^1}
	\end{align}
This is almost ~\eqref{e_inner_bar_A_to_partial_2}, and the proof is completed by equivalence of norms:
\begin{equation}|\bar A^{-1} y|_{H^1} \overset{\eqref{e_equivalence_H_norm_and_bar_A}}{\lesssim} |\bar A^{-1} y|_{\bar A} 
=  |y|_{\bar A^{-1}}
\overset{\eqref{e_equivalence_neg_H_norm_and_neg_bar_A}}{\lesssim} |y|_{H^{-1}}. 
\end{equation}
\end{proof}

For the proof of Lemma~\ref{Poin} we need one auxiliary statement. It is a well-known discrete analogue of the Poincar\'e inequality for functions with mean zero.
\begin{lemma}[Discrete Poincar\'e inequality]\label{p_discrete_poincare}
	There is a universal constant $0<C<\infty$ such that for all $x\in \mathbb{R}^N$ with $\sum_{n=1}^N x_n =0$
	it holds
	\begin{equation}
	\sum_{n=1}^N x_n^2 \leq CN^2 \sum_{n=2}^{N} (x_n-x_{n-1})^2. \label{discretepoincare}
	\end{equation}
\end{lemma}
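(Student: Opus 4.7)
The plan is to use the mean-zero condition to express each $x_n$ as an average of differences $x_n - x_m$, then telescope each such difference and apply Cauchy--Schwarz carefully so that the accumulated powers of $N$ come out to $N^2$ and not $N^3$.

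First, since $\sum_{m=1}^N x_m = 0$, for every $n \in \{1,\ldots,N\}$ I can write
\begin{equation}
x_n = x_n - \tfrac{1}{N}\sum_{m=1}^N x_m = \tfrac{1}{N}\sum_{m=1}^N (x_n - x_m).
\end{equation}
Next, for $m < n$, telescoping gives $x_n - x_m = \sum_{k=m+1}^n (x_k - x_{k-1})$ (and analogously for $m>n$), so by Cauchy--Schwarz
\begin{equation}
(x_n - x_m)^2 \leq |n-m| \sum_{k=2}^N (x_k - x_{k-1})^2.
\end{equation}

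Then I apply Cauchy--Schwarz to the averaging identity:
\begin{equation}
x_n^2 = \Bigl(\tfrac{1}{N}\sum_{m=1}^N (x_n - x_m)\Bigr)^2 \leq \tfrac{1}{N}\sum_{m=1}^N (x_n - x_m)^2 \leq \tfrac{1}{N}\Bigl(\sum_{m=1}^N |n-m|\Bigr)\sum_{k=2}^N (x_k - x_{k-1})^2.
\end{equation}
Since $\sum_{m=1}^N |n-m| \leq N^2$, this gives $x_n^2 \leq N \sum_{k=2}^N (x_k-x_{k-1})^2$. Summing over $n$ yields the desired inequality with $C=1$ (in fact a slightly sharper constant if one keeps $\sum_m |n-m| \leq N^2/2$).

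The argument is essentially the discrete analogue of the fundamental theorem of calculus plus Poincaré via averaging, and there is no genuine obstacle; the only subtlety is the ordering of Cauchy--Schwarz applications, since the naive bound $(x_n - x_m)^2 \leq N \sum_k (x_k-x_{k-1})^2$ followed by a direct sum would produce a spurious factor of $N$. Using the mean-zero condition through the averaging identity $x_n = \tfrac{1}{N}\sum_m (x_n - x_m)$ together with the sharper bound $\sum_m |n-m| \lesssim N^2$ is what saves the correct $N^2$ scaling.
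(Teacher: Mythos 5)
Your proof is correct. The paper itself states Lemma~\ref{p_discrete_poincare} without proof, citing it as a well-known discrete Poincar\'e inequality, so there is no in-paper argument to compare against; your averaging argument is the standard one and does the job. Concretely: the identity $x_n = \tfrac{1}{N}\sum_{m=1}^N(x_n-x_m)$ uses the mean-zero condition correctly, the telescoping plus Cauchy--Schwarz bound $(x_n-x_m)^2 \le |n-m|\sum_{k=2}^N(x_k-x_{k-1})^2$ is valid for the non-periodic differences appearing in \eqref{discretepoincare}, and combining with Jensen and $\sum_{m}|n-m|\le N^2$ and summing over $n$ indeed yields \eqref{discretepoincare} with $C=1$ (and $C=\tfrac12$ if one keeps $\sum_m|n-m|\le N^2/2$). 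One small quibble with your closing remark: the ``naive'' ordering is not actually dangerous. Even the crude bound $(x_n-x_m)^2\le N\sum_k(x_k-x_{k-1})^2$ inserted into $x_n^2\le \tfrac1N\sum_m(x_n-x_m)^2$ gives $x_n^2\le N\sum_k(x_k-x_{k-1})^2$ and hence the same $N^2$ scaling after summing over $n$; the prefactor $\tfrac1N$ coming from the mean-zero averaging already absorbs one power of $N$, so the sharper bound on $\sum_m|n-m|$ only improves the constant, not the scaling. This does not affect the validity of your proof.
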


\begin{proof}[Proof of Lemma~\ref{Poin}]
	 Let us consider the following spline interpolation of an element ~$x \in X_N$:
	\begin{equation*}
	I(x) := NP^t \left[\sum_{j=1}^M \left(\frac{1}{3K} \sum_{i=(j-2)K+1}^{(j+1)K} x_i\right) B_j\right],
	\end{equation*}
	where~$B_j$ is given by~\eqref{bspline}. Since $NP^t$ is $L^2$-orthogonal projection onto $X_N$, it holds
	\begin{equation*}
	\sum_{n=1}^N (x_n -(NP^ty)_n)^2 \,\leq\, \sum_{n=1}^N (x_n -I(x)_n)^2.
	\end{equation*}
	Using~$\sum_{j=1}^M B_j = 1$ and Young's inequality we compute
	\begin{eqnarray}
	\lefteqn{\sum_{n=(m-1)K+1}^{mK} (x_n - I(x)_n)^2}\nonumber\\
	&\leq& 3 \sum_{n=(m-1)K+1}^{mK} \left(x_n - \left(\frac{1}{3K} \sum_{i=(m-3)K+1}^{mK} x_i\right)\right)^2 (NP^tB_{m-1})_n^2\nonumber\\
	&&+ \;\; 3 \sum_{n=(m-1)K+1}^{mK} \left(x_n - \left(\frac{1}{3K} \sum_{i=(m-2)K+1}^{(m+1)K} x_i\right)\right)^2  (NP^tB_{m})_n^2\nonumber\\
	&&+ \;\; 3 \sum_{n=(m-1)K+1}^{mK} \left(x_n - \left(\frac{1}{3K} \sum_{i=(m-1)K+1}^{(m+2)K} x_i\right)\right)^2  (NP^tB_{m+1})_n^2\nonumber\\
	&\leq& 3 \sum_{n=(m-3)K+1}^{mK} \left(x_n - \left(\frac{1}{3K} \sum_{i=(m-3)K+1}^{mK} x_i\right)\right)^2 \nonumber\\
	&&+ \;\; 3 \sum_{n=(m-2)K+1}^{(m+1)K} \left(x_n - \left(\frac{1}{3K} \sum_{i=(m-2)K+1}^{(m+1)K} x_i\right)\right)^2  \nonumber\\
	&&+ \;\; 3 \sum_{n=(m-1)K+1}^{(m+2)K} \left(x_n - \left(\frac{1}{3K} \sum_{i=(m-1)K+1}^{(m+2)K} x_i\right)\right)^2 \nonumber\\
	&\stackrel{\eqref{discretepoincare}}{\leq}& 3C(3K)^2 \left(\sum_{n=(m-3)K+2}^{mK} (x_n - x_{n-1})^2\right)\nonumber\\
	&&+ \;\; 3C(3K)^2 \left(\sum_{n=(m-2)K+2}^{(m+1)K} (x_n - x_{n-1})^2\right)\nonumber\\
	&&+ \;\; 3C(3K)^2 \left(\sum_{n=(m-1)K+2}^{(m+2)K} (x_n - x_{n-1})^2\right). \label{poinblock}
	\end{eqnarray}
	Observing that
	\begin{equation*}
	\langle x,Ax\rangle \,=\, N^2 \sum_{n=1}^N (x_n-x_{n-1})^2,
	\end{equation*}
	we add the inequalities \eqref{poinblock} for all $m\in [M]$ and arrive at \eqref{penalize} with $\gamma=3^5\,C$,
	where $C$ is the constant from \eqref{discretepoincare}.
\end{proof}

\section*{Acknowledgment}
This research has been partially supported by NSF grant DMS-1407558. Georg Menz and Tianqi Wu want to thank the Max-Planck Institute for Mathematics in the Sciences, Leipzig, Germany, for financial support.

\bibliographystyle{alpha}

\bibliography{bib}

\end{document}